\newcommand{\R}{\mathbb{R}}
\newcommand{\Z}{\mathbb{Z}}
\newcommand{\N}{\mathbb{N}}
\DeclareMathOperator{\Var}{Var}
\DeclareMathOperator{\val}{val}
\DeclareMathOperator{\Prob}{\Pr}
\DeclareMathOperator{\Exp}{\mathbb{E}}
\DeclareMathOperator{\Cov}{Cov}
\newcommand{\E}{\Exp}
\newcommand{\T}{\mathsf{T}}
\newcommand{\finalHpUpperBoundConstant}{10^{15}}
\newcommand{\finalHpUpperBound}{\finalHpUpperBoundConstant  t\cdot \frac{m^{2.5}(\log n + m)^2}{n}}
\renewcommand{\phi}{\varphi}
\newcommand{\set}[1]{\{{#1}\}}
\newcommand{\floor}[1]{\lfloor{#1}\rfloor}
\newcommand{\ceil}[1]{\lceil{#1}\rceil}
\newcommand{\len}[1]{\lvert{#1}\rvert}
\newcommand{\lenfit}[1]{\left\lvert{#1}\right\rvert}
\newcommand{\length}[1]{\lVert{#1}\rVert}
\begin{document}

\title{On the Integrality Gap of Binary Integer Programs with Gaussian Data }

\author{Sander Borst$^\star$ \and
Daniel Dadush\thanks{This project has received funding from the European Research Council (ERC) under the European Union's Horizon 2020 research and innovation programme (grant agreement QIP--805241)} \and
Sophie Huiberts \and Samarth Tiwari$^\star$}
\authorrunning{Borst et al.}

\institute{Centrum Wiskunde \& Informatica (CWI), Amsterdam, The Netherlands
\\ \email{\{sander.borst,dadush,s.huiberts,samarth.tiwari\}@cwi.nl}}
\maketitle              
\begin{abstract}
For a binary integer program (IP) $\max c^\T x, Ax \leq b, x \in \{0,1\}^n$,
where $A \in \R^{m \times n}$ and $c \in \R^n$ have independent Gaussian
entries and the right-hand side $b \in \R^m$ satisfies that its negative coordinates have $\ell_2$ norm at most $n/10$, we prove that the gap between the value of the linear
programming relaxation and the IP is upper bounded by
$\operatorname{poly}(m)(\log n)^2 / n$ with probability at least
$1-2/n^7-2^{-\operatorname{poly}(m)}$. Our results give a Gaussian analogue of the
classical integrality gap result of Dyer and Frieze (Math. of O.R., 1989) in
the case of random packing IPs. In constrast to the packing case, our
integrality gap depends only polynomially on $m$ instead of exponentially.
Building upon recent breakthrough work of Dey, Dubey and Molinaro (SODA,
2021), we show that the integrality gap implies that branch-and-bound requires $n^{\operatorname{poly}(m)}$ time on random Gaussian IPs with
good probability, which is polynomial when the number of constraints $m$ is
fixed. We derive this result via a novel meta-theorem, which relates the size
of branch-and-bound trees and the integrality gap for random
\emph{logconcave} IPs. 

\end{abstract}

\section{Introduction}

Consider the following linear program with $n$ variables and $m$ constraints
\begin{align*}
	\val_\mathsf{LP}(A,b,c) := \max_x         & \ \val(x)=c^\T x \\
        \text{s.t. } & Ax\leq b             \tag{Primal LP}\label{primal-lp}\\
                     & x \in [0,1]^n
\end{align*}
Let $\val_\mathsf{IP}(A,b,c)$ be the value of the same optimization problem
with the additional restriction that $x$ is integral, i.e., $x\in \{0,1\}^n$.
Now we define the integrality gap to be the quantity
$\mathsf{IPGAP}(A,b,c):=\val_\mathsf{LP}(A,b,c)-\val_\mathsf{IP}(A,b,c)$.    

The integrality gap of
integer linear programs forms an important measure for the complexity of
solving said problem in a number of works on the average-case complexity of
integer programming
\cite{beier_core_2004,dey_branch-and-bound_2021,dyer_gap_1992,dyer_probabilistic_1989,goldberg_finding_1984,lueker_average_1982}.

So far, probabilistic analyses of the integrality gap have focussed
on $0$--$1$ packing IPs and the generalized assignment problem.
In particular,
the entries of $A \in \R^{m \times n}, b \in \R^m, c \in \R^n$
in these problems are all non-negative, and the entries of $b$
were assumed to scale linearly with $n$.

In this paper, we analyze the integrality gap of \eqref{primal-lp}
under the assumption that the entries
of $A$ and $c$ are all independent Gaussian $\mathcal{N}(0,1)$ distributed,
and that the negative part of $b$ is small: $\|b^-\|_2 \leq n/10$.

We prove that, with high probability, the integrality gap $\mathsf{IPGAP}(A,b,c)$ is
small, i.e., \eqref{primal-lp} admits a solution $x \in \{0,1\}^n$ with value
close to the optimum.

\begin{theorem}\label{thm:ipgap_main_result}
There exists an absolute constant $C \geq 200$, such that, for $m \geq 1$, $n
\geq Cm^{4.5}$, $b \in \R^m$ with $\|b^-\|_2 \leq n/10$, if
$A$ and $c$ have i.i.d.~$\mathcal{N}(0,1)$ entries, then
\begin{align*}
    \Prob\left(\mathsf{IPGAP}(A,b,c) \geq \finalHpUpperBoundConstant \cdot t\cdot \frac{m^{2.5} (m + \log n)^2}{n}\right)\leq 4\cdot \left(1-\frac{1}{25}\right)^{t}+n^{-7},
\end{align*}
for all $1 \leq t \leq \frac{n}{Cm^{2.5}(m+\log n)^2}$.
\end{theorem}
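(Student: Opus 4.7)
The plan is to start from an LP-optimal basic solution $x^*$, which has at most $m$ fractional coordinates, and construct a nearby integer feasible $\hat x$ whose objective value is within the stated distance of $\val_{\mathsf{LP}}(A,b,c)$. Let $F$ denote the fractional support of $x^*$, so $|F| \leq m$, and let $\tilde x \in \{0,1\}^n$ be an arbitrary rounding of the fractional coordinates. The defect $d := x^* - \tilde x$ is supported on $F$ with $\|d\|_\infty \leq 1$, so both the feasibility violation $A\tilde x - b$ and the value loss $c^\T d$ are built out of $O(m)$ Gaussian coordinates; the resulting initial gap is only $O(m\sqrt{\log n})$, but still far from the target bound.

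To close the remaining gap, I would repair $\tilde x$ by a balanced swap of binary coordinates outside $F$: pick sets $S \subseteq \{j \notin F : \tilde x_j = 1\}$ and $T \subseteq \{j \notin F : \tilde x_j = 0\}$ and form $\hat x := \tilde x - \mathbf{1}_S + \mathbf{1}_T$. Writing $v := \mathbf{1}_T - \mathbf{1}_S$, we want the swap to hit the $(m+1)$-dimensional target $(Av,\, c^\T v) \approx (Ad,\, c^\T d)$. Since the map $v \mapsto (Av, c^\T v)$ is linear with Gaussian coefficients and, after conditioning on the LP basis and the values $(Ax^*, c^\T x^*)$, there is ample residual Gaussian randomness in the non-basic columns, a suitably chosen random swap lies in a box of side $\finalHpUpperBound$ around the target with probability at least $1/25$. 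Running $t$ independent trials and taking the best output produces the stated tail: $(1-1/25)^t$ accounts for the trials, while the $n^{-7}$ term absorbs the Gaussian structural events (that $x^*$ is indeed $m$-fractional, that $\|c\|_\infty = O(\sqrt{\log n})$, that non-basic submatrices of $A$ are well-conditioned, etc.).

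The first obstacle is independence: $x^*$ depends on all of $(A,c)$, so the Gaussians used to perform a swap are not a priori independent of the target $(Ad, c^\T d)$. I would handle this by conditioning only on the identity of the basis and on the pair $(Ax^*, c^\T x^*)$; conditionally, the non-basic columns of $A$ and the corresponding entries of $c$ remain Gaussian and supply the fresh randomness needed for the swap. The second and deeper obstacle is sharpness: a one-shot Gaussian concentration yields scale $\sqrt m$, not $1/n$, per coordinate, so the small-ball estimate has to exploit the exponential combinatorial richness of the swap family rather than the randomness of a single random swap. Proving a Berry--Esseen-style local density bound in $\R^{m+1}$ for $\{-1,0,+1\}$-valued swap vectors, yielding density of order $\mathrm{poly}(m,\log n)/n$ per $(m+1)$-dimensional target, is the main technical content of the argument, and is where the $m^{2.5}(m+\log n)^2$ factor in the gap bound is ultimately paid.
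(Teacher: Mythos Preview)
Your high-level outline (round $x^*$, repair with columns outside the fractional support, exploit residual randomness, repeat $t$ independent trials) matches the paper's strategy, but the proposal contains two genuine gaps that would prevent it from going through as stated.

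First, your conditioning claim is not correct. After conditioning on the optimal basis, the non-basic columns of $W=\begin{bmatrix}c^\T\\A\end{bmatrix}$ are \emph{not} Gaussian: a column with $x^*_i=0$ is $\mathcal{N}(0,I_{m+1})$ conditioned on the half-space $c_i-A_{\cdot,i}^\T u^*\leq 0$, and a column with $x^*_i=1$ is conditioned on the opposite half-space \emph{and}, through the non-tight primal constraints $(Ax^*)_j\le b_j$, is coupled to the other $N_1$-columns. The paper makes precisely this point (see the ``Future directions'' paragraph): flipping $1$'s to $0$'s is an open problem because the $N_1$-columns fail to be conditionally independent. So your two-sided swap using both $S$ and $T$ cannot be justified by the conditioning you describe; the paper uses only $T\subseteq N_0$ and conditions on all of $W_{\cdot,[n]\setminus N_0}$, after which the $N_0$-columns are i.i.d.\ half-space-conditioned Gaussians (\Cref{lem:independence}).

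Second, even restricting to $N_0$, your plan to hit the full $(m{+}1)$-dimensional target $(Av,c^\T v)\approx(Ad,c^\T d)$ with a local-CLT argument will not deliver the stated bound. In the direction $(1,-u^*)/\|(1,-u^*)\|_2$, the conditioned columns are supported on a half-line (reduced cost $\le 0$), so there is no two-sided local limit theorem in that coordinate and you cannot place the sum in an interval of width $\mathrm{poly}(m,\log n)/n$ around an arbitrary target. The paper decouples the two goals: it first \emph{filters} to columns with reduced cost in $[0,t\Delta]$ (so the objective loss is automatically $\le t\Delta k$), then rotates so the remaining $m$ coordinates are genuinely independent mean-zero variables, and finally applies a discrepancy lemma to those $m$ coordinates only. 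The ``Berry--Esseen-style local density bound'' you allude to is exactly this discrepancy lemma (\Cref{lemma:disc}), and getting its success probability to be an absolute constant rather than $2^{-\Theta(m)}$ is the paper's main technical contribution; without the filtering/rotation preprocessing you cannot even set up its hypotheses.
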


In the previous probabilistic analyses by
\cite{dyer_gap_1992,dyer_probabilistic_1989,lueker_average_1982}, it is
assumed that $b = \beta n$ for fixed $\beta \in (0,1/2)^m$
and the entries of $(A, c)$ are independently distributed uniformly in the
interval $[0,1]$. Those works prove a similar bound as above, except that in
their results the dependence on $m$ is exponential instead of polynomial.
Namely, for $\beta_{\min} := \min_{i \in [n]} \beta_i$, they require $n \geq (1/\beta_{\min})^m \geq 2^{m}$ and the integrality gap scales like
$O(1/\beta_{\min})^m \log^2 n / n$. We note that the integrality gap in
\Cref{thm:ipgap_main_result} does not depend on the ``shape'' of $b$ (other
than requiring $\|b^-\|_2 \leq n/10$). We give a high-level overview of the proof
of~\Cref{thm:ipgap_main_result} in subsection~\ref{sec:ipgap-overview},
describing the similarities and differences with the analysis of Dyer and
Frieze~\cite{dyer_probabilistic_1989}

Building on breakthrough work of Dey, Dubey and
Molinaro~\cite{dey_branch-and-bound_2021}, we show that the integrality gap
above also implies that branch-and-bound applied to the above IP produces a
tree of size at most $n^{\operatorname{poly}(m)}$ with good
probability. For this purpose, we give a novel meta-theorem relating the
integrality gap and the complexity of branch-and-bound for random
\emph{logconcave} IPs. We detail this in the next subsection.

\subsection{Relating the Integrality Gap to Branch-and-Bound}
\label{sec:bnb}

In recent breakthrough work, Dey, Dubey and
Molinaro~\cite{dey_branch-and-bound_2021} provided a framework for deriving
upper bounds on the size of branch-and-bound trees for random IPs with small
integrality gaps. Their framework consists of two parts. In the first part,
one deterministically relates the size any branch-and-bound tree using
best-bound first node selection to the size of knapsack polytopes whose
weights are induced by reduced costs and whose capacity is equal to the
integrality gap. We recall that in the best-bound first rule, the next node
to be processed is always the node whose LP relaxation value is the largest.
This is formally encoded by the following theorem, which corresponds to a
slightly adapted version of~\cite[Corollary 2]{dey_branch-and-bound_2021}. 

\begin{theorem} \label{thm:bnb}
Consider a binary integer program of the form
\begin{align}
\max         &~ c^\T x \nonumber \\
 \text{s.t. } & Ax\leq b \tag{Primal IP}\label{primal-ip} \\
	      & x \in \{0,1\}^m. \nonumber
\end{align}
Then, the best bound first branch-and-bound algorithm produces a tree of size 

\begin{equation}
n^{O(m)} \cdot \max_{\lambda \in \R^m} |\{x \in \{0,1\}^n: \sum_{i=1}^n x_i |(A^\T \lambda - c)_i| \leq \mathsf{IPGAP}(A,b,c)\}| + 1. \label{eq:bnb-size}
\end{equation}
\end{theorem}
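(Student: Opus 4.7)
The plan is to adapt the proof of~\cite[Corollary~2]{dey_branch-and-bound_2021}. Fix an optimal basic feasible solution $x^*$ of the LP relaxation and optimal dual multipliers $\lambda^* \in \R^m_{\geq 0}$ for the constraints $Ax \leq b$, and set the reduced cost vector $r := A^\T \lambda^* - c$. Since $x^*$ is basic in $\{x \in [0,1]^n : Ax \leq b\}$, at most $m$ of its coordinates are strictly fractional. Standard LP duality combined with complementary slackness on the box constraints $0 \leq x \leq 1$ gives, for every $x \in [0,1]^n$ with $Ax \leq b$,
\[
c^\T x \;\leq\; c^\T x^* \;-\; \sum_{i:\, x^*_i \in \{0,1\}} |r_i|\cdot|x_i - x^*_i|.
\]
Applied to any feasible point of the sub-LP at a branch-and-bound node $N$ with fixed variables $F_N \subseteq [n]$ and assignments $(\bar x_i)_{i \in F_N} \in \{0,1\}^{F_N}$, this bounds the LP value at $N$ by $c^\T x^* - \sum_{i \in F_N,\, x^*_i \in \{0,1\}} |r_i|\,\mathbf{1}[\bar x_i \neq x^*_i]$.

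Next, best-bound first forces every processed node $N$ to satisfy $\val_\mathsf{LP}(N) \geq \val_\mathsf{IP}(A,b,c)$: otherwise, the active node on the current root-to-IP-optimum path would have LP value at least $\val_\mathsf{IP}(A,b,c) > \val_\mathsf{LP}(N)$ and would be processed first, a contradiction. Combined with the bound above, the signature $y(N) \in \{0,1\}^n$ defined by $y(N)_i := \mathbf{1}[i \in F_N,\, x^*_i \in \{0,1\},\, \bar x_i \neq x^*_i]$ satisfies $\sum_i y(N)_i |r_i| \leq \mathsf{IPGAP}(A,b,c)$. Hence the number of distinct signatures of processed nodes is at most $|\{y \in \{0,1\}^n : \sum_i y_i |r_i| \leq \mathsf{IPGAP}(A,b,c)\}|$, which is in turn upper bounded by the $\max_\lambda$ quantity in~\eqref{eq:bnb-size} (attained at $\lambda = \lambda^*$).

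It then remains to show that only $n^{O(m)}$ processed nodes share any given signature. Two such nodes $N, N'$ agree on the values assigned to wrongly-fixed integer-valued coordinates of $x^*$, so they can differ only in (i) how the at most $m$ fractional coordinates of $x^*$ are handled (each may be unfixed or fixed to $0$ or $1$, contributing an $O(n^m)$ factor once the depths at which the branchings occur are taken into account) and (ii) which integer-valued coordinates of $x^*$ are additionally fixed to their correct value $x^*_i$. Because fixing a coordinate to $x^*_i$ preserves $x^*$ as an optimum of the sub-LP and leaves its value unchanged, best-bound first traverses the ``correct-fixing'' branches in a canonical order, so for each signature and each configuration in (i) only polynomially in $n$ distinct processed nodes can arise. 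Multiplying the three contributions and adding $+1$ for the trivial (unexplored root) case yields~\eqref{eq:bnb-size}.

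The main obstacle is the final step: controlling how many processed nodes share a common signature without an exponential blow-up in $n$. The essential leverage is that $x^*$ has only $O(m)$ fractional coordinates, which reduces a potential $2^n$-explosion over correct-fixing patterns to the polynomial $n^{O(m)}$; this is precisely where the interplay between the best-bound-first node-selection rule and the combinatorial structure of LP-basic solutions must be exploited, following the arguments of~\cite{dey_branch-and-bound_2021}.
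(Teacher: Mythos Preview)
The paper does not prove this theorem: it is stated as ``a slightly adapted version of~\cite[Corollary 2]{dey\_branch-and-bound\_2021}'' and used as a black box. Your plan to reproduce the Dey--Dubey--Molinaro argument is therefore exactly what the paper itself relies on, and your first three steps (the reduced-cost inequality, the best-bound-first lower bound $\val_\mathsf{LP}(N)\ge \val_\mathsf{IP}$, and the resulting knapsack containment of the signatures) are correct and match that framework.

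The genuine gap is your justification of the $n^{O(m)}$ factor. Your argument hinges on the claim that ``fixing a coordinate to $x^*_i$ preserves $x^*$ as an optimum of the sub-LP,'' but this is only true while $x^*$ is still \emph{feasible} at the node---i.e., before any of the $\le m$ fractional coordinates of $x^*$ has been fixed. Since the branching variable at a node is chosen fractional in that node's LP optimum, the very first branching at the root already fixes a fractional coordinate of $x^*$, after which $x^*$ is infeasible and your preservation argument no longer applies. Moreover, ``best-bound first traverses the correct-fixing branches in a canonical order'' is a statement about node \emph{selection}, not about how many nodes the tree \emph{contains}; it does not by itself rule out exponentially many nodes sharing a signature. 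The actual mechanism in~\cite{dey_branch-and-bound_2021} tracks the LP optimum \emph{at each node} (a basic solution with at most $m$ fractional coordinates, generally different from $x^*$) and controls how many branchings can occur along any root-to-node path before the LP becomes integral or its value drops below $\val_\mathsf{IP}$. Since you explicitly defer the details to~\cite{dey_branch-and-bound_2021}, the outline is acceptable as a pointer to the source, but the informal explanation you give for why only $n^{O(m)}$ nodes share a signature is not correct as written.
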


In the second part of the framework, one leverages the randomness in the
coefficients of $A,c$ to upper bound the maximum size of any knapsack
in~\eqref{eq:bnb-size}. In~\cite{dey_branch-and-bound_2021}, they give such
an upper bound for the specific packing instances studied by Dyer and
Frieze~\cite{dyer_probabilistic_1989}. In the present work, we generalize
their probabilistic framework to random \emph{logconcave} IPs. We now state our
main meta-theorem, which we prove in~\Cref{sec:bnb-adapt}. 
 
 \begin{theorem}
 \label{thm:meta-logcon} 
 Let $n \geq 100(m+1)$, $b \in \R^m$, and $W := \begin{bmatrix} c^\T \\ A \end{bmatrix} \in
 \R^{n \times (m+1)}$ be a matrix whose columns are independent logconcave
 random vectors with identity covariance. Then, for $G
 \geq 0$, $\delta \in (0,1)$, with probability at least 
\[
1-\Pr_{A,c}[{\rm IPGAP}(A,b,c) \geq G]-\delta-e^{-n/5}, 
\]
the best bound first branch-and-bound algorithm applied to~\eqref{primal-ip}
produces a tree of size at most 
\begin{equation}
 n^{O(m)} e^{2\sqrt{2nG}}/\delta. \label{eq:final-bnb}
\end{equation}
\end{theorem}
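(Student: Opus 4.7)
The plan is to apply \Cref{thm:bnb} and bound the maximum knapsack count $N^*$ by combining a 1-D density/Markov estimate (giving the correct expectation at each fixed $\lambda$) with a net argument to transfer the bound uniformly over all $\lambda$.

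\textbf{Setup.} Write $w_i \in \R^{m+1}$ for the $i$-th column of $W$, $\lambda' := (-1,\lambda^\T)^\T$, and $z_i(\lambda):=|w_i^\T\lambda'|$, so that $z_i(\lambda) = |(A^\T\lambda-c)_i|$ and $\|\lambda'\|\geq 1$. By \Cref{thm:bnb}, on $\{\mathsf{IPGAP}(A,b,c) \leq G\}$ the branch-and-bound tree has size at most $n^{O(m)}\cdot N^* + 1$, where
\[
N^* := \max_{\lambda \in \R^m} N(\lambda;G), \qquad N(\lambda;G) := \bigl|\{x \in \{0,1\}^n : \textstyle\sum_{i=1}^n x_i z_i(\lambda) \leq G\}\bigr|.
\]
It thus suffices to show $N^* \leq n^{O(m)}\,e^{2\sqrt{2nG}}/\delta$ with probability at least $1-\delta-e^{-n/5}$.

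\textbf{Expected count at fixed $\lambda$.} The projection $w_i^\T(\lambda'/\|\lambda'\|)$ is a 1-D marginal of an isotropic logconcave vector, hence centered isotropic logconcave, and its density is bounded by $1$ by the standard bound on 1-D isotropic logconcave densities (attained by the centered exponential). Thus $z_i(\lambda)$ has density at most $2/\|\lambda'\|\leq 2$, and by iterated convolution, for $|S|=k$, the density of $\sum_{i\in S} z_i(\lambda)$ at $t$ is at most $(2/\|\lambda'\|)^k t^{k-1}/(k-1)!$. Integrating gives $\Pr[\sum_{i\in S} z_i(\lambda)\leq G]\leq (2G)^k/k!$, and summing over supports, using $\binom{n}{k}/k! \leq (e^2 n/k^2)^k$,
\[
\E[N(\lambda;G)] \leq \sum_{k=0}^n \binom{n}{k}\frac{(2G)^k}{k!} \leq (n+1)\max_k \Bigl(\frac{2e^2 nG}{k^2}\Bigr)^k \leq (n+1)\,e^{2\sqrt{2nG}},
\]
the maximum being attained at $k^* = \lceil\sqrt{2nG}\rceil$.

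\textbf{Discretization and union bound.} To transfer this to $N^*$, condition on the event $E$ that $\max_i \|w_i\|_\infty \leq R_0 := O(n)$, which holds with probability $\geq 1-e^{-n/5}$ via sub-exponential tails for 1-D logconcave marginals and a union bound over the $n(m+1)$ coordinates. Fix a polynomial truncation radius $R = n^{O(1)}$ and an $\epsilon$-net $\mathcal{N}$ of the ball $\{\lambda\in\R^m:\|\lambda\|\leq R\}$ with $\epsilon := G/(n^2\sqrt{m}R_0)$, giving $|\mathcal{N}| \leq (3R/\epsilon)^m = n^{O(m)}$. On $E$, any $\lambda$ in the ball has a net point $\lambda^*$ with $|z_i(\lambda)-z_i(\lambda^*)| \leq \sqrt{m}R_0\epsilon$ for all $i$, hence $N(\lambda;G) \leq N(\lambda^*;G(1+1/n))$. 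For $\|\lambda\|>R$ a separate argument reduces $N(\lambda;G)$ to $N(\bar\lambda;G(1+1/n))$ for some $\bar\lambda$ with $\|\bar\lambda\|\leq R$, using that $\sum_i x_i z_i(\lambda)$ grows essentially linearly in $\|\lambda\|$ along any direction for every nonzero $x$, combined with a uniform-over-$\mu\in S^{m-1}$ anti-concentration bound on the marginals $|(A_{\cdot,i})^\T\mu|$. Applying Markov's inequality and the expectation bound, for each $\lambda^*\in\mathcal{N}$,
\[
\Pr[N(\lambda^*;G(1+1/n)) \geq T] \leq (n+1)\,e^{2\sqrt{2nG(1+1/n)}}/T,
\]
so choosing $T := |\mathcal{N}|\cdot(n+1)\,e^{2\sqrt{2nG(1+1/n)}}/\delta = n^{O(m)}\,e^{2\sqrt{2nG}}/\delta$ and union bounding over $\mathcal{N}$ yields total failure probability $\leq\delta$, completing the argument.

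\textbf{Main obstacle.} The chief difficulty is the truncation step --- showing the supremum over unbounded $\lambda$ can be reduced (up to $(1+1/n)$ slack in the budget) to a polynomial-radius ball. This requires a uniform lower bound on the radial growth of $\sum_i x_i z_i(\lambda)$ that holds simultaneously for every direction $\mu\in S^{m-1}$ and every nonzero $x\in\{0,1\}^n$, typically via an $\epsilon$-net on $S^{m-1}$ plus anti-concentration of the 1-D marginals $|(A_{\cdot,i})^\T\mu|$ and a Chernoff bound at each net point. Once this truncation is in place, the density/convolution estimate of Step 2 and the Markov union bound over $\mathcal{N}$ are routine.
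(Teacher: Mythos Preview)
Your high-level strategy --- an expectation bound at each fixed direction, followed by a net and a Markov/union bound --- is exactly the paper's strategy, and your expectation estimate is equivalent to the paper's Lemma~15. But two real gaps separate your proposal from a complete proof.

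\textbf{The truncation obstacle is avoidable.} You correctly identify reducing the sup over $\lambda\in\R^m$ to a bounded ball as the main difficulty, and your sketch for it (anti-concentration of $|(A_{\cdot,i})^\T\mu|$ uniformly in $\mu$, Chernoff at net points) is plausible but unfinished. The paper sidesteps the problem entirely: since $\|(1,-\lambda)\|_2\geq 1$, one has
\[
\Bigl\{x:\sum_i x_i|(c-A^\T\lambda)_i|\leq G\Bigr\}\subseteq \Bigl\{x:\sum_i x_i\bigl|\tfrac{(1,-\lambda)^\T}{\|(1,-\lambda)\|_2}W_i\bigr|\leq G\Bigr\},
\]
so it suffices to bound the sup over the \emph{unit sphere} $\mathbb{S}^m$, which is compact. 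No truncation radius is needed.

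\textbf{Your net argument fails for unbounded means.} You condition on $\max_i\|w_i\|_\infty\leq O(n)$ and appeal to sub-exponential tails of the 1-D marginals. But the theorem only assumes identity covariance, not zero mean (and it is applied in the paper to shifted uniform columns with nonzero mean). The sub-exponential tails control $w_i-\mu_i$, not $w_i$ itself; if the $\mu_i$ are large, your event $E$ need not hold with the claimed probability, and the Lipschitz estimate $|z_i(\lambda)-z_i(\lambda^*)|\leq\sqrt{m}R_0\epsilon$ breaks. The paper handles this by building the net not as a plain $\epsilon$-net of the sphere, but as $\epsilon$-nets of the cells of a hyperplane arrangement $\{u^\T\mu_i=j/(2n^2)\}_{i,j}$. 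Inside each cell, $u^\T\mu_i$ is either pinned to a width-$1/(2n^2)$ interval or so large that the corresponding coordinate is forced to $0$ in every knapsack-feasible $x$; this lets the Lipschitz comparison go through using only $\|u^\T(W-\E[W])\|_1<4n$ (Lemma~16), which \emph{is} a centered quantity.

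A minor point: your claim that the 1-D marginal $w_i^\T(\lambda'/\|\lambda'\|)$ is ``centered'' is false in general, but harmless --- the density bound $\leq 1$ for variance-$1$ logconcave variables (Fradelizi) does not require zero mean.
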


The class of logconcave distributions is quite rich (see
subsection~\ref{sec:logcon} for a formal definition), e.g.~the uniform
distribution over any convex body as well as all of its marginals are
logconcave. We are therefore hopeful that interesting bounds on the size of
branch-and-bound trees can be obtained for a wide range of random logconcave
IPs, which by~\Cref{thm:meta-logcon} reduces to obtaining suitable bounds on
the integrality gap.

When $A,c$ have i.i.d.~uniform $[0,1]$ coefficients
and $b = \beta n$, $\beta \in
(0,1/2)^m$ and $\beta_{\min} := \min_{i \in [n]}
\beta_i$, Dyer and Frieze~\cite{dyer_probabilistic_1989} proved that for $n$ large enough 
\[
\Pr_{A,c}[{\rm IPGAP}(A,b,c) \geq \alpha a_1 \log^2 n/n] \leq 2^{-\alpha/a_2}+1/(2n), \forall \alpha \geq 1,
\]
where $a_1 = \Theta(1/\beta_{\min})^m$ and $a_2 = 2^{\Theta(m)}$.
In~\cite{dey_branch-and-bound_2021}, Dey, Dubey and Molinaro use this
integrality gap result combined with a probabilistic analysis of the bound
in~\Cref{thm:bnb} to show that the tree size is at most
\[
n^{O(ma_1\log a_1 + \alpha a_1 \log m)}
\]
with probability $1 - 2^{-\alpha/a_2} - 1/n$. A stronger bound can be obtained from \Cref{thm:meta-logcon}. 

We first observe that $2\sqrt{3} {\rm IPGAP}(A,b,c) = {\rm IPGAP}(2\sqrt 3 A, 2\sqrt 3 b, 2\sqrt 3 c)$,
noting that $W = 2\sqrt 3\begin{bmatrix} c^\T \\ A \end{bmatrix}$ has identity covariance.
Plugging $G = 2\sqrt 3 \alpha a_1 \log^2 n /n$ into \Cref{thm:meta-logcon} with $\delta = 1/(2n)-e^{-n/5}$, we get an improved tree-size bound of
\[
    n^{O(m)} e^{2\sqrt{4\sqrt 3 \alpha a_1 \log^2 n}} = n^{O(m) + 4\sqrt{\sqrt{3}\alpha a_1}}.
\]

Proceeding in a similar fashion, we can easily derive a tree-size bound for
Gaussian IPs by combining~\Cref{thm:ipgap_main_result}
and~\Cref{thm:meta-logcon}. 

\begin{corollary} \label{cor:gaussian-bnb}
For $C \geq 200$ as in~\Cref{thm:ipgap_main_result}, $m \in \N$, $n \geq C
m^{4.5}$, $A \in \R^{m \times n}, c \in \R^n$ with i.i.d. $\mathcal{N}(0,1)$
entries and $b \in \R^m$, $\|b^-\|_2 \leq n/10$. Then, for $1 \leq t \leq
\frac{n}{Cm^{2.5}(m + \log n)^2}$, with probability at least
$1-4(1-\frac{1}{25})^t-2/n^7$, the size of any best bound first
branch-and-bound tree for solving \eqref{primal-ip} is at most
$e^{O(\sqrt{t} m^{2.25})} n^{O(\sqrt{t} m^{1.25})}$.
\end{corollary}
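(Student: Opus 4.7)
The plan is to directly combine the integrality gap estimate (\Cref{thm:ipgap_main_result}) with the logconcave meta-theorem (\Cref{thm:meta-logcon}). First, I would verify the hypotheses of the meta-theorem: the standard Gaussian density on $\R^n$, proportional to $\exp(-\|x\|^2/2)$, is logconcave with identity covariance, so the columns of the matrix formed from $A$ and $c$ are i.i.d.\ standard Gaussians and thus satisfy the logconcavity/covariance hypothesis. The size condition $n \geq 100(m+1)$ is implied by $n \geq C m^{4.5}$ with $C \geq 200$ for all $m \geq 1$.

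Next, I would set $G := \finalHpUpperBound$, so that \Cref{thm:ipgap_main_result} yields
\[
  \Prob[{\rm IPGAP}(A,b,c) \geq G] \leq 4\parensfit{1 - \tfrac{1}{25}}^t + n^{-7}
\]
in the allowed range of $t$. Choosing $\delta := n^{-7} - e^{-n/5}$ (which is positive since $n \geq 200$ makes $e^{-n/5} \ll n^{-7}$), the total failure probability in \Cref{thm:meta-logcon} becomes $\Pr[{\rm IPGAP} \geq G] + \delta + e^{-n/5} \leq 4(1-1/25)^t + 2 n^{-7}$, matching the stated success probability.

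The remaining step is to simplify the tree-size bound $n^{O(m)} e^{2\sqrt{2nG}}/\delta$. A direct calculation gives
\[
  2\sqrt{2nG} = 2\sqrt{2 \cdot 10^{15}\, t\, m^{2.5}(m+\log n)^2} = O\bigl(\sqrt{t}\, m^{1.25}(m + \log n)\bigr),
\]
which splits as $O(\sqrt{t}\, m^{2.25}) + O(\sqrt{t}\, m^{1.25}\log n)$, so
\[
  e^{2\sqrt{2nG}} = e^{O(\sqrt{t}\, m^{2.25})} \cdot n^{O(\sqrt{t}\, m^{1.25})}.
\]
Since $1/\delta = O(n^7)$ and $m \geq 1$, $t \geq 1$, both the $1/\delta$ factor and the $n^{O(m)}$ prefactor are absorbed into $n^{O(\sqrt{t}\, m^{1.25})}$, yielding the claimed bound.

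Because the heavy lifting is done by the two cited theorems, there is no substantive obstacle: the argument is a bookkeeping exercise. The only care required is in selecting $\delta$ so that the failure probabilities line up to give the clean $2/n^7$ term, and in verifying that the polynomial factors in $n$ and $m$ cleanly absorb into $n^{O(\sqrt{t}\, m^{1.25})}$ via the $O$-notation.
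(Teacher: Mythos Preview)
Your proposal is correct and follows essentially the same route as the paper: apply \Cref{thm:meta-logcon} with $G$ equal to the gap threshold from \Cref{thm:ipgap_main_result}, then simplify $2\sqrt{2nG} = O(\sqrt{t}\,m^{1.25}(m+\log n))$ and absorb the polynomial factors. The only cosmetic difference is your choice $\delta = n^{-7} - e^{-n/5}$ versus the paper's $\delta = 1/(2n^7)$; both yield the same $2/n^7$ failure term and the same tree-size bound.
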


\begin{proof}
Since $A,b,c$ satisfy the conditions of \Cref{thm:ipgap_main_result}, for
$G = 10^{15} \cdot \frac{m^{2.5}(m + \log n)^2}{n}$, we have that
$\mathrm{IPGAP}(A,b,c) \geq tG$ with probability at most
$4(1-\frac{1}{25})^t+1/n^7$. 
 
Applying \Cref{thm:meta-logcon} to $W$ with $\delta = 1/(2n^7)$, using the fact that $W \in \R^{m+1 \times n}$ has i.i.d.~$\mathcal{N}(0,1)$
entries, with probability at least 
\[
1-(4(1-\frac{1}{25})^t+1/n^7)-\delta-e^{-n/5} \geq 1-4(1-\frac{1}{25})^t-2/n^7, 
\]
we get that the size of the branch-and-bound tree is at most
\[
 n^{O(m)} e^{2\sqrt{2 t G n}}/\delta \leq n^{O(m)} e^{O(\sqrt{t} m^{1.25}(m+\log n))}(2n^7) =
 e^{O(\sqrt{t} m^{2.25})}  n^{O(\sqrt{t} m^{1.25})}. \quad \qed
\]
\end{proof}

\subsection{Proof Overview for~\Cref{thm:ipgap_main_result}}
\label{sec:ipgap-overview}

Our proof strategy follows along similar lines to that of Dyer and
Frieze~\cite{dyer_probabilistic_1989}, which we now describe. In their
strategy, one first solves an auxiliary LP $\max c^\T x, Ax \leq b - \epsilon
1_m$, for $\epsilon > 0$ small, to get its optimal solution $x^*$, which is
both feasible and nearly optimal for the starting LP (proved by a simple
scaling argument), together with its optimal dual solution $u^* \geq 0$ (see
subsection~\ref{sec:dual-progr} for the formulation of the dual). From here,
they round down the fractional components of $x^*$ to get a feasible IP
solution $x' := \lfloor x^* \rfloor$. We note that the feasibility of $x'$
depends crucially on the packing structure of the LPs they work with, i.e.,
that $A$ has non-negative entries (which does not hold in the Gaussian
setting). Lastly, they construct a nearly optimal integer solution $x''$, by
carefully choosing a subset of coordinates $T \subset \{i \in [n]: x'_i =
0\}$ of size $O(\operatorname{poly}(m) \log n)$, where they flip the
coordinates of $x'$ in $T$ from $0$ to $1$ to get $x''$. The coordinates of
$T$ are chosen accordingly the following criteria. Firstly, the coordinates
should be \emph{very cheap} to flip, which is measured by the absolute value
of their \emph{reduced costs}. Namely, they enforce that $\len{c_i -
A_{\cdot,i}^\T u^*} = O(\log n/n)$, $\forall i \in T$. Secondly, $T$ is
chosen to make the \emph{excess slack} $\length{A(x^*-x'')}_\infty \leq
1/\operatorname{poly}(n)$, i.e., negligible. We note that guaranteeing the existence
of $T$ is highly non-trivial. Crucial to the analysis is that after
conditioning on the exact value of $x^*$ and $u^*$, the columns of $W :=
\begin{bmatrix} c^\T \\ A \end{bmatrix} \in \R^{(m+1) \times n}$ (the objective
extended constraint matrix) that are indexed by $N_0 := \{i \in [n]: x^*_i = 0\}$
are independently distributed subject to having negative reduced cost, i.e.,
subject to $c_i - A_{\cdot,i}^\T u^* < 0$ for $i \in N_0$ (see
\Cref{lem:independence}). It is the large amount of left-over randomness in
these columns that allowed Dyer and Frieze to show the existence of the
subset $T$ via a discrepancy argument (more on this below). Finally, given a
suitable $T$, a simple sensitivity analysis is used
to show the bound on the gap between $c^\T x''$ and the~\eqref{primal-lp}
value. This analysis uses the basic formula for the optimality gap between
primal and dual solutions (see~\eqref{eq:gap-formula} in
subsection~\ref{sec:dual-progr}), and relies upon bounds on the size of the
reduced costs of the flipped variables, the total excess slack and the norm
of the dual optimal solution $u^*$.

\paragraph{\bf Adapting to the Gaussian setting.}
As a first difference with the above strategy, we are able to work directly
with the optimal solution $x^*$ of the original LP without having to replace
$b$ by $b' := b-\epsilon 1_m$. The necessity of working with this more
conservative feasible region in the packing setting of~\cite{dyer_probabilistic_1989} is that
flipping $0$ coordinates of $x'$ to $1$ can only \emph{decrease} $b-Ax'$. In
particular, if the coordinates of $b-Ax' \geq 0$ are too small, it becomes
difficult to find a set $T$ that doesn't force $x''$ to be infeasible. By
working with $b'$ instead of $b$, they can ensure that $b-Ax' \geq \epsilon
1_m$, which avoids this problem. In the Gaussian setting, it turns out that
we have equal power to both increase and decrease the slack of $b-Ax'$, due
to the fact that the Gaussian distribution is symmetric about $0$. We are in
fact able to simultaneously fix both the feasibility and optimality error of
$x'$, which gives us more flexibility. In particular, we will be able to use
randomized rounding when we move from $x^*$ to $x'$, which will allow us to
start with a smaller initial slack error than is achievable by simply rounding $x^*$ down.

\paragraph{\bf The Discrepancy Lemma.} 
Our main quantitative improvement -- the reduction from an exponential to a
polynomial dependence in $m$ -- arises from two main sources. The first
source of improvement is a substantially improved version of a discrepancy
lemma of Dyer and Frieze~\cite[Lemma 3.4]{dyer_probabilistic_1989}. This
lemma posits that for any large enough set of ``suitably random'' columns in
$\R^m$ and any not too big target vector $D \in \R^m$, then with
non-negligible probability there exists a set containing half the columns
whose sum is very close to $D$. This is the main lemma used to show the
existence of the subset $T$, chosen from a suitably filtered subset of the
columns of $A$ in $N_0$, used to reduce the excess slack. The non-negligible
probability in their lemma was of order $2^{-O(m)}$, which implied that one
had to try $2^{O(m)}$ disjoint subsets of the filtered columns before having
a constant probability of success of finding a suitable $T$. In our improved
variant of the discrepancy lemma, we show that by
sub-selecting a $1/(2\sqrt{m})$-fraction of the columns instead of
$1/2$-fraction, we can increase the success probability to constant, with the
caveat of requiring a slightly larger set of initial columns. The formal statement of our improved discrepancy lemma is given below.

\begin{restatable}{lemma}{disclemma}
	\label{lemma:disc}
	For $k,m \in \N$, let $a = \lceil 2\sqrt{m} \rceil$ and $\theta > 0$ satisfy
	$\left(\frac{2\theta}{\sqrt{2\pi k}}\right)^m {ak \choose k} = 1$. Let $Y_1,\dots,Y_{ak} \in \R^m$ be i.i.d. random vectors with independent coordinates. For $k_0 \in \N, \gamma \geq 0, M > 0$, assume that $\forall i \in [m]$, $Y_{1,i}$ is a $(\gamma,k_0)$-Gaussian convergent continuous random variable with maximum density at most $M$. Then, if
	\[
	k \geq \max \{ (4\sqrt{m} + 2)k_0, 144m^{\frac{3}{2}}(\log M + 3), 150\,000(\gamma + 1)m^{\frac{7}{4}} \},
	\]
	for any vector $D \in \R^m$ with $\length{D}_2 \leq \sqrt{k}$ the following holds:
	\begin{equation}
	\Prob \left[ \exists K \subset [ak] : |K| = k, \length{(\sum_{j\in K} Y_{j})-D}_\infty \leq \theta \right] \geq \frac{1}{25}. \label{eq:disc-prob}
	\end{equation}
\end{restatable}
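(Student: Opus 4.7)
My strategy is a second-moment (Paley--Zygmund) argument. For each subset $K \subseteq [ak]$ with $\abs{K}=k$, define the indicator
\[
X_K := \mathbf{1}\bigl[\length{S_K - D}_\infty \leq \theta\bigr], \qquad S_K := \sum_{j \in K} Y_j,
\]
and set $Z := \sum_K X_K$. Since $\Prob[Z \geq 1] \geq (\E Z)^2/\E[Z^2]$, it suffices to produce a constant lower bound on $\E[Z]$ and a matching upper bound on $\E[Z^2]$; the constant $1/25$ in \eqref{eq:disc-prob} will absorb the multiplicative losses from both steps and from the Gaussian convergence approximation.

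\textbf{First moment.} Fix any $K$. The coordinates of $S_K$ are independent, and each coordinate is a sum of $k \geq k_0$ i.i.d.\ copies of a $(\gamma,k_0)$-Gaussian convergent random variable. Using the Gaussian convergence hypothesis coordinate-wise, the probability $\Prob[\len{(S_K)_i - D_i} \leq \theta]$ is lower bounded by $2\theta$ times the $\mathcal{N}(0,k)$ density at $D_i$, up to a factor $e^{-O(\gamma)}$. Multiplying across coordinates, using $\length{D}_2 \leq \sqrt{k}$ to control $e^{-\length{D}_2^2/(2k)} \geq e^{-1/2}$, and invoking the calibration $\binom{ak}{k}(2\theta/\sqrt{2\pi k})^m = 1$ then yields $\E[Z] \geq c_1$ for an absolute constant $c_1 > 0$.

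\textbf{Second moment.} I would partition ordered pairs $(K,K')$ by $j := \abs{K\cap K'}$ and split $S_K = S_{K\cap K'} + S_{K\setminus K'}$ and $S_{K'} = S_{K\cap K'} + S_{K'\setminus K}$ into three independent sums. Conditioning on $S_{K\cap K'}$ makes the events $X_K = 1$ and $X_{K'} = 1$ independent, and a density calculation with joint covariance $\bigl(\begin{smallmatrix} k & j \\ j & k \end{smallmatrix}\bigr)$ per coordinate would give
\[
\Prob[X_K = X_{K'} = 1] \leq C_\gamma (2\theta)^{2m}(2\pi)^{-m}(k^2-j^2)^{-m/2}\exp\parensfit{-\frac{\length{D}_2^2}{k+j}},
\]
provided both $j$ and $k-j$ exceed $k_0$. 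The strips $j \leq k_0$ and $k - j \leq k_0$ are handled separately via the crude density bound $M$; the hypothesis $k \geq 144 m^{3/2}(\log M + 3)$ ensures their contribution is absorbed. Dividing by the square of the first-moment estimate and weighting each $K'$ by the hypergeometric probability $\binom{k}{j}\binom{(a-1)k}{k-j}/\binom{ak}{k}$ reduces the task to bounding
\[
\frac{\E[Z^2]}{(\E Z)^2} \leq C_\gamma' \sum_{j=0}^{k} \frac{\binom{k}{j}\binom{(a-1)k}{k-j}}{\binom{ak}{k}} \bigl(1-(j/k)^2\bigr)^{-m/2}
\]
by an absolute constant.

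\textbf{Main obstacle.} The heart of the argument is this last bound, and it is precisely what forces $a = \lceil 2\sqrt{m}\rceil$. The hypergeometric weight concentrates at $j \approx k/a$, where the blow-up factor equals $(1 - 1/a^2)^{-m/2} \to e^{1/8} = O(1)$; any slower growth of $a$ in $m$ would make this base factor diverge with $m$. For the tail $j \geq k/a + \Delta$, I plan to combine the Hoeffding-type inequality $\binom{k}{j}\binom{(a-1)k}{k-j}/\binom{ak}{k} \leq 2 e^{-2(j/k - 1/a)^2 k}$ with the estimate $(1-(j/k)^2)^{-m/2} \leq \exp(m(j/k)^2)$ on the range where $j/k$ is bounded away from $1$, and check that the combined exponent $-2k(j/k - 1/a)^2 + m(j/k)^2$ stays bounded above whenever $k = \Omega(m^{7/4})$ -- the regime guaranteed by $k \geq 150{,}000(\gamma+1)m^{7/4}$. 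The leftover near-degenerate range $j$ close to $k$ has negligible hypergeometric weight and can be dominated trivially. Accumulating the constants $C_\gamma, C_\gamma'$ and $c_1$ along the way, the ratio $(\E Z)^2/\E[Z^2]$ is then at least $1/25$.
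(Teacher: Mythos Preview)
Your overall plan matches the paper's: second-moment method on $Z = \sum_K X_K$, a coordinate-wise Gaussian approximation for the first moment giving $\E[Z] \geq e^{-1}$, and a decomposition of $\E[Z^2]$ by the overlap $j = |K \cap K'|$. The recognition that $a = \lceil 2\sqrt{m}\rceil$ is forced by $(1 - 1/a^2)^{-m/2} = O(1)$ at the typical overlap $j \approx k/a$ is exactly the key observation.

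The genuine gap is in where you locate the constraint $k \geq 150{,}000(\gamma+1)m^{7/4}$. Your ``combined exponent'' $-2k(t - 1/a)^2 + m t^2$, with $t=j/k$, is a concave quadratic in $t$ maximized near $t \approx 1/a$ with value $\approx \tfrac{m}{a^2} \cdot \tfrac{2k}{2k-m} \approx 1/4$ once $k \geq 2m$; it never needs $k = \Omega(m^{7/4})$. The real bottleneck is the step you sweep into the single constant $C_\gamma$. The Gaussian-convergence hypothesis controls the density error only to order $\gamma/j$ and $\gamma/(k-j)$ \emph{per coordinate}, and since you multiply $m$ such coordinate estimates together, an $m$-independent $C_\gamma$ demands a per-coordinate relative error of size $1 + O(1/m) = 1 + O(1/\alpha^2)$, where $\alpha = k/j$. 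Achieving this for the typical $\alpha \approx a \approx 2\sqrt m$ is exactly what forces $k \gtrsim (\gamma+1)\alpha^{7/2} \sim (\gamma+1)m^{7/4}$; in the paper this is isolated as a separate technical lemma yielding the per-coordinate upper bound $\tfrac{4\theta^2}{2\pi k}\bigl(1 + \tfrac{16}{9\alpha^2}\bigr)$. The paper's second-moment decomposition is also coarser than yours: it applies the refined bound only for $j \in (k/(2a), 3k/(2a))$ and the crude $(2M\theta)^{2m}$ bound for \emph{all} other $j$, not just the strips $j \leq k_0$ and $j \geq k - k_0$. The hypergeometric tail $\Pr\bigl[j \notin (k/(2a), 3k/(2a))\bigr] \leq 4e^{-k/(12a)}$ then kills the crude bound precisely under $k \geq 144 m^{3/2}(\log M + 3)$, which is what that hypothesis is for.
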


The notion of Gaussian convergence used above (see~\Cref{sec:local-limit} for a formal definition), quantifies the speed at which the density of
normalized sums of i.i.d.~random variables converges to the standard Gaussian
density. This definition will in fact enforce that the entries of all the
vectors in~\Cref{lemma:disc} have mean $0$ and variance $1$. Apart from the
increased probability of success, we improve many other aspects
of~\cite[Lemma 3.4]{dyer_probabilistic_1989}. In particular, we remove the
restriction that the entries be bounded random variables, and we support
targets of norm exactly $\sqrt{k}$ instead of $k^{\alpha}$, for any $\alpha <
1/2$. Furthermore,~\cite[Lemma 3.4]{dyer_probabilistic_1989} is proved only
in the asymptotic regime where $k \rightarrow \infty$, whereas we give
explicit parameter dependencies, which are all polynomial in $m$. Taken
together, these improvements make the lemma easier to use and more flexible,
which should enable further applications. We refer the reader
to~\Cref{section:disc} for more details.

\paragraph{\bf Reduced cost filtering.} The second source of improvement is
the use of a much milder filtering step mentioned above. In both the uniform
and Gaussian case, the subset $T$ is chosen from a subset of $N_0$ associated
with columns of $A$ having reduced costs of absolute value at most some
parameter $\Delta > 0$. The probability of finding a suitable $T$ increases
as $\Delta$ grows larger, since we have more columns to choose from, and the
target integrality gap scales linearly with $\Delta$, as the columns we
choose from become more expensive as $\Delta$ grows. Depending on the
distribution of $c$ and $A$, the reduced cost filtering induces non-trivial
correlations between the entries of the corresponding columns of $A$, which
makes it difficult to use them within the context of the discrepancy lemma.
To deal with this problem in the uniform setting, Dyer and Frieze filtered
more aggressively, by additionally restricting to the columns of $A$ lying
in a sub-cube $[\alpha,\beta_{\min}]^m$, where $\alpha = \Omega(\log^3 n/n)$
and $\beta_{\min} := \min_{i \in [m]} \beta_i$ as above. This allowed them to ensure that the
distribution of the filtered columns in $A$ is uniform in
$[\alpha,\beta_{\min}]^m$, thereby removing the unwanted correlations. In the
packing setting, both aggressive and reduced cost filtering can have success
probability $\Theta(\beta_{\min})^m \Delta$, so aggressive filtering is not
much more expensive than reduced cost filtering. For an illustrative
calculation, if $u^* = 1_m/(m \beta_{\min})$ and $\Delta \leq 1$, then for $i \in N_0$, $A_i \in [0,1]^m, c_i \in [0,1]$ distributed uniformly, the reduced cost filtering probability is essentially equal to 
\begin{align*}
\Pr[A_i^\T u^*-c_i \in [0,\Delta]] &\leq \Delta \Pr[\sum_{j=1}^m A_{ij}/(m \beta_{\min}) \leq 2] \\
                                   &\leq \Delta {\rm vol}_m(\{a \geq 0: \sum_{j=1}^m a_j \leq 2m \beta_{\min}\}) \leq \Delta (2 e \beta_{\min})^m.
\end{align*}
When $A_i,c_i$ have $\mathcal{N}(0,1)$ entries however, we have $\Pr_{A_i,c_i}[A_i^\T u^*-c_i \in
[0,\Delta]] = \Theta(\Delta/\|(1,u^*)\|_2)$ for $\Delta \in [0,1]$. Given
this much larger success probability, we show how to work with only reduced
cost filtering in the Gaussian setting. While the entries of the filtered
columns of $A$ do indeed correlate, using the rotational symmetry of the
Gaussian distribution, we show that after applying a suitable rotation $R$,
the coordinates of the filtered columns of $RA$ are all independent (see \Cref{cor:rotate_coordinates}). This allows
us to apply the discrepancy lemma in a ``rotated space'', thereby completely
avoiding the correlation issues in the uniform setting.

\paragraph{\bf Sparsity of $x^*$ and boundedness of $u^*$.} As already mentioned, we are also able to substantially relax the rigid
requirements on the right hand side $b$ and to remove any stringent
``shape-dependence'' of the integrality gap on $b$. Specifically, the shape parameter $\beta_{\min}$ above is used to both lower bound $|N_0|$ by
roughly $\Omega((1-2\beta_{\min})n)$, the number of zeros in $x^*$, as well as upper
bound the $\ell_1$ norm of the optimal dual solution $u^*$ by $O(1/\beta_{\min})$ (this a main reason for the choice of the $[\alpha,\beta_{\min}]^m$ sub-cube above).
These bounds are both crucial for determining the existence of $T$. In the
Gaussian setting, we are able to establish $|N_0| = \Omega(n)$ and $\|u^*\|_2
= O(1)$, using only that $\|b^-\|_2 \leq n/10$. Due to the different nature
of the distributions we work with, our arguments to establish these bounds
are completely different from those used by Dyer and Frieze. Firstly, the
lower bound on $|N_0|$, which is strongly based on the packing structure of
the IP in~\cite{dyer_probabilistic_1989}, is replaced by a sub-optimality
argument. Namely, we show that the objective value of any LP basic solution
with too few zero coordinates must be sub-optimal, using the concentration
properties of the Gaussian distribution (see~\Cref{lem:ones}). The upper
bound on the $\ell_1$ norm of $u^*$ in~\cite{dyer_probabilistic_1989} is
deterministic and based on packing structure; namely, that the objective
value of a~\ref{primal-lp} of packing-type is at most $\sum_{i=1}^n c_i \leq
n$ (since $c_i \in [0,1], \forall i \in [m]$). In the Gaussian setting, we
prove our bound on the norm of $u^*$ by first establishing simple upper and
lower bounds on the dual objective function, which hold with overwhelming
probability, and optimizing over these simple approximations (see
\Cref{lem:primal-dual-props}).

\paragraph{\bf Future directions.} Given the above, a first question is
whether one can extend the integrality gap argument above to a larger class
of logconcave IPs. An important technical difficulty is to understand whether
\Cref{lemma:disc} can be generalized to handle random columns whose entries
are allowed to have non-trivial correlations and whose entries have non-zero
means. A second question is whether one can improve the current parameter
dependencies, both in terms of improving the integrality gap and relaxing the
restrictions on $b$. For this purpose, one may try to leverage flipping both
$0$s to $1$ and $1$s to $0$ in the rounding of $x'$ to $x''$.  The columns of
$W$ associated with the one coordinates of $x^*$ are no longer independent
however. A final open question is whether these techniques can be extended to
handle discrete distributions on $A$ and $c$.

\subsection{Related Work}
The worst-case complexity of solving $\max\{ c^\T x : Ax = b, x \geq 0, x \in
\Z^n\}$ scales as $n^{O(n)}$ times a polynomial factor in the bit complexity
of the problem. This is a classical result due to Lenstra \cite{lenstra1983}
and Kannan \cite{kannan1987} which is based on lattice basis reduction
techniques.

Beyond these worst-case bounds, the performance of basis reduction techniques
for determining the feasibility of random integer programs has been
analyzed. In this context, basis reduction is used to reformulate $Ax \leq b,
x \in \Z^n$ as $AUw \leq b, w \in \Z^n$ for some unimodular matrix $U \in
\Z^{n \times n}$, after which a simple variable branching scheme is applied
(i.e., branching on integer hyperplanes in the original space). Furst and
Kannan~\cite{FK89} showed that subset-sum instances of the form $\sum_{i=1}^n
x_i a_i = b$, $x \in \{0,1\}^n$, where each $a_i$, $i \in [n]$, is chosen
uniformly from $\{1,\dots,M\}$ and $b \in \Z_+$, can be solved in polynomial
time with high probability in this way if $M = 2^{\Omega(n^2)}$. Pataki,
Tural and Wong~\cite{pataki_basis_2010} proved generalizations of this result
for IPs of the form $f \leq Ax \leq g, l \leq x \leq u, x \in \Z^n$, where
the coefficients of $A$ are uniform in $\{1,\dots,M\}$ and $M$ is ``large''
compared to $\|(g-f,u-l)\|$. Apart from the different type of branching,
compared to the present work, we note that the IPs analyzed in these models
are either infeasible or have a unique feasible solution with high
probability.  

Another line of works has analyzed dynamic programming algorithm solving IPs with integer data~\cite{fritz2020,jansen2019,papadimitriou1981}. For $A \in \Z^{m \times n}, b \in \Z^m$,~\cite{jansen2019} proved that  
$ \max \{ c^\T x : Ax = b, x \geq 0, x \in \Z^n\}$
can be solved in time $O(\sqrt{m}\Delta)^{2m}\log(\|b\|_\infty)+O(nm)$,
where $\Delta$ is the largest absolute value of entries in the input matrix $A$ .
Integer programs of the form
$ \max \{ c^\T x : Ax = b, 0 \leq x \leq u, x \in \Z^n \}$
can similarly be solved in time
$$n \cdot O(m)^{(m+1)^2} \cdot O(\Delta)^{m\cdot(m+1)} \log^2(m\cdot \Delta),$$
which was proved in \cite{fritz2020}.
Note that integer programs of the form
$\max\{ c^\T x : Ax \leq b, x \in \{0,1\}^n\}$
can be rewritten in this latter form by adding $m$ slack variables.

The complexity of integer programming has also been studied from the perspective of \emph{smoothed analysis}. In this context, R\"oglin and V\"ocking~\cite{Rglin2007} proved that a class of IPs satisfying some minor conditions
has polynomial smoothed complexity
if and only if that class admits a pseudopolynomial time algorithm.
An algorithm has polynomial smoothed complexity if its running time is polynomial
with high probability when its input has been perturbed by adding random noise,
where the polynomial may depend on the inverse magnitude $\phi^{-1}$ of the noise
as well as the dimensions $n,m$ of the problem.
An algorithm runs in pseudopolynomial time if the running time is polynomial
when the numbers are written in unary, i.e., when the input data consists of
integers of absolute value at most $\Delta$ and the running time is bounded
by a polynomial $p(n, m, \Delta)$.
In particular, they prove that solving the randomly perturbed problem requires only polynomially
many calls to the pseudopolynomial time algorithm
with numbers of size $(nm\phi)^{O(1)}$
and considering only the first $O(\log(nm\phi))$ bits of each of the perturbed
entries.

One may in fact compare the complexity of dynamic programming and
branch-and-bound for Gaussian IPs using the result of~\cite{Rglin2007}. If we
choose $A \in \R^{m \times n},c \in \R^n$ as well as $b \in \R^m$ to have
i.i.d.~$N(0,1)$ entries, the result of \cite{Rglin2007} implies that with
high probability, to solve~\eqref{primal-ip} it is sufficient to solve
polynomially many problems with integer entries of size $n^{O(1)}$. Since
$\Delta = n^{O(m)}$ in this setting (by Hadamard's inequality), the result of
\cite{fritz2020} implies that~\eqref{primal-ip} can be solved in time
$n^{O(m^3)}$ with high probability. In comparison,
by~\Cref{cor:gaussian-bnb}, for any fixed $\epsilon \in (0,1)$,
branch-and-bound solves~\eqref{primal-ip} in time $n^{O(m^{1.25})}$, for $n
\geq 2^m$, with probability $1-\epsilon$. 

\subsection{Organization}
In \Cref{sec:preliminaries}, we give preliminaries on probability theory,
linear programming and integer rounding.
In \Cref{sec:properties_optimal_solutions}, we prove properties of the optimal primal and dual LP solutions $x^*$ and $u^*$,
and in \Cref{sec:independence_zeros}, we characterize the distribution of the columns of the objective extended constraint matrix 
corresponding to the zero entries of $x^*$.
In \Cref{sec:proof_main_result}, we prove \Cref{thm:ipgap_main_result},
using a discrepancy result that we prove in
\Cref{section:disc}. In \Cref{sec:bnb-adapt}, we prove \Cref{thm:meta-logcon}, our meta-theorem for random logconcave IPs.

\section{Preliminaries}
\label{sec:preliminaries}
\subsection{Basic Notation}

We denote the reals and non-negative reals by $\R,\R^+$ respectively, and the integers and positive integers by $\Z,\N$ respectively. For $k \geq 1$ an integer, we let $[k] := \{1,\dots,k\}$. For
$s \in \R$, we let $s^+ := \max\{s,0\}$ and $s^- := \min \{s,0\}$ denote the
positive and negative part of $s$. We extend this to a vector $x \in \R^n$ by
letting $x^{+(-)}$ correspond to applying the positive (negative) part
operator coordinate-wise. We let $\|x\|_2 = \sqrt{\sum_{i=1}^n x_i^2}$ and
$\|x\|_1 = \sum_{i=1}^n |x_i|$ denote the $\ell_2$ and $\ell_1$ norm
respectively. We use $\log x$ to denote the base $e$ natural logarithm. We use
$0_m,1_m \in \R^m$ to denote the all zeros and all ones vector respectively,
and $e_1,\dots,e_m \in \R^m$ denote the standard coordinate basis.
We write $\R^m_+ := [0,\infty)^m$.

For a random variable $X \in \R$, we let $\E[X]$ denote its expectation and
$\Var[X] := \E[X^2]-\E[X]^2$ denote its variance. For a random vector $X
\in \R^d$, we define its mean $\E[X] := (\E[X_1],\dots,\E[X_d])$ and its
covariance matrix 
\[
\Cov(X) := \E[XX^\T]-\E[X]\E[X]^\T = (\E[X_iX_j]-\E[X_i]\E[X_j])_{i,j \in [d]}.
\]
For any $u \in \R^d$, we note that $\Var[u^\T X] = \E[(u^\T X)^2]-\E[u^\T
X]^2 = u^\T \Cov(X) u$. We say that $X$ has identity covariance if $\Cov(X) =
I_d$, the $d \times d$ identity matrix. 

$X \in  \R^d$ is a continuous random vector if it admits a probability
density $f: \R^d \rightarrow \R_+$ satisfying $\Pr[X \in A] = \int_A f(x)
dx$, for all measurable $A \subseteq \R^d$. We will say that a continuous
random vector has maximum density at most $M > 0$ if its probability density
$f$ satisfies $\sup_{x \in \R^d} f(x) \leq M$. 

\subsection{The Dual Program, Gap Formula and the Optimal Solutions}
\label{sec:dual-progr}

A convenient formulation of the dual of \eqref{primal-lp} is given by
\begin{align}
    \min         & \ \text{val}^\star(u) :=b^\T u+\sum_{i=1}^n(c-A^\T u)_i^+ \tag{Dual LP} \label{dual-lp} \\
	\text{s.t. } & u\geq 0. \nonumber
\end{align}
To keep the notation concise, we will often use the identity $\|(c-A^tu)^+\|_1
= \sum_{i=1}^n(c-A^\T u)_i^+$.

For any primal solution $x$ and dual solution $u$ to the above pair of programs,
we have the following standard formula for the primal-dual gap:
\begin{align}
\text{val}^\star(u) - \text{val}(x)
   &:= b^\T u + \sum_{i=1}^n (c-A^\T u)_i^+ - c^\T  x \label{eq:gap-formula} \tag{Gap Formula} \\
   &= (b-Ax)^\T  u + \left(\sum_{i=1}^n x_i(A^\T u-c)_i^+ + (1-x_i)(c-A^\T u)_i^+ \right). \nonumber
\end{align}

Throughout the rest of the paper, we let $x^*$ and $u^*$ denote primal and
dual optimal basic feasible solutions for~\eqref{primal-lp} and~\eqref{dual-lp}
respectively, which we note are unique with probability $1$. We use the notation
\begin{equation}
\label{eq:obj-ext-mat}
W := \begin{bmatrix}
	c^\T \\
	A
    \end{bmatrix} \in \R^{(m+1) \times n},
\end{equation}
to denote the objective extended constraint matrix. We will frequently make
use of the sets $N_b := \{i \in [n]: x_i^* = b\}$, $b \in \{0,1\}$, the $0$
and $1$ coordinates of $x^*$, and $S := \{i \in [n]: x_i^* \in (0,1)\}$, the
fractional coordinates of $x^*$. We will also use the fact that $|S| \leq m$,
which follows since $x^*$ is a basic solution to~\eqref{primal-lp} and $A$
has $m$ rows.

\subsection{Chernoff Bounds and Binomial Sums}

Let $X_1,\dots,X_n$ independent $\{0,1\}$ random variables with $\mu = \E[\sum_{i=1}^n X_i]$. Then, the Chernoff bound gives \cite[Corollary 1.10]{doerr}
\begin{align}
\Prob[\sum_{i=1}^n X_i \leq \mu(1-\epsilon)] &\leq e^{-\frac{\epsilon^2 \mu}{2}}, \epsilon \in [0,1]. \label{eq:chernoff} \\
\Prob[\sum_{i=1}^n X_i \geq \mu(1+\epsilon)] &\leq e^{-\frac{\epsilon^2 \mu}{3}}, \epsilon \in [0,1]. \nonumber
\end{align}

The same concentration holds for the size of the intersection of two random sets.
\begin{lemma}
\label{lem:intersection}
Let $K, K'$ be two i.i.d. random subsets of $[ak]$, such that $|K| = |K'| = k$ and where $\Prob[i \in K] = \frac{1}{a}$ for every $1 \leq i \leq ak$. Then for every $\epsilon \in (0, 1)$ we have
$$\Prob \left[ \lenfit{K \cap K'} \geq \frac{(1+\epsilon)k}{a} \right] \leq 2\exp \left(-\frac{k\epsilon^2}{3a} \right),$$
$$\Prob \left[ \lenfit{K \cap K'} \leq \frac{(1-\epsilon)k}{a} \right] \leq 2\exp \left(-\frac{k\epsilon^2}{2a} \right).$$
\end{lemma}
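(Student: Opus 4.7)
The plan is to condition on $K$ and reduce the problem to a Chernoff bound for a hypergeometric random variable. First, using the independence of $K$ and $K'$ together with $\Prob[i \in K] = \Prob[i \in K'] = 1/a$, linearity of expectation yields
$$\E[\len{K \cap K'}] = \sum_{i=1}^{ak}\Prob[i \in K]\Prob[i \in K'] = \frac{k}{a},$$
which identifies the correct center of concentration.

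Next, I would condition on an arbitrary realization of $K$ with $\len{K} = k$ and write $X := \len{K \cap K'} = \sum_{i \in K} \mathbf{1}[i \in K']$. Since $K'$ is sampled uniformly from the $k$-subsets of $[ak]$, the indicators $(\mathbf{1}[i \in K'])_{i \in [ak]}$ are drawn without replacement, so they form a collection of negatively associated Bernoulli$(1/a)$ random variables; equivalently, $X$ follows a hypergeometric distribution with mean $k/a$. A standard fact is that such a sum obeys the same multiplicative Chernoff inequalities as a sum of independent Bernoulli$(1/a)$ random variables with the same mean (by Hoeffding's classical reduction to the binomial case, or directly from negative association). Applying~\eqref{eq:chernoff} with $\mu = k/a$ therefore gives, conditionally on $K$,
$$\Prob\bigl[X \geq (1+\epsilon)\tfrac{k}{a} \,\big|\, K\bigr] \leq e^{-\epsilon^2 k / (3a)} \quad \text{and} \quad \Prob\bigl[X \leq (1-\epsilon)\tfrac{k}{a} \,\big|\, K\bigr] \leq e^{-\epsilon^2 k / (2a)}.$$

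Taking the expectation over $K$ absorbs the conditioning without altering either bound, after which the leading factor of $2$ in the target inequalities is simply slack (convenient for later union-bound arguments). The only step requiring outside justification is the validity of the Chernoff bound for the hypergeometric distribution, which I expect to be the only subtle point; everything else is a direct expectation computation plus an invocation of~\eqref{eq:chernoff}.
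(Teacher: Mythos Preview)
Your proposal is correct and follows essentially the same route as the paper: recognize that, conditional on $K$, the quantity $|K \cap K'|$ is hypergeometric with mean $k/a$, then invoke the standard Chernoff bound for the hypergeometric distribution (the paper simply cites \cite[Theorem~1.17]{doerr} for this last step). Your observation that the leading factor of $2$ is slack is also correct.
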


\begin{proof}
The set size $|K \cap K'|$ follows a hypergeometric distribution.
To see this, we let $K \subset [ak]$ denote the set of successes,
and we sample $|K'|$ elements from $[ak]$ without replacement.
Then $|K \cap K'|$ counts the number of successes.
The bound now follows directly from \cite[Theorem 1.17]{doerr}.\qed
\end{proof}

We will need the following standard upper bound on binomial sums. For $n \geq 1$
and $n/2 \leq k \leq n$, we have that
\begin{equation}
\label{eq:bin-sum}
|\{S \subseteq [n]: |S| \geq k\}| = \sum_{i=k}^n \binom{n}{i} \leq e^{nH(k/n)},
\end{equation}
where $H(x) = -x\log(x) - (1-x)\log(1-x)$, $x \in [0,1]$, is the base $e$
entropy function \cite[Theorem 3.1]{galvin_three_2014}. We recall that $H(x)$ is concave in $x$ and $H(x) =
H(1-x)$, and hence is maximized at $H(1/2) = \log 2$.

\subsection{Bounds on the Moment Generating Function}

\begin{lemma}
\label{lem:mgf-cosh}
Let $Z \in \R$ be a random variable satisfying $\E[Z]=0$ and $\E[e^{|Z|}] <
\infty$. Then, $\E[e^{Z}] \leq \E[\cosh(\sqrt{3/2}Z)]$, where $\cosh(x) :=
\frac{1}{2}(e^x+e^{-x}) = \sum_{k=0}^\infty \frac{x^{2k}}{(2k)!}$.
\end{lemma}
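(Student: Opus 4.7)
Our plan is to expand both sides as power series and compare term by term. Since $\E[e^{|Z|}] < \infty$, every moment $\E[Z^k]$ is finite, and dominated convergence lets us interchange expectation with the Taylor series of $e^z$ and $\cosh(\sqrt{3/2}\,z)$. Using $\E[Z]=0$ to cancel the linear term, the claim $\E[e^Z] \leq \E[\cosh(\sqrt{3/2}Z)]$ reduces to the moment comparison
\[
 \sum_{k \geq 1} \frac{\E[Z^{2k+1}]}{(2k+1)!} \;\leq\; \sum_{k \geq 1} \frac{\bigl((3/2)^k - 1\bigr)\,\E[Z^{2k}]}{(2k)!}.
\]

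To dominate each odd moment by neighboring even moments, we would apply Cauchy--Schwarz, $|\E[Z^{2k+1}]| \leq \sqrt{\E[Z^{2k}]\,\E[Z^{2k+2}]}$, followed by weighted AM--GM, $\sqrt{xy} \leq (\lambda_k x + y/\lambda_k)/2$, with parameters $\lambda_k > 0$ to be fixed. Summing, reindexing so that each $\E[Z^{2k+2}]$ contribution from index $k$ becomes an $\E[Z^{2k}]$ contribution at index $k+1$, and matching coefficient by coefficient against the right-hand side, the comparison reduces to exhibiting positive reals $\{\lambda_k\}_{k \geq 1}$ satisfying
\[
 \frac{\lambda_k}{2(2k+1)} \;+\; \frac{k\cdot\mathbf{1}_{\{k \geq 2\}}}{\lambda_{k-1}} \;\leq\; (3/2)^k - 1, \qquad \forall\, k \geq 1.
\]

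Such a sequence can be built greedily by saturating each constraint in turn: take $\lambda_1 := 3$, which meets the $k=1$ bound $\lambda_1/6 \leq 1/2$ with equality, and for $k \geq 2$ set $\lambda_k := 2(2k+1)\bigl[(3/2)^k - 1 - k/\lambda_{k-1}\bigr]$. A short induction, with the first two or three cases checked by direct arithmetic, shows the bracket stays strictly positive and that $\lambda_k$ grows like $2(2k+1)(3/2)^k$ asymptotically; consequently $k/\lambda_{k-1}$ decays geometrically like $O((2/3)^{k-1})$, which is dwarfed by the exponentially growing budget $(3/2)^k - 1$, so the recursion is well-defined and meets the required inequality at every $k$.

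The main obstacle is the recursive bookkeeping: one has to verify the initial constraints by hand and propagate a sufficient lower bound on $\lambda_{k-1}$ through the induction, rather than any deeper analytic difficulty. The constant $\sqrt{3/2}$ is tight for this scheme, as the $k=1$ constraint $\lambda_1/6 \leq (3/2)-1$ exactly admits $\lambda_1 = 3$, while any smaller constant would make the base case infeasible. Plugging the Cauchy--Schwarz/AM--GM bound back into the series comparison then yields the desired inequality $\E[e^Z] \leq \E[\cosh(\sqrt{3/2}Z)]$.
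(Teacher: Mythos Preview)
Your approach is essentially the same as the paper's: expand both sides as Taylor series, kill the linear term using $\E[Z]=0$, bound each odd moment by Cauchy--Schwarz and weighted AM--GM, reindex, and compare coefficients of $\E[Z^{2k}]$. The only difference is that where you build the AM--GM weights $\lambda_k$ greedily via a recursion, the paper simply takes $\lambda_k = 2k+1$, which reduces the verification to the single inequality $(3/2)^k \geq \tfrac{3}{2} + \tfrac{k}{2k-1}$ for $k \geq 2$ and avoids the inductive bookkeeping.
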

\begin{proof}
\begin{align*}
\E[e^{Z}] &= \sum_{k=0}^\infty \frac{\E[Z^k]}{k!} \quad \left(\text{ by dominated convergence }\right)\\
          &= 1 + \sum_{k=1}^\infty \frac{\E[Z^{2k}]}{(2k)!} + \sum_{k=1}^\infty \frac{\E[Z^{2k+1}]}{(2k+1)!} \quad \left(~\E[Z]=0~\right) \\
          &\leq 1 + \sum_{k=1}^\infty \frac{\E[Z^{2k}]}{(2k)!} + \sum_{k=1}^\infty \frac{(\E[Z^{2k}] \E[Z^{2k+2}])^{1/2}}{(2k+1)!} \quad \left(\text{ by H{\"o}lder's inequality }\right) \\
          &\leq 1 + \sum_{k=1}^\infty \frac{\E[Z^{2k}]}{(2k)!} + \sum_{k=1}^\infty \frac{(2k+1)\E[Z^{2k}] + \E[Z^{2k+2}]/(2k+1)}{2(2k+1)!} \quad \left(\text{ by AM-GM }\right) \\
          &= 1 + \frac{3}{2} \frac{\E[Z^2]}{2} + \sum_{k=2}^\infty \frac{\E[Z^{2k}]}{(2k)!}(1 + 1/2 + \frac{2k}{2(2k-1)}) \\
          &\leq \sum_{k=0}^\infty \frac{\E[(\sqrt{3/2}Z)^{2k}]}{(2k)!} = \E[\cosh(\sqrt{3/2}Z)].
\end{align*}
The last inequality follows by the fact that $(3/2)^k \geq 3/2 + \frac{k}{2k-1}$ whenever $k \geq 2$. \qed
\end{proof}

\subsection{Gaussian and Sub-Gaussian Random Variables}
\label{sec:subg}

The standard, mean zero and variance $1$, Gaussian $\mathcal{N}(0,1)$ has density
function $\varphi(x) := \frac{1}{\sqrt{2\pi}}e^{-x^2/2}$. A standard Gaussian
vector in $\R^d$, denoted $\mathcal{N}(0,I_d)$, has probability density
$\prod_{i=1}^d \varphi(x_i) = \frac{1}{\sqrt{2\pi}^d} e^{-\|x\|^2/2}$
for $x \in \R^d$. A random variable $Y \in \R$ is $\sigma$-sub-Gaussian if for all $\lambda \in \R$, we have
\begin{equation}
\label{eq:gauss-mgf}
\E[e^{\lambda Y}] \leq e^{\sigma^2\lambda^2/2}.
\end{equation}
A standard normal random variable $X \sim \mathcal{N}(0,1)$ is $1$-sub-Gaussian. If variables
$Y_1,\dots,Y_k \in \R$ are independent and respectively
$\sigma_i$-sub-Gaussian, $i \in [k]$, then $\sum_{i=1}^k Y_i$ is
$\sqrt{\sum_{i=1}^k \sigma_i^2}$-sub-Gaussian.

\noindent For a $\sigma$-sub-Gaussian random variable $Y \in \R$ we have the
following standard tailbound:
\begin{equation}
\label{eq:gauss-tail}
\max \{\Prob[Y \leq -\sigma s], \Prob[Y \geq \sigma s]\} \leq e^{-\frac{s^2}{2}}, s \geq 0.
\end{equation}
For $X \sim \mathcal{N}(0,I_d)$, we will use the following higher dimensional
analogue:
\begin{equation}
\label{eq:gauss-norm}
\Prob[\|X\|_2 \geq s\sqrt{d}] \leq e^{-\frac{d}{2}(s^2- 2\log s-1)} \leq e^{-\frac{d}{2}(s-1)^2}, s \geq 1.
\end{equation}
We will use this bound to show that the columns of $A$ corresponding to the
fractional coordinates in the (almost surely unique) optimal solution $x^*$ are
bounded.
\begin{lemma}
	\label{lem:basis_bounded}
	Letting $S := \{i \in [n]: x_i^* \in (0,1)\}$, we have that
\[
\Prob[\exists i\in S: \|A_{\cdot, i}\|_2\geq (4\sqrt{\log(n)}+\sqrt{m})]\leq n^{-7}.\]
\end{lemma}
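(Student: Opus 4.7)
The plan is to apply a union bound over all $n$ columns (not just those in $S$, since $S$ is a random set depending on $A$ and $c$), and for each fixed column use the Gaussian norm tail bound \eqref{eq:gauss-norm}. Since $A_{\cdot,i}\sim\mathcal{N}(0,I_m)$ for every $i\in[n]$ by assumption on $A$, the event $\{\exists i\in S:\|A_{\cdot,i}\|_2\geq 4\sqrt{\log n}+\sqrt m\}$ is contained in $\{\exists i\in[n]:\|A_{\cdot,i}\|_2\geq 4\sqrt{\log n}+\sqrt m\}$, so we may forget about the structure of $S$ entirely. This bypasses the only subtlety, which is that $S$ is data-dependent and hence one cannot directly invoke independence for ``the columns in $S$''.

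Concretely, I would set $s := 1+ 4\sqrt{\log n}/\sqrt m$ so that $s\sqrt m = \sqrt m + 4\sqrt{\log n}$ and $s\geq 1$. Then \eqref{eq:gauss-norm} applied to $X=A_{\cdot,i}\sim\mathcal{N}(0,I_m)$ gives
\[
\Pr\bracksfit{\|A_{\cdot,i}\|_2\geq \sqrt m+4\sqrt{\log n}} \leq e^{-\frac{m}{2}(s-1)^2} = e^{-\frac{m}{2}\cdot\frac{16\log n}{m}} = e^{-8\log n} = n^{-8}.
\]
A union bound over $i\in[n]$ then yields probability at most $n\cdot n^{-8}=n^{-7}$, as required.

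The only minor obstacle is the point already mentioned: $S$ is itself random and correlated with $A$, so one cannot directly talk about ``the distribution of $A_{\cdot,i}$ conditional on $i\in S$''. Handling this by passing to the cruder event indexed over all of $[n]$ is wasteful only by a factor of $n/|S|\leq n/m$ in the union bound, which is absorbed comfortably since the single-column tail is polynomially small of degree $8$ while we only need degree $7$.
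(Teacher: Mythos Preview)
Your proposal is correct and essentially identical to the paper's proof: both pass from $S$ to all of $[n]$ via inclusion, apply the tail bound~\eqref{eq:gauss-norm} with $s=1+4\sqrt{\log n}/\sqrt m$ to get a per-column probability of $n^{-8}$, and finish with a union bound over the $n$ columns. Your explicit remark on why one must union-bound over $[n]$ rather than $S$ is a nice clarification the paper leaves implicit.
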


\begin{proof}
Using the union bound over all $n$ columns and \Cref{eq:gauss-norm} we get that:
\begin{align*}
    \Prob[\exists i\in S: \|A_{\cdot, i}\|_2 \geq (4\sqrt{\log(n)}&+\sqrt{m})] \\ &\leq \Prob[\exists i\in [n]: \|A_{\cdot, i}\|_2\geq (4\sqrt{\log(n)}+\sqrt{m})] \\
&\leq n\Prob_{X\sim \mathcal{N}(0,I_d)}[\|X\|_2\geq (4\sqrt{\log(n)}+\sqrt{m})] \\
&\leq n\exp\left(-\frac{m}{2}(4\sqrt{\log (n)/m})^2\right)=n^{-7}. \quad \qed
\end{align*}
\end{proof}

We will need the following analogue of the Chernoff bound for
truncated Gaussian sums.

\begin{lemma}
\label{lem:gauss-chernoff}
Let $X_1,\dots,X_n$ be i.i.d. $\mathcal{N}(0,1)$. Then
\[
\Prob \left[ \lenfit{ \sum_{i=1}^n X_i^+-\frac{n}{\sqrt{2\pi}} } \geq \sqrt{2n}s \right] \leq 2e^{-s^2/2}, s \geq 0.
\]
\end{lemma}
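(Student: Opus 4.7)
The plan is to recognize $\sum_{i=1}^n X_i^+$ as a sum of i.i.d.\ random variables, center each summand by its mean, establish sub-Gaussianity of the centered summand, and then apply the sum rule together with the two-sided tail bound~\eqref{eq:gauss-tail}.

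First, I would verify that the centering constant $n/\sqrt{2\pi}$ appearing in the statement is indeed $\E[\sum_i X_i^+]$: a direct computation (substituting $u=x^2/2$) gives
\[
\E[X_i^+] \;=\; \int_0^\infty x\,\varphi(x)\,dx \;=\; \frac{1}{\sqrt{2\pi}}\int_0^\infty e^{-u}\,du \;=\; \frac{1}{\sqrt{2\pi}}.
\]
Let $Z_i := X_i^+ - 1/\sqrt{2\pi}$; the $Z_i$ are i.i.d., have mean $0$, and $\sum_i X_i^+ - n/\sqrt{2\pi} = \sum_i Z_i$.

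Second, I would show that each $Z_i$ is $\sqrt{2}$-sub-Gaussian. The cleanest route uses the identity $X_i^+ = (X_i + |X_i|)/2$, which gives
\[
Z_i \;=\; \tfrac{1}{2} X_i \;+\; \tfrac{1}{2}\bigl(|X_i| - \sqrt{2/\pi}\bigr).
\]
By Cauchy--Schwarz applied to the moment generating function,
\[
\E\bigl[e^{\lambda Z_i}\bigr] \;\le\; \sqrt{\,\E[e^{\lambda X_i}]\cdot \E\!\bigl[e^{\lambda(|X_i|-\sqrt{2/\pi})}\bigr]\,}.
\]
The first factor equals $e^{\lambda^2/2}$ since $X_i \sim \mathcal{N}(0,1)$ is $1$-sub-Gaussian. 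For the second factor, $x\mapsto |x|$ is $1$-Lipschitz, and by Gaussian concentration for $1$-Lipschitz functions of a standard Gaussian, $|X_i|-\E|X_i|=|X_i|-\sqrt{2/\pi}$ is also $1$-sub-Gaussian, so the second factor is at most $e^{\lambda^2/2}$ as well. Combined, $\E[e^{\lambda Z_i}]\le e^{\lambda^2/2}$, so $Z_i$ is in fact $1$-sub-Gaussian (already stronger than what we need).

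Third, by the independent-sum property recalled in subsection~\ref{sec:subg}, $\sum_{i=1}^n Z_i$ is $\sqrt{n}$-sub-Gaussian. Applying the two-sided version of~\eqref{eq:gauss-tail} with parameter $\sigma=\sqrt{n}$ and threshold $\sigma\cdot(\sqrt{2}\,s) = \sqrt{2n}\,s$, I get
\[
\Pr\!\left[\,\Bigl\lvert\sum_{i=1}^n X_i^+ - \tfrac{n}{\sqrt{2\pi}}\Bigr\rvert \ge \sqrt{2n}\,s\right]
\;\le\; 2\exp\!\bigl(-(\sqrt{2}\,s)^2/2\bigr) \;=\; 2 e^{-s^2} \;\le\; 2 e^{-s^2/2},
\]
which is the claimed inequality (with the extra factor of $2$ in the exponent to spare).

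\textbf{Main obstacle.} The only ingredient that is not completely elementary is the sub-Gaussianity of $|X_i|-\sqrt{2/\pi}$, which I invoked via Gaussian concentration for Lipschitz functions. If one wishes to keep the paper self-contained and avoid that black box, the alternative is to apply Lemma~\ref{lem:mgf-cosh} directly to $Z_i$: bounding $\E[Z_i^{2k}]$ by splitting on the sign of $X_i$ and using the standard absolute moment bound $\E[|X_i|^{2k}] = (2k-1)!! \le (2k)!/(2^k k!)$ produces the required estimate on $\E[\cosh(\sqrt{3/2}\,\lambda Z_i)]$ and yields a sub-Gaussian constant that still suffices to deliver the target tail bound $2e^{-s^2/2}$.
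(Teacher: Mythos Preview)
Your argument is correct and in fact yields a slightly sharper sub-Gaussian constant than the paper's (you get $Z_i$ to be $1$-sub-Gaussian, the paper only gets $\sqrt{2}$), but the route is genuinely different. The paper does not decompose $X^+$ as $(X+|X|)/2$ or use Cauchy--Schwarz on the MGF; instead it applies Lemma~\ref{lem:mgf-cosh} directly to $X^+-\mu$ and then uses the simple pointwise inequality $|X^+-\mu|\le |X-\mu|$ (valid since $\mu>0$) to replace $\cosh(\sqrt{3/2}\,\lambda(X^+-\mu))$ by $\cosh(\sqrt{3/2}\,\lambda(X-\mu))$, which is the MGF of a shifted Gaussian and is bounded elementarily. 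Your approach is cleaner in that it avoids the detour through $\cosh$, and gives a stronger final bound $2e^{-s^2}$; its cost is the reliance on Gaussian Lipschitz concentration for $|X|-\sqrt{2/\pi}$, which the paper has not set up. The paper's approach is entirely self-contained using only Lemma~\ref{lem:mgf-cosh} and the Gaussian MGF~\eqref{eq:gauss-mgf}. Your suggested fallback via Lemma~\ref{lem:mgf-cosh} is essentially what the paper does, though the paper's key step---the pointwise bound $|X^+-\mu|\le|X-\mu|$---is more direct than the moment computation you sketch.
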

\begin{proof}
For $X \sim \mathcal{N}(0,1)$, a direct computation yields $\mu := \E[X^+] =
\frac{1}{\sqrt{2\pi}}$. For $\lambda \in \R$, we get that
\begin{align*}
\E[e^{\lambda(X^+-\mu)}]
&\leq \E[\cosh(\sqrt{3/2}\lambda(X^+-\mu))] \quad \left(\text{ by~\Cref{lem:mgf-cosh} }\right) \\
&\leq \E[\cosh(\sqrt{3/2}\lambda(X-\mu))] \quad \left(~|X^+-\mu| \leq |X-\mu|~\right) \\
&\leq e^{\frac{3}{2} \lambda^2/2} \cosh(\sqrt{\frac{3}{2}} \lambda \mu)  \quad \left(\text{ by~\eqref{eq:gauss-mgf} }\right) \\
&\leq e^{(1+\mu^2)\frac{3}{4}\lambda^2} \quad \left(~\cosh(x) \leq e^{x^2/2}~\right) \\
&\leq e^{\lambda^2} \quad \left(~\mu^2 = \frac{1}{2\pi} \leq \frac{1}{3}~\right).
\end{align*}
By the above, we have that $X^+-\frac{1}{2\pi}$ is $\sqrt{2}$-sub-Gaussian.
Therefore, if $X_1,\dots,X_n$ are i.i.d. $\mathcal{N}(0,1)$, the random
variable $Y = (\sum_{i=1}^n X_i^+)-\frac{n}{\sqrt{2\pi}} = \sum_{i=1}^n
(X_i^+-\frac{1}{\sqrt{2\pi}})$ is $\sqrt{2n}$-sub-Gaussian. The desired result
now follows directly from the sub-Gaussian tail bound~\eqref{eq:gauss-tail}
and the union bound. \qed
\end{proof}

\subsection{Logconcave Distributions}
\label{sec:logcon}

A probability measure $\mu$ on $\R^d$ is logconcave $\mu$ admits a
probability density $f: \R^d \rightarrow \R_+$ such that $\log f: \R^d
\rightarrow \R \cup \{-\infty\}$ is concave. We say that a random vector $X
\in \R^d$ is logconcave if its distributed according to a logconcave
probability measure. Important examples of logconcave probability
distributions are the Gaussian distribution and the uniform distribution on a
compact convex set. 

Logconcave distributions have many useful analytical properties. In
particular, the marginals of logconcave random vectors are also logconcave. 

\begin{theorem}[\cite{Prekopa71}]
\label{thm:log-marg}
Let $X \in \R^d$ be a logconcave random vector. Then, for any surjective
linear transformation $T: \R^d \rightarrow \R^k$, $TX$ is a logconcave random vector. 
\end{theorem}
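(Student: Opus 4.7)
The plan is to reduce the statement to the fundamental marginalization property of logconcave functions, which itself follows from the Prékopa--Leindler inequality. Throughout, write $f: \R^d \to \R_+$ for the logconcave density of $X$, so that $\log f$ is concave (taking values in $\R \cup \{-\infty\}$) on $\R^d$.

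The first step is to reduce to the case where $T$ is the projection onto the first $k$ coordinates. Since $T: \R^d \to \R^k$ is surjective, one can pick a linear isomorphism $S: \R^d \to \R^d$ such that $T = P_k \circ S^{-1}$, where $P_k: \R^d \to \R^k$ sends $(y_1, \ldots, y_d)$ to $(y_1, \ldots, y_k)$. Then $TX = P_k(SX)$. A standard change-of-variables computation shows that $X' := SX$ has density $g(y) = |\det S|^{-1} f(S^{-1} y)$, and since $\log g(y) = -\log|\det S| + \log f(S^{-1} y)$ is concave as the composition of the concave function $\log f$ with an affine map, $SX$ is itself a logconcave random vector. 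So without loss of generality it suffices to prove that $P_k X$ is logconcave whenever $X$ is.

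Next, I would establish that the marginal density
\[
h(y) := \int_{\R^{d-k}} f(y, z)\, dz, \qquad y \in \R^k,
\]
is itself logconcave. For this, fix $y_0, y_1 \in \R^k$ and $\lambda \in [0,1]$, and apply the Prékopa--Leindler inequality in dimension $d-k$ to the three nonnegative functions $u_0(z) := f(y_0, z)$, $u_1(z) := f(y_1, z)$, and $u_\lambda(z) := f(\lambda y_0 + (1-\lambda) y_1, z)$. The logconcavity of $f$ gives the required pointwise hypothesis
\[
u_\lambda(\lambda z_0 + (1-\lambda) z_1) \geq u_0(z_0)^\lambda u_1(z_1)^{1-\lambda} \qquad \forall z_0, z_1 \in \R^{d-k},
\]
and the conclusion of Prékopa--Leindler reads $\int u_\lambda \geq \bigl(\int u_0\bigr)^\lambda \bigl(\int u_1\bigr)^{1-\lambda}$, which is exactly $h(\lambda y_0 + (1-\lambda) y_1) \geq h(y_0)^\lambda h(y_1)^{1-\lambda}$, i.e.\ the logconcavity of $h$.

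The main obstacle is the Prékopa--Leindler inequality itself, which is nontrivial and is classically proved by induction on dimension, reducing to the one-dimensional Brunn--Minkowski inequality together with a Fubini-type slicing argument. Since the theorem is cited to Prékopa's 1971 paper, I would treat Prékopa--Leindler as a black box; the remaining contribution is only the elementary reduction to coordinate projections and the direct application above.
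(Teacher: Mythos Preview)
Your argument is the standard and correct route: reduce to a coordinate projection via an invertible linear change of variables, then apply the Pr\'ekopa--Leindler inequality to show that marginals of logconcave densities are logconcave. The paper itself does not prove this theorem at all; it is quoted from Pr\'ekopa's 1971 paper and used as a black box, so there is nothing to compare against beyond noting that your sketch matches the classical proof.

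One small notational slip: you write $T = P_k \circ S^{-1}$ and then immediately $TX = P_k(SX)$; these are inconsistent. You want $T = P_k \circ S$ (or else replace $SX$ by $S^{-1}X$ throughout). With that fix the change-of-variables density formula $g(y) = |\det S|^{-1} f(S^{-1}y)$ and the rest of the argument go through unchanged.
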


The following theorem, which combines results from~\cite{LV07,Fradelizi99},
yields important properties of 1 dimensional logconcave distributions that we
will need. 

\begin{theorem}
\label{thm:log-main}
Let $\omega \in \R$ be a logconcave random variable with $\Var[\omega] = 1$.
\begin{itemize}
\item~\cite[Lemma 5.7]{LV07}: $\Pr[|\omega-\E[\omega]| \geq s] \leq
e^{-s+1}$, $\forall s \geq 0$.
\item~\cite[Theorem 4]{Fradelizi99}: $\omega$ has maximum density at most $1$.
\end{itemize}
\end{theorem}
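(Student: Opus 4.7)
The statement collects two classical properties of one-dimensional log-concave distributions; both are attributed to prior work (\cite{LV07,Fradelizi99}) and the paper invokes them by citation rather than reproving them. A self-contained proof proposal for each bullet would run as follows.

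For the exponential tail bound, I would first translate so that $\mathbb{E}[\omega]=0$ (log-concavity and variance are translation invariant). The main structural input is that the tail function $T(t):=\Pr[\omega\geq t]$ is itself log-concave as a function of $t$: this follows because for a log-concave density $f$ the hazard rate $f(t)/T(t)$ is non-decreasing in $t$, which is a standard consequence of Prekopa's theorem applied to $\int_t^\infty f(x)\,dx$, exactly analogous to \Cref{thm:log-marg}. From log-concavity of $T$ and $T(0)\leq 1$, we get $\log T(t)\leq -\beta t$ for some slope $\beta\geq 0$ on the interval where $T$ is positive. The variance constraint $\int x^2 f(x)\,dx=1$ then forces the tail to decay fast enough, and combining the two-sided decay of $T$ with an elementary integration-by-parts calculation (relating $\int t T(t)\,dt$ to $\mathbb{E}[\omega^+]$ and $\int t^2 T(t)\,dt$ to the second moment) pins down $\beta$ up to a constant, yielding $\log T(t)\leq 1-t$, hence $\Pr[\omega\geq t]\leq e^{-t+1}$. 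The symmetric argument on the lower tail, combined with a union bound, gives the two-sided statement.

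For the maximum-density bound, I would follow Fradelizi's extremal approach: among log-concave densities on $\R$ with $\Var=1$, characterize the one maximizing $f$ at its mode. Place the mode at $m$ and note that by log-concavity we can bound $f(x)\leq f(m)e^{-\alpha(x-m)}$ on one side and $f(x)\leq f(m)e^{-\alpha'(m-x)}$ on the other, where the slopes are free parameters. Normalizing $\int f=1$ and computing $\Var$ explicitly in terms of $M:=f(m)$, $\alpha,\alpha'$, and then minimizing the variance over $\alpha,\alpha'$ for fixed $M$ shows that the infimum of $\Var$ is exactly $1/M^2$, with the extremizer being the shifted one-sided exponential. Hence $\Var=1$ forces $M\leq 1$.

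The main obstacle is the sharpness of the constant $1$ in the second bullet: the bound is tight, with equality attained at the shifted exponential, so the proof cannot afford any slack. Fradelizi obtains this via a careful symmetrization/reflection reduction to the two-sided exponential extremal family, and faithfully reproducing that sharpness is the delicate technical step. The first bullet is less sharp (the constant $e$ in front is lossy) and so is easier: any proof that delivers \emph{some} exponential tail of rate $1$ suffices. Since the paper needs only these two consequences and not the proofs themselves, I would simply cite \cite{LV07,Fradelizi99} and move on, exactly as the authors do.
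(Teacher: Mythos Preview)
Your proposal is correct and matches the paper exactly: the paper states \Cref{thm:log-main} with citations to \cite{LV07} and \cite{Fradelizi99} and provides no proof whatsoever, so your recommendation to ``simply cite and move on'' is precisely what the authors do. Your additional proof sketches go beyond what the paper contains (and are reasonable outlines, though the derivation of the specific constant in the tail bound from the log-concavity of $T$ and the variance constraint is left rather vague), but since you correctly flag that the paper does not need them, there is nothing to correct.
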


\subsection{A Local Limit Theorem}
\label{sec:local-limit}

We now introduce the formalization of Gaussian convergence used
in~\Cref{lemma:disc}. 

\begin{definition}
Suppose $X_1, X_2, \ldots $ is a sequence of i.i.d copies of a random variable
$X$ with density $f$. For $k_0 \in \N$, $\gamma \geq 0$, we define $X$ to be $(\gamma, k_0)$-Gaussian convergent if
the density $f_n$ of $\sum\limits_{i=1}^n X_i / \sqrt{n}$ satisfies:
$$\len{f_n(x) -  \varphi(x)} \leq \frac{\gamma}{n} \quad \forall x \in \R, n
\geq k_0,$$
where $\varphi := \frac{1}{\sqrt{2\pi}}e^{-x^2/2}$ is the probability density function of the standard Gaussian.
\end{definition}

The above definition quantifies the speed of convergence in the context of
the central limit theorem. The rounding strategy used to obtain the main result utilizes random variables that are the weighted sum of a uniform and an independent normal variable. Crucially, the given convergence estimate will hold
for these random variables:

\begin{lemma}
\label{lem:unif-gaus-conv}
Let $U$ be uniform on $[-\sqrt{3},\sqrt{3}]$ and let $Z \sim \mathcal{N}(0,1)$. Then
there exists a universal constant $k_0 \geq 1$ such that $\forall
\epsilon \in [0,1]$, the random variable $\sqrt{\epsilon} U +
\sqrt{1-\epsilon} Z$ is $(1/10,k_0)$-Gaussian convergent and has maximum density at most $1$.
\end{lemma}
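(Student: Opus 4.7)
The plan is to handle the two claims of the lemma separately.

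For the maximum density bound, I first check that $U$ uniform on $[-\sqrt 3,\sqrt 3]$ has variance $\tfrac{(2\sqrt 3)^2}{12}=1$, so $\Var(X_\epsilon)=\epsilon\Var(U)+(1-\epsilon)\Var(Z)=1$. Since $U$ and $Z$ are each logconcave, the joint density of $(\sqrt{\epsilon}\,U,\sqrt{1-\epsilon}\,Z)$ on $\R^2$ is logconcave, and by \Cref{thm:log-marg} applied to the surjective linear map $(x,y)\mapsto x+y$, $X_\epsilon$ is itself logconcave. Fradelizi's bound in \Cref{thm:log-main} then gives the maximum density $\le 1$.

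For Gaussian convergence, my approach is Fourier inversion combined with a fourth-order local CLT / Edgeworth expansion. The characteristic function of $X_\epsilon$ is $\phi_\epsilon(t)=\tfrac{\sin(\sqrt{3\epsilon}\,t)}{\sqrt{3\epsilon}\,t}\,e^{-(1-\epsilon)t^2/2}$, and the density $f_n$ of $\tfrac{1}{\sqrt n}\sum_{i=1}^n X_\epsilon^{(i)}$ satisfies
\[
|f_n(x)-\varphi(x)|\;\le\;\tfrac{1}{2\pi}\int_\R \bigl|\phi_\epsilon(t/\sqrt n)^n - e^{-t^2/2}\bigr|\,dt
\]
uniformly in $x$. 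The crucial structural point is that $X_\epsilon$ is symmetric, so the third cumulant vanishes, killing the $1/\sqrt n$ Edgeworth correction and producing an $O(1/n)$ rate. The fourth cumulant, computed from $E[X_\epsilon^4]=\tfrac{9}{5}\epsilon^2+6\epsilon(1-\epsilon)+3(1-\epsilon)^2$, equals $\kappa_4(\epsilon)=-\tfrac{6}{5}\epsilon^2\in[-\tfrac{6}{5},0]$, bounded uniformly in $\epsilon$. I would split the integral at $|t|\le n^{1/4}$, $n^{1/4}<|t|\le\delta\sqrt n$, and $|t|>\delta\sqrt n$. In the first region, the Taylor expansion $\log\phi_\epsilon(s)=-s^2/2+\tfrac{\kappa_4(\epsilon)}{24}s^4+O(s^6)$ at $s=0$ gives the pointwise estimate $f_n(x)=\varphi(x)+\tfrac{\kappa_4(\epsilon)}{24n}H_4(x)\varphi(x)+O(1/n^2)$; since $\sup_x|H_4(x)\varphi(x)|=3/\sqrt{2\pi}$ and $|\kappa_4|\le 6/5$, the main term is at most $\tfrac{3}{20\sqrt{2\pi}\,n}<\tfrac{1}{15 n}$ in sup norm, comfortably below the target $1/(10n)$. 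In the other two regions the integrand decays at least as $e^{-c\sqrt n}$ for suitable absolute $c,\delta>0$, giving an $o(1/n)$ contribution.

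The main obstacle will be making all constants uniform in $\epsilon\in[0,1]$. The off-origin decay of $|\phi_\epsilon(s)|$ comes from two different mechanisms: $e^{-(1-\epsilon)s^2/2}$ dominates when $\epsilon$ is near $0$, while $(\sqrt{3\epsilon}|s|)^{-1}$ dominates when $\epsilon$ is near $1$. I would combine them via the pointwise bound $|\phi_\epsilon(s)|\le\min\{e^{-(1-\epsilon)s^2/2},(\sqrt{3\epsilon}|s|)^{-1}\}$ and verify that its supremum over $\epsilon\in[0,1]$ on $\{|s|\ge\delta\}$ stays strictly below $1$ by an absolute amount. Similarly, I would check that the Taylor remainders in the inner region are controlled by sixth moments of $X_\epsilon$, which are polynomials in $\epsilon$ uniformly bounded on $[0,1]$. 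Once these uniform estimates are in place, $k_0$ is chosen large enough that the $o(1/n)$ terms are absorbed into the target $1/(10n)$ for every $\epsilon\in[0,1]$ and every $n\ge k_0$.
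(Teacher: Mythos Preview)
Your proposal is correct and follows essentially the same route as the paper, with two minor implementation differences worth noting. For the density bound, the paper argues directly: since $\max\{\sqrt{\epsilon},\sqrt{1-\epsilon}\}\ge 1/\sqrt 2$, the density of $X_\epsilon$ is at most $\min\{\tfrac{1}{2\sqrt{3\epsilon}},\tfrac{1}{\sqrt{2\pi(1-\epsilon)}}\}\le 1$ via the fact that convolution does not increase the maximum density; your logconcavity-plus-Fradelizi argument is a clean alternative that avoids the case split. For Gaussian convergence, the paper packages your entire Fourier-inversion analysis into a single citation of a local limit theorem (Feller, Theorem~XVI.2.2, stated here as~\Cref{thm:local limit}): it checks $\E[X]=\E[X^3]=0$, $\E[X^2]=1$, computes $\mu_4$, and establishes the uniform characteristic-function decay $|\phi_\epsilon(s)|\le \tfrac{10}{1+|s|}$ (combining exactly the two mechanisms you identify), so that the theorem's constants depend only on $(\alpha,\beta)=(1,10)$ and hence $k_0$ is uniform in $\epsilon$. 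Your explicit three-region splitting is precisely how that theorem is proved, so the content is the same; the paper just outsources the analysis rather than redoing it.
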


We note that the $O(1/n)$-convergence rate to Gaussian achieved above is due
to the first $3$ moments of $X := \sqrt{\epsilon} U + \sqrt{1-\epsilon} Z$
matching those of the standard Gaussian, namely $\E[X]=\E[X^3]=0$ and
$\E[X^2]=1$, and the fact that the characteristic function $c(s) :=
\E[e^{isX}]$ decays like $1/|s|$. More generally, if the first $l \geq
2$ moments match and the characteristic function decays quickly enough, the
convergence rate is $O(1/n^{(l-1)/2})$.
The proof follows from the following local limit theorem (a special case of~\cite[Theorem XVI.2.2]{Feller08}):

\begin{theorem}
\label{thm:local limit}
Let $X_1, X_2, \ldots$ be a sequence of i.i.d.~random variables with density
having moments $\E[X_1] = 0, \E[X_1^2] = 1,
\E[X_1^3] = 0$, $\E[X_1^4] = \mu_4 > 0$. Also, assume that we have $|\E[e^{isX}]| \leq
\beta/(|s|+1)^{\alpha}, \forall s \in \R$ for some $\beta, \alpha > 0$. Define $S_n =
\frac{1}{\sqrt{n}} \sum_{i=1}^n X_i$ and $\varphi(x) = \frac{1}{\sqrt{2\pi}}
\exp (-x^2/2).$ Then, for all $n \geq 1$, $S_n$ admits a density $f_n$ satisfying
$$f_n(x) = \varphi(x)\left[1 + \frac{\mu_4-3}{24n}(x^4-6x^2+3)\right] +  o\left(\frac{C_{\alpha,\beta}}{n}\right), \forall x \in \R,$$
where $C_{\alpha,\beta}$ depends only on $\alpha,\beta$.
\end{theorem}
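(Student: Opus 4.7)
\smallskip
\noindent\textbf{Proof proposal.} The statement is a classical local central limit theorem with the first Edgeworth correction, and I would prove it in the Fourier-inversion style of Feller: express $f_n$ as the inverse Fourier transform of the characteristic function of $S_n$, expand that characteristic function around the origin using the moment hypotheses, and deploy the decay assumption $|\psi(s)|\leq \beta/(1+|s|)^{\alpha}$ to kill the rest of the inversion integral. Throughout, write $\psi(t):=\E[e^{itX_1}]$.

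\smallskip
\noindent\textbf{Step 1 (Fourier inversion).} Since $|\psi|\leq 1$ and $|\psi(s)|\leq \beta/(1+|s|)^{\alpha}$, the map $s\mapsto \psi(s/\sqrt{n})^{n}$ is absolutely integrable whenever $n\alpha>1$; the finitely many exceptional small $n$ are absorbed into the constant $C_{\alpha,\beta}$. Fourier inversion then gives
\[
f_n(x)=\frac{1}{2\pi}\int_{\R}e^{-isx}\,\psi(s/\sqrt{n})^{n}\,ds,
\]
which I split into a central piece $|s|\leq \delta\sqrt{n}$, a moderate annulus $\delta\sqrt{n}<|s|\leq \rho_0\sqrt{n}$, and a far tail $|s|>\rho_0\sqrt{n}$, for constants $0<\delta\leq \rho_0$ fixed in Step~3.

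\smallskip
\noindent\textbf{Step 2 (Central Edgeworth expansion).} The prescribed moments yield cumulants $\kappa_1=\kappa_3=0$, $\kappa_2=1$, $\kappa_4=\mu_4-3$, so
\[
\log\psi(t)=-\tfrac{t^{2}}{2}+\tfrac{(it)^{4}}{24}(\mu_4-3)+O(|t|^{5})
\]
in a neighbourhood of $0$. Substituting $t=s/\sqrt{n}$ and exponentiating,
\[
\psi(s/\sqrt{n})^{n}=e^{-s^{2}/2}\!\left(1+\tfrac{(\mu_4-3)s^{4}}{24n}\right)+e^{-s^{2}/2}\cdot O\!\left(\tfrac{|s|^{5}+s^{8}}{n^{3/2}}\right),
\]
uniformly on $|s|\leq \delta\sqrt{n}$. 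Integrating this termwise against $e^{-isx}/(2\pi)$ and using the Hermite identity $\tfrac{1}{2\pi}\int_{\R}e^{-isx}s^{4}e^{-s^{2}/2}\,ds=\varphi(x)(x^{4}-6x^{2}+3)$ produces the target main term $\varphi(x)[1+\tfrac{\mu_4-3}{24n}(x^{4}-6x^{2}+3)]$, with a Gaussian-weighted remainder whose $L^{1}(ds)$ norm is $O(1/n^{3/2})$, hence bounded pointwise in $x$ by a constant times $n^{-3/2}=o(1/n)$.

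\smallskip
\noindent\textbf{Step 3 (Tail estimates and main obstacle).} It remains to bound the integral over $|s|>\delta\sqrt{n}$ by $o(C_{\alpha,\beta}/n)$ uniformly in $x$. Since $\psi(0)=1$ and $\psi''(0)=-1$, the identity $|\psi(t)|^{2}=\psi(t)\psi(-t)$ together with Taylor expansion gives $|\psi(t)|\leq 1-t^{2}/3\leq e^{-t^{2}/3}$ on some neighbourhood $|t|\leq \rho_0$; choosing $\delta\leq\rho_0$, this yields $|\psi(s/\sqrt{n})|^{n}\leq e^{-s^{2}/3}$ on the moderate annulus and a contribution of $O(e^{-c\delta^{2}n})$ from that region. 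On the far tail the decay hypothesis furnishes $r:=\sup_{|t|\geq \rho_0}|\psi(t)|\leq \beta/(1+\rho_0)^{\alpha}<1$ for $\rho_0$ large enough, and the estimate $|\psi(s/\sqrt{n})|^{n}\leq r^{n/2}\cdot (\beta/(1+|s|/\sqrt{n})^{\alpha})^{n/2}$ integrates (once $n\alpha/2>1$) to a quantity decaying super-polynomially in $n$. The chief technical hurdle is producing $\rho_0>0$ and the uniform quadratic upper bound $|\psi(t)|\leq e^{-t^{2}/3}$ on $|t|\leq \rho_0$ with $\rho_0$ controlled solely in terms of $\alpha$ and $\beta$; this is precisely where the constant $C_{\alpha,\beta}$ acquires its dependence on the hypotheses, and once this neighbourhood is secured the three contributions combine cleanly to yield the stated expansion.
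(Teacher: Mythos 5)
The paper does not give its own proof of this theorem: it is cited directly as a special case of Feller's Theorem XVI.2.2, and your proposal is precisely the Fourier-inversion/Edgeworth argument by which Feller proves that result, so the approach coincides with the paper's implicit route.

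Two minor remarks, neither a genuine gap. In Step 2 only four moments are assumed, so the Taylor remainder of $\log\psi(t)$ is $o(t^4)$ (Peano form), not $O(|t|^5)$; substituting $t=s/\sqrt n$ and integrating against $e^{-s^2/2}$ over $|s|\le\delta\sqrt n$ still gives a contribution that is $o(1/n)$, which is all the statement requires, but your claimed $O(n^{-3/2})$ rate would need a fifth moment. Your concern in Step 3 about obtaining $\rho_0$ and the resulting constant from $\alpha,\beta$ alone is legitimate: the neighbourhood on which $|\psi(t)|\le e^{-t^2/3}$ holds is governed by the local behaviour of $\psi$ near $0$ (effectively by $\mu_4$ and the modulus of continuity of the fourth derivative), not by the tail parameters. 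Feller's theorem is an asymptotic statement for a fixed distribution, and the ``$C_{\alpha,\beta}$'' inside the $o(\cdot)$ in the paper's formulation should be read as informal bookkeeping rather than as a uniformity claim; in the paper's application (\Cref{lem:unif-gaus-conv}) the remaining moment data is uniformly bounded over $\epsilon\in[0,1]$, so nothing breaks.
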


\begin{proof}[\Cref{lem:unif-gaus-conv}]
Letting $X = \sqrt{\epsilon} U + \sqrt{1-\epsilon} Z$, a
straightforward calculation yields $\E[X]=\E[X^3]=0$, $\E[X^2]=1$ and
$\mu_4 := \E[X^4] = \epsilon^2 9/5 + 6(1-\epsilon)\epsilon + (1-\epsilon)^2
3$. Since convolution does not increase the maximum density, as $\max \{\sqrt{\epsilon},\sqrt{1-\epsilon} \} \geq 1/\sqrt{2}$, we see that the maximum density of $\sqrt{\epsilon} U + \sqrt{1-\epsilon}Z$ is upper bounded by $\min \{ \frac{1}{2\sqrt{3 \epsilon}}, \frac{1}{\sqrt{2\pi(1-\epsilon)}} \} \leq 1$ , where the terms correspond to the maximum density of $\sqrt{\epsilon} U$ and $\sqrt{1-\epsilon} Z$ respectively. Further,
\begin{align*}
|\E[e^{is(\sqrt{\epsilon} U + \sqrt{1-\epsilon} Z)}]| &=
|\frac{\sin(\sqrt{3\epsilon}s)}{\sqrt{3\epsilon}s}e^{-(1-\epsilon)s^2/2}| \leq \frac{2}{1+\sqrt{3\epsilon}|s|}\cdot\frac{1}{1+(1-\epsilon)s^2/2} \\ &\leq \frac{10}{1+|s|}.
\end{align*}
From here, $\max_{x \in \R} |\varphi(x)\frac{1}{24}(x^4-6x^2+3)|$ is
maximized at $x=0$ attaining value $\varphi(0)\frac{3}{24} =
\frac{1}{8\sqrt{2\pi}}$. Let $\gamma' := \frac{6}{40\sqrt{2\pi}} \geq
|\frac{(\mu_4-3)}{8\sqrt{2\pi}}|$. Applying~\Cref{thm:local limit}
with $\beta = 10,\alpha = 1$, one can choose $k_0 = O_{\alpha,\beta}(1)$
large enough so that the $o\left(C_{\alpha,\beta}/n\right)$ term is at most
$\gamma'/n$ for all $n \geq k_0$. The lemma now follows letting $\gamma := 1/10 \geq 2\gamma'$. \qed
\end{proof}

\subsection{Nets}

Let $\mathbb{S}^{d-1} = \{x \in \R^d: \|x\|_2=1\}$ denote the unit sphere in $\R^d$.
We say that $N \subseteq \mathbb{S}^{d-1}$ is an $\epsilon$-net if for every $x \in \mathbb{S}^{d-1}$
there exists $y \in N$ such that $\|x-y\| \leq \epsilon$. A classic
result we will need is that $S^{d-1}$ admits an $\epsilon$-net $N_\epsilon$ of
size $|N_\epsilon| \leq (1+\frac{2}{\epsilon})^d$. 
See, for example, \cite[Chapter 5]{vershynin}. We note that the same bound holds if we wish to construct a net of some subset $A \subseteq \mathbb{S}^{d-1}$ and we wish to have $N_\epsilon \subseteq A$.

\subsection{Rounding to Binary Solutions}
In the proof of \Cref{thm:ipgap_main_result}, we will take our optimal solution
$x^*$ and round it to an integer solution $x'$, by changing the fractional
coordinates. Note that as $x^*$ is a basic solution, it has at most $m$
fractional coordinates. One could round to a integral solution by setting all
of them to $0$, i.e., $x'=\lfloor x^*\rfloor$. If we assume that the Euclidean
norm of every column of $A$ is bounded by $C$, then we have
$\|A(x^*-x')\|_2 \leq mC$, since $x^*$ has at most $m$ fractional variables.
However, by using randomized rounding we can make this
bound smaller, as stated in the next lemma. We use this to obtain smaller
polynomial dependence in \Cref{thm:ipgap_main_result}.

\begin{lemma}
Consider an $m\times n$ matrix $A$ with $\|A_{\cdot, i}\|_2 \leq C$ for all $i\in [m]$ and
$y\in [0,1]^n$. Let $S = \{i \in [n]: y_i \in (0,1)\}$. There exists a vector $y'\in \{0,1\}^n$ with $\|A(y-y')\|_2 \leq C\sqrt{|S|}/2$ and $y'_i=y_i$ for all $i\notin S$.
\label{lem:round_integral_solution}
\end{lemma}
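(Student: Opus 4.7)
The plan is to use a standard independent randomized rounding argument and extract the desired bound via the probabilistic method. Define a random vector $y' \in \{0,1\}^n$ by setting $y'_i := y_i$ for every $i \notin S$ (these are already in $\{0,1\}$ by definition of $S$), and independently setting $y'_i := 1$ with probability $y_i$ and $y'_i := 0$ with probability $1-y_i$ for each $i \in S$. By construction $y' \in \{0,1\}^n$ and $y'_i = y_i$ on $[n]\setminus S$, so only the norm bound needs verification.

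Write $\xi_i := y_i - y'_i$ and note that $\xi_i = 0$ for $i \notin S$, while for $i \in S$ the variables $\xi_i$ are independent with $\E[\xi_i] = 0$ and $\Var[\xi_i] = y_i(1-y_i) \le 1/4$. Then $A(y-y') = \sum_{i \in S} \xi_i\, A_{\cdot,i}$, so taking expectations and using independence coordinate-wise,
\begin{align*}
\E\bigl[\|A(y-y')\|_2^2\bigr] = \sum_{i \in S} \Var[\xi_i]\, \|A_{\cdot,i}\|_2^2 \le \sum_{i \in S} \tfrac{1}{4} C^2 = \tfrac{C^2 |S|}{4}.
\end{align*}

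By the probabilistic method, there exists a realization of $y'$ with $\|A(y-y')\|_2^2 \le C^2 |S|/4$, hence $\|A(y-y')\|_2 \le C\sqrt{|S|}/2$, as required. There is essentially no obstacle here: the only mild point to note is that the independence of the $\xi_i$ is what kills the cross terms in the expansion of $\|\sum_i \xi_i A_{\cdot,i}\|_2^2$, and that $y_i(1-y_i) \le 1/4$ is sharp at $y_i = 1/2$, which is where the factor $1/2$ in the bound comes from.
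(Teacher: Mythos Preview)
Your proof is correct and is essentially identical to the paper's: both use independent randomized rounding with $\Pr[y'_i=1]=y_i$, compute $\E[\|A(y-y')\|_2^2]=\sum_{i\in S}\Var[\xi_i]\,\|A_{\cdot,i}\|_2^2\le C^2|S|/4$ via independence and $y_i(1-y_i)\le 1/4$, and conclude by the probabilistic method. The only cosmetic difference is that the paper passes through $\E[\|A(y-y')\|_2]\le\sqrt{\E[\|A(y-y')\|_2^2]}$ before extracting a good realization, whereas you extract it directly from the squared norm.
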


\begin{proof}
	Let $Y$ be the random variable in $\{0,1\}^n$ with independent components such that $\E(Y)=y$.
	Note that this implies that $\Var(Y_i)\leq 1/4$ for all $i$ and $\Var(Y_i)=0$ for $i\notin S$. Then:
	\begin{align*}
		\E(\|A(y-Y)\|_2)^2&\leq \E(\|A(y-Y)\|_2^2)= \sum_{i=1}^n\|A_{\cdot, i}\|_2^2\Var(Y_i)\\&\leq \sum_{i\in S}C^2\Var(Y_i) \leq \frac{C^2|S|}{4}
	\end{align*}
	So $\E(\|A(y-Y)\|_2)\leq C\sqrt{|S|}/2$, which directly implies the existence of a  value $y'\in \{0,1\}^n$ with $\|A(y-y')\|_2 \leq C\sqrt{|S|}/2$.\qed
\end{proof}

\section{Properties of the Optimal Solutions}
\label{sec:properties_optimal_solutions}
The following lemma is the main result of this section, which gives principal
properties we will need of the optimal primal and dual LP solutions. 
Namely, we prove an upper bound on the norm of the optimal dual solution
$u^*$ and a lower bound on the number of zero coordinates of the optimal
primal solution $x^*$.

\begin{lemma}
\label{lem:primal-dual-props}
Given $\delta :=
\frac{\sqrt{2\pi}}{n}\|b^-\|_2 \in [0,1/2)$, $\epsilon \in (0,1/5)$,
let $x^*, u^*$ denote the optimal primal and dual LP solutions, and let $\alpha :=
\frac{1}{\sqrt{2\pi}} \sqrt{\left( \frac{1-3\epsilon}{1-\epsilon} \right)^2-\delta^2}$ and choose $\beta \in [1/2,1]$ with $H(\beta) =
\frac{\alpha^2}{4}$. Then, with probability at least
$1-2\left(1+\frac{2}{\epsilon}\right)^{m+1} e^{-\frac{\epsilon^2
n}{8\pi}}-e^{-\frac{\alpha^2n}{4}}$, the following holds:
\begin{enumerate}
\item $c^\T x^* \geq \alpha n$.
\item $\|u^*\|_2 \leq \frac{1+\epsilon}{1-3\epsilon-(1-\epsilon)\delta}$.
\item $|\{i \in [n]: x^*_i = 0\}| \geq (1-\beta)n-m$.
\end{enumerate}
\end{lemma}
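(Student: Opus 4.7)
The plan is to reduce the lemma to uniform concentration of $f(v) := \sum_{i=1}^n (W_i^\T v)^+$ over unit vectors $v \in \R^{m+1}$, after which items 1 and 2 follow from LP duality plus a short calculus exercise and item 3 requires one additional Gaussian union bound. Since $W$ has i.i.d.\ $\mathcal{N}(0,1)$ entries, $W_i^\T v \sim \mathcal{N}(0,1)$ is i.i.d.\ across $i$ for any unit $v$; applying \Cref{lem:gauss-chernoff} with $s := \epsilon\sqrt{n}/(2\sqrt{\pi})$ then yields the pointwise bound $|f(v) - n/\sqrt{2\pi}| \leq \epsilon n/\sqrt{2\pi}$ with failure probability at most $2 e^{-\epsilon^2 n/(8\pi)}$. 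This connects to the dual objective through the identity $\sum_i (c-A^\T u)_i^+ = \sqrt{1+\|u\|_2^2}\,f(\tilde u/\|\tilde u\|_2)$ for $\tilde u := (1,-u) \in \R^{m+1}$.

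To upgrade the pointwise estimate to a uniform one, I would fix a standard $\epsilon$-net $N_\epsilon \subseteq \mathbb{S}^m$ of size at most $(1+2/\epsilon)^{m+1}$ and union-bound the concentration bound over it, contributing the first failure term. The key step is extending from $N_\epsilon$ to all of $\mathbb{S}^m$: since $f$ is the support function of the convex set $W \cdot [0,1]^n$, it is sublinear ($f(v+v') \leq f(v) + f(v')$ and $f(\lambda v) = \lambda f(v)$ for $\lambda \geq 0$). Writing $L := \sup_{w\in\mathbb{S}^m} f(w)$ and picking $v_0 \in N_\epsilon$ with $\|v-v_0\|_2 \leq \epsilon$, sublinearity yields $f(v) \leq f(v_0) + \epsilon L$, hence the self-bounding inequality $L \leq \max_{v_0\in N_\epsilon} f(v_0) + \epsilon L$ and therefore $L \leq \frac{(1+\epsilon)n}{(1-\epsilon)\sqrt{2\pi}}$. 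The symmetric inequality $f(v) \geq f(v_0) - \epsilon L$ then gives the matching lower bound $f(v) \geq \frac{(1-3\epsilon)n}{(1-\epsilon)\sqrt{2\pi}}$ after simplifying $(1-\epsilon)^2 - \epsilon(1+\epsilon) = 1-3\epsilon$.

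Items 1 and 2 drop out of LP duality. Setting $K := (1-3\epsilon)/(1-\epsilon)$ and using $b^\T u \geq -\|b^-\|_2\|u\|_2 = -(\delta n/\sqrt{2\pi})\|u\|_2$, one obtains $\val^\star(u) \geq (n/\sqrt{2\pi})(-\delta\|u\|_2 + K\sqrt{1+\|u\|_2^2})$ for all $u \geq 0$. Minimizing the RHS by setting the derivative in $\|u\|_2$ to zero (the critical point is $\|u\|_2 = \delta/\sqrt{K^2-\delta^2}$) gives minimum value $\sqrt{K^2-\delta^2}\cdot n/\sqrt{2\pi} = \alpha n$, so by strong duality $c^\T x^* = \val^\star(u^*) \geq \alpha n$, establishing item 1. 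For item 2, I would combine $\val^\star(u^*)\leq \val^\star(0) = \sum_i c_i^+ = f(e_1) \leq \frac{(1+\epsilon)n}{(1-\epsilon)\sqrt{2\pi}}$ with the weaker lower bound $\val^\star(u^*) \geq \|u^*\|_2 \cdot n(K-\delta)/\sqrt{2\pi}$ (from $\sqrt{1+\|u^*\|_2^2}\geq\|u^*\|_2$) and solve for $\|u^*\|_2$ to obtain the stated bound.

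Item 3 requires one additional Gaussian union bound. If $|N_1|\geq \beta n$, then $T := N_1 \cup \{i\in S:c_i>0\}$ satisfies $|T|\geq \beta n$ and $\sum_{i\in T}c_i \geq \sum_{i\in N_1}c_i + \sum_{i\in S}c_i x_i^* = c^\T x^*$, so together with item 1 we would have $\sum_{i\in T}c_i \geq \alpha n$ for some $T$ with $|T|\geq \beta n$. Combining the Gaussian tail $\Pr[\sum_{i\in T}c_i \geq \alpha n] \leq e^{-\alpha^2 n^2/(2|T|)} \leq e^{-\alpha^2 n/2}$ with the subset count $|\{T : |T|\geq \beta n\}| \leq e^{nH(\beta)} = e^{n\alpha^2/4}$ from~\eqref{eq:bin-sum} (valid since $\beta\geq 1/2$) yields the second failure term $e^{-\alpha^2 n/4}$; on the good event, $|N_1|<\beta n$ and $|S|\leq m$ force $|N_0|\geq (1-\beta)n-m$. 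The principal obstacle is the net-to-sphere transfer: a direct Lipschitz argument would pay the large random factor $\sum_i\|W_i\|_2 = \Theta(n\sqrt{m})$, spoiling the bound, whereas the sublinearity-based self-bounding trick avoids this entirely.
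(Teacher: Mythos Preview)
Your proposal is correct and follows essentially the same route as the paper. The paper packages the net argument as \Cref{lem:gauss-matrix} and the union bound for item~3 as \Cref{lem:ones}, but the content is identical: your sublinearity/self-bounding step $L \leq \max_{v_0\in N_\epsilon} f(v_0) + \epsilon L$ is exactly the paper's computation $\max_v \|(Gv)^+\|_1 \leq (1+\epsilon)\tfrac{n}{\sqrt{2\pi}} + \epsilon \max_v \|(Gv)^+\|_1$, and your lower bound $f(v)\geq f(v_0)-\epsilon L$ matches the paper's use of $\|(G(v-\tilde v))^-\|_1 = \|(G(\tilde v - v))^+\|_1$ followed by the already-established upper bound on $L$; the derivations of items~1--3 from the uniform estimate are likewise the same (the paper's $\bar x$ is the indicator of your set $T$). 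Your support-function phrasing is a clean way to see why the self-bounding trick works, but it is the same argument.
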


To prove~\Cref{lem:primal-dual-props}, we will require two technical
lemmas. The first, \Cref{lem:gauss-matrix}, shows that a random
Gaussian matrix forms a good embedding from $\ell_2$ into a ``truncated''
version of $\ell_1$ (i.e., $\ell_1$ restricted to non-negative coordinates). The
second, \Cref{lem:ones}, upper bounds the value of maximizing a Gaussian
objective over the hypercube restricted to vectors having a large number of
coordinates set to one.

\begin{lemma}
\label{lem:gauss-matrix}
Let $G \in \R^{n \times d}$ be a random matrix with independent $\mathcal{N}(0,1)$
entries. Then, for $\epsilon \in (0,1)$,
\begin{equation}
\Prob \left[\exists~v \in S^{d-1}, \|(Gv)^+\|_1 \notin \left[\frac{1-3\epsilon}{1-\epsilon},\frac{1+\epsilon}{1-\epsilon}\right] \cdot \frac{n}{\sqrt{2\pi}} \right] \leq 2\left(1+\frac{2}{\epsilon}\right)^d e^{-\frac{\epsilon^2 n}{8\pi}}.  \label{eq:l1-bad}
\end{equation}
\end{lemma}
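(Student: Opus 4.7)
The plan is to combine the one-vector concentration bound from Lemma~\ref{lem:gauss-chernoff} with a standard $\epsilon$-net covering argument on $S^{d-1}$, refined by a one-sided Lipschitz inequality for the positive-part functional that preserves the precise ratio stated in~\eqref{eq:l1-bad}.

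First, I would fix $v \in S^{d-1}$ and observe that $Gv \sim \mathcal{N}(0, I_n)$ since the rows of $G$ are i.i.d.\ standard Gaussians and $\|v\|_2 = 1$. Applying Lemma~\ref{lem:gauss-chernoff} with $s = \epsilon\sqrt{n}/(2\sqrt{\pi})$, so that $\sqrt{2n}\,s = \epsilon n/\sqrt{2\pi}$ and $s^2/2 = \epsilon^2 n/(8\pi)$, yields
$$\Pr\!\left[\left|\|(Gv)^+\|_1 - \tfrac{n}{\sqrt{2\pi}}\right| > \epsilon \cdot \tfrac{n}{\sqrt{2\pi}}\right] \leq 2\, e^{-\epsilon^2 n/(8\pi)}.$$
Next I would take an $\epsilon$-net $N_\epsilon \subseteq S^{d-1}$ with $|N_\epsilon| \leq (1+2/\epsilon)^d$ and apply the union bound, producing a ``good event'' of probability at least $1 - 2(1+2/\epsilon)^d e^{-\epsilon^2 n/(8\pi)}$ on which every $w \in N_\epsilon$ satisfies $\|(Gw)^+\|_1 \in [1-\epsilon, 1+\epsilon]\,\frac{n}{\sqrt{2\pi}}$.

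The key step is transferring the net bound to all of $S^{d-1}$ without inflating the constants. For this I use the one-sided inequality $(a)^+ - (b)^+ \leq (a-b)^+$, valid for all $a,b \in \R$ (verified by checking the four sign cases), which gives
$$\|(Gv)^+\|_1 - \|(Gw)^+\|_1 \leq \|(G(v-w))^+\|_1, \qquad \|(Gw)^+\|_1 - \|(Gv)^+\|_1 \leq \|(G(w-v))^+\|_1.$$
Let $S := \sup_{v \in S^{d-1}} \|(Gv)^+\|_1$, attained by compactness. For any $v \in S^{d-1}$ choose $w \in N_\epsilon$ with $\|v-w\|_2 \leq \epsilon$; by positive homogeneity of $\|(G\cdot)^+\|_1$ applied to $(v-w)/\|v-w\|_2 \in S^{d-1}$, we get $\|(G(v-w))^+\|_1 \leq \epsilon S$. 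Thus $\|(Gv)^+\|_1 \leq (1+\epsilon)\frac{n}{\sqrt{2\pi}} + \epsilon S$; taking the supremum gives $S \leq \frac{1+\epsilon}{1-\epsilon}\cdot\frac{n}{\sqrt{2\pi}}$. Substituting this back into the symmetric lower-bound inequality yields
$$\|(Gv)^+\|_1 \geq (1-\epsilon)\tfrac{n}{\sqrt{2\pi}} - \epsilon \cdot \tfrac{1+\epsilon}{1-\epsilon}\tfrac{n}{\sqrt{2\pi}} = \tfrac{1-3\epsilon}{1-\epsilon}\tfrac{n}{\sqrt{2\pi}},$$
which is exactly the lower endpoint in~\eqref{eq:l1-bad}.

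I do not anticipate a serious obstacle: the concentration for a single vector is already established in Lemma~\ref{lem:gauss-chernoff} and the net cardinality bound is standard. The only subtlety is using the one-sided inequality $(a)^+ - (b)^+ \leq (a-b)^+$ rather than the coarser $|(a)^+-(b)^+| \leq |a-b|$, which would bound $\|G(v-w)\|_1 = \|(G(v-w))^+\|_1 + \|(G(w-v))^+\|_1 \leq 2\epsilon S$ and only yield $(1+\epsilon)/(1-2\epsilon)$ in place of the sharper ratio $(1+\epsilon)/(1-\epsilon)$ claimed by the lemma.
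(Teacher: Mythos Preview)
Your proposal is correct and follows essentially the same approach as the paper: concentration for a single direction via Lemma~\ref{lem:gauss-chernoff}, a union bound over an $\epsilon$-net, and then the self-referential bootstrap using $\|(G(v-w))^+\|_1 \leq \epsilon \sup_{u\in S^{d-1}}\|(Gu)^+\|_1$ to pass from the net to the whole sphere. The paper phrases the lower-bound step as $\|(Gv)^+\|_1 \geq \|(G\tilde v)^+\|_1 - \|(G(v-\tilde v))^-\|_1$ together with $\max_u \|(Gu)^-\|_1 = \max_u \|(Gu)^+\|_1$, which is exactly your inequality $(a)^+ - (b)^+ \leq (a-b)^+$ rewritten using $\|x^-\|_1 = \|(-x)^+\|_1$.
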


\begin{proof}
Let $N_{\epsilon}$
denote an $\epsilon$-net of $S^{d-1}$. Let $B$ denote the event in
equation~\eqref{eq:l1-bad}. Let $E$ denote the event that there exists $v' \in
N_{\epsilon}$ such that $\|(Gv')^+\|_1 \not\in [1-\epsilon,1+\epsilon] \frac{n}{\sqrt{2\pi}}$.

We now show that $\Prob[B] \leq \Prob[E]$. For this purpose, it suffices to show that $\neg E \Rightarrow \neg B$. We thus condition $G$ on the
complement of $E$ and show that $B$ does not occur. For every $v \in S^{d-1}$, choose an $\tilde{v} \in
N_{\epsilon}$ satisfying $\|v-\tilde{v}\|_2 \leq \epsilon$. Then, we have
that
\begin{align}
\max_{v \in S^{d-1}} \|(Gv)^+\|_1 &\leq \max_{v \in S^{d-1}} \|(G\tilde{v})^+\|_1 + \|(G(v-\tilde{v}))^+\|_1 \nonumber \\
&\leq (1+\epsilon)\frac{n}{\sqrt{2\pi}} + \epsilon \max_{v \in S^{d-1}} \|(Gv)^+\|_1 \Rightarrow \nonumber \\
\max_{v \in S^{d-1}} \|(Gv)^+\|_1 &\leq \frac{1+\epsilon}{1-\epsilon} \frac{n}{\sqrt{2\pi}}. \label{eq:upper-plus}
\end{align}
To get lower bounds, we use that
\begin{align}
\min_{v \in S^{d-1}} \|(Gv)^+\|_1 &\geq \min_{v \in S^{d-1}} \|(G\tilde{v})^+\|_1 - \|(G(v-\tilde{v}))^-\|_1 \nonumber \\
&\geq (1-\epsilon)\frac{n}{\sqrt{2\pi}} - \epsilon \max_{v \in S^{d-1}} \|(Gv)^-\|_1 \nonumber \\
&= (1-\epsilon)\frac{n}{\sqrt{2\pi}} - \epsilon \max_{v \in S^{d-1}} \|(Gv)^+\|_1.
\intertext{From this inequality, we deduce that}
\min_{v \in S^{d-1}} \|(Gv)^+\|_1 &\geq \left( 1-\epsilon\left( 1 + \frac{1+\epsilon}{1-\epsilon} \right) \right) \frac{n}{\sqrt{2\pi}} = \frac{1-3\epsilon}{1-\epsilon} \frac{n}{\sqrt{2\pi}}. \label{eq:lower-plus}
\end{align}

Thus $\neg E \Rightarrow \neg B$, as needed. Using that $Gv \sim
\mathcal{N}(0,I_n)$ for $v \in S^{d-1}$, we have that
\begin{align*}
\Prob[E] &\leq \sum_{\tilde{v} \in N_{\epsilon}} \Prob \left[ \|(G\tilde{v})^+\|_1 \not\in [1-\epsilon,1+\epsilon] \cdot \frac{n}{\sqrt{2\pi}} \right] \\
         &= |N_{\epsilon}| \Prob_{X \sim \mathcal{N}(0,I_n)} \left[ \|X^+\|_1 \not\in [1-\epsilon,1+\epsilon] \frac{n}{\sqrt{2\pi}} \right] \\
         &\leq 2 \left( 1+\frac{2}{\epsilon} \right)^d e^{-\frac{\epsilon^2 n}{8\pi}} \quad \left(\text{ by~\Cref{lem:gauss-chernoff} }\right).
\end{align*}
The lemma thus follows. \qed
\end{proof}

\begin{lemma}
\label{lem:ones}
Let $c \sim \mathcal{N}(0,I_n)$ and $\alpha \in [0,2\sqrt{\log 2}]$. Choose $\beta
\in [1/2,1]$ satisfying $H(\beta) = \frac{\alpha^2}{4}$. Then
\begin{equation}
\Prob \left[\max_{x \in \{0,1\}^n, \|x\|_1 \geq \beta n} c^\T x \geq \alpha n \right] \leq e^{-\frac{\alpha^2 n}{4}}. \label{eq:ones-event}
\end{equation}
\end{lemma}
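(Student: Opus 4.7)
\medskip

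\noindent\textbf{Proof plan for~\Cref{lem:ones}.} The plan is a simple union bound, organized by the value of $\|x\|_1$, combined with the Gaussian tail bound \eqref{eq:gauss-tail} and the binomial sum bound \eqref{eq:bin-sum}. Let $B$ denote the event in~\eqref{eq:ones-event}. For fixed $x \in \{0,1\}^n$ with $\|x\|_1 = k$, the random variable $c^\T x$ is a sum of $k$ independent $\mathcal{N}(0,1)$'s, hence distributed as $\mathcal{N}(0,k)$ and thus $\sqrt{k}$-sub-Gaussian. Applying~\eqref{eq:gauss-tail} with $s = \alpha n/\sqrt{k}$ gives
\[
\Pr[c^\T x \geq \alpha n] \leq e^{-\alpha^2 n^2/(2k)} \leq e^{-\alpha^2 n/2},
\]
where the last inequality uses $k \leq n$. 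Note that this bound is uniform over all $x$ with $\|x\|_1 \geq \beta n$.

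Next I would count the number of such $x$. Since $\beta \in [1/2,1]$, we have $\lceil \beta n \rceil \geq n/2$, so~\eqref{eq:bin-sum} applies and yields
\[
|\{x \in \{0,1\}^n : \|x\|_1 \geq \beta n\}| = \sum_{k = \lceil \beta n \rceil}^{n} \binom{n}{k} \leq e^{n H(\lceil \beta n \rceil/n)} \leq e^{n H(\beta)},
\]
where in the final step I use that $H$ is decreasing on $[1/2,1]$ together with $\lceil \beta n \rceil/n \geq \beta$. Since $H(\beta) = \alpha^2/4$ by the choice of $\beta$ (which is possible because $\alpha \in [0,2\sqrt{\log 2}]$ implies $\alpha^2/4 \in [0,\log 2] = [0,H(1/2)]$), this count is at most $e^{n\alpha^2/4}$.

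Combining these two estimates via the union bound gives
\[
\Pr[B] \leq e^{n\alpha^2/4} \cdot e^{-\alpha^2 n/2} = e^{-\alpha^2 n/4},
\]
as desired. There is no serious obstacle here: the only mild points are verifying that $\beta \geq 1/2$ validates the use of~\eqref{eq:bin-sum} and that the Gaussian tail is applied with the correct variance $k$ (so that the exponent $\alpha^2 n^2/(2k)$ collapses to the bound $\alpha^2 n/2$ uniformly in $k \leq n$). The key design choice, of course, is that $H(\beta) = \alpha^2/4$ is calibrated exactly so that the entropy term is half of the Gaussian tail exponent, leaving $-\alpha^2 n/4$ in the final bound.
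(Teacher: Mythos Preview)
Your proof is correct and is essentially identical to the paper's own argument: a union bound over all $x$ with $\|x\|_1 \geq \beta n$, using the Gaussian tail bound $\Pr[c^\T x \geq \alpha n] \leq e^{-\alpha^2 n/2}$ (uniform in $k \leq n$) together with the entropy bound~\eqref{eq:bin-sum}, calibrated so that $H(\beta)=\alpha^2/4$ cancels half of the tail exponent. You are slightly more careful than the paper in handling the ceiling $\lceil \beta n\rceil$ via the monotonicity of $H$ on $[1/2,1]$, but the substance is the same.
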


\begin{proof}
For $x \in \{0,1\}^n$, we have that $c^\T x \sim \mathcal{N}(0,\|x\|_1)$.  By the
sub-Gaussian tail bound~\eqref{eq:gauss-tail}, we see that
\[
\Prob[c^\T x \geq \alpha n] = \Prob_{z \in \mathcal{N}(0,1)}[\|x\|_2z \geq \alpha
n] \leq \Prob_{z \in \mathcal{N}(0,1)}[z \geq \alpha
\sqrt{n}] \leq  e^{-\frac{\alpha^2n}{2}}.
\]
By union bound and~\eqref{eq:bin-sum}, we conclude that
\begin{align*}
\Prob \left[\max_{x \in \{0,1\}^n, \|x\|_1 \geq \beta n} c^\T x \geq \alpha n \right]
&\leq |\{x \in \{0,1\}^n: \|x\|_1 \geq \beta n\}|\Prob_{z \in \mathcal{N}(0,1)}[z \geq \alpha \sqrt{n}] \\ &\leq e^{H(\beta)n}e^{-\frac{\alpha^2}{2}n} = e^{-\frac{\alpha^2n}{4}}, \text{ as needed.} \quad \qed
\end{align*}
\end{proof}

We are now ready to prove~\Cref{lem:primal-dual-props}.
\begin{proof}[\Cref{lem:primal-dual-props}]

Let $W \in \R^{(m+1) \times n}$ be as in~\eqref{eq:obj-ext-mat}. Applying~\Cref{lem:gauss-matrix} to
$(W^\T,\epsilon)$ and~\Cref{lem:ones} to $(c,\alpha,\beta)$, the events
($E_1$) $\|(W^\T u)^+\|_1 \in
\left[ \frac{1-3\epsilon}{1-\epsilon},\frac{1+\epsilon}{1-\epsilon} \right] \cdot
\frac{n}{\sqrt{2\pi}}$, $\forall \|u\|_2=1$, and ($E_2)$ $\max_{x \in
\{0,1\}^n, \|x\|_1 \geq \beta n} c^\T x < \alpha n$ hold with probability
at least $1-2\left(1+\frac{2}{\epsilon}\right)^{m+1} e^{-\frac{\epsilon^2
n}{8\pi}}-e^{-\frac{\alpha^2n}{4}}$. It thus suffices to prove that
properties 1-3 hold under $E_1$, $E_2$. We prove each in turn below.

\paragraph{\bf Proof of 1.} To lower bound $c^\T x^*$, i.e., the optimal LP value, it suffices to show that every dual solution has value at least $\alpha n$. Take $u \in \R^m_+$, then we can bound the dual value ${\rm val}^*(u^*)$ using the inequalities
\begin{align}
b^\T u + \|(c-A^\T u)^+\|_1 &\geq (b^{-})^\T u + \|(W^\T(1,-u)^\T)^+\|_1 \nonumber \\
                         &\geq -\|b^-\|_2 \|u\|_2 + \frac{1-3\epsilon}{1-\epsilon} \frac{n}{\sqrt{2\pi}} \sqrt{1+\|u\|_2^2} \quad \left(\text{ by $E_1$ }\right) \nonumber \\
                         &= \frac{n}{\sqrt{2\pi}}\cdot \left(-\delta\|u\|_2 + \frac{1-3\epsilon}{1-\epsilon}\sqrt{1+\|u\|_2^2}\right) \label{eq:dual-lb} \\
                         &\geq \frac{n}{\sqrt{2\pi}}\sqrt{(\frac{1-3\epsilon}{1-\epsilon})^2-\delta^2} = \alpha n \nonumber,
\end{align}
where the last inequality follows by minimizing the expression over
$\|u\|$, which occurs at $\|u\| =
\frac{\delta}{\sqrt{(\frac{1-3\epsilon}{1-\epsilon})^2-\delta^2}}$.

\paragraph{\bf Proof of 2.} To bound $\|u^*\|_2$, we compare bounds for ${\rm val}^*(u^*)$:
\begin{align*}
\frac{1+\epsilon}{1-\epsilon} \frac{n}{\sqrt{2\pi}} &\geq \|(W^\T(1,0_m)^\T)^+\|_1 = \|c^+\|_1 \quad \left(\text{by $E_1$}\right)\\
&=  {\rm val}^*(0_m) \geq {\rm val}^*(u^*) \\
&\geq \frac{n}{\sqrt{2\pi}}\cdot \left(-\delta\|u^*\|_2 + \frac{1-3\epsilon}{1-\epsilon}\sqrt{1+\|u^*\|_2^2}\right) \quad \left(\text{ by~\eqref{eq:dual-lb} }\right) \\
&\geq \frac{n}{\sqrt{2\pi}}\cdot \left(\frac{1-3\epsilon}{1-\epsilon}-\delta\right)\|u^*\|_2 \\
&\Rightarrow \|u^*\|_2 \leq \frac{1+\epsilon}{1-3\epsilon-(1-\epsilon)\delta}.
\end{align*}

\paragraph{\bf Proof of 3.} The optimal feasible solution $x^*$ is unique
almost surely, and as such it is a basic feasible solution.
As such, we know that $x^* \in [0,1]^n$ has at most $m$
fractional components. In particular, $|\{i \in [n]: x^*_i=0\}| \geq n-m-|\{i
\in [n]: x^*_i=1\}|$. Thus, it suffices to show that $|\{i \in [n]:
x^*_i=1\}| \leq \beta n$. Define $\bar{x} \in \{0,1\}^n$ satisfying
\[
\bar{x}_i = \begin{cases}
x^*_i&:~~ x^*_i \in \{0,1\} \\
1&:~~ x^*_i \in (0,1), c_i \geq 0 \\
0&:~~ x^*_i \in (0,1), c_i < 0
\end{cases}, \quad \quad \forall i \in [n].
\]
Clearly, $c^\T \bar{x} \geq c^\T x^* \geq \alpha n$. Thus, by $E_2$ we have that $\beta n > |\{i \in [n]: \bar{x}_i = 1\}| \geq |\{i \in [n]: x_i^*=1\}|$, as needed. \qed
\end{proof}

\section{Properties of the $0$ Columns}
\label{sec:independence_zeros}
For $Y := (c,a_1,\dots,a_m) \sim \mathcal{N}(0,I_{m+1})$ and $u \in \R^m_+$, let $Y^u$
denote the random variable $Y$ conditioned the event $c - \sum_{i=1}^m u_i
a_i \leq 0$. We will crucially use the following lemma directly adapted from Dyer and
Frieze~\cite[Lemma 2.1]{dyer_probabilistic_1989}, which shows that the columns
of $W$ associated with the $0$ coordinates of $x^*$ are independent subject to
having negative reduced cost.

Recall that $N_b = \{i \in [n] : x^*_i = b\}$,
that $S = \{i \in [n] : x^*_i \in (0,1)\}$ and that
$W := \begin{bmatrix}
	c^\T \\
	A
    \end{bmatrix} \in \R^{(m+1) \times n}$
is the objective extended constraint matrix.

\begin{lemma}
\label{lem:independence}
Let $N_0' \subseteq [n]$.
Conditioning on $N_0 = N_0'$, the submatrix $W_{\cdot, [n]\setminus N_0'}$
uniquely determines $x^*$ and $u^*$ almost surely.
If we further condition on the exact value of $W_{\cdot, [n]\setminus N_0'}$,
assuming $x^*$ and $u^*$ are uniquely defined,
then any column $W_{\cdot, i}$ with $i \in N_0'$ is distributed
according to $Y^{u^*}$ and independent of $W_{\cdot,[n]\setminus\{i\}}$.
\end{lemma}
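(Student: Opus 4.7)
The plan is to derive the lemma in three steps: establish almost-sure uniqueness of the optima, express $x^*$ and $u^*$ as deterministic functions of a sub-LP determined by $W_{\cdot,[n]\setminus N_0'}$, and then identify the conditional distribution of the remaining columns.

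I would start by using general position of Gaussian data to conclude that, with probability one, both the primal LP and its dual have unique optima $x^*$ and $u^*$. For a fixed $N_0'\subseteq[n]$, consider the sub-LP
\begin{align*}
\max_{y \in [0,1]^{[n]\setminus N_0'}} c_{\text{sub}}^\T y \quad \text{s.t.}\quad A_{\text{sub}} y \leq b,
\end{align*}
whose data is the submatrix $W_{\cdot,[n]\setminus N_0'}$ together with $b$. Let $\hat y(v)$ and $\hat u(v)$ denote its unique primal and dual optima as deterministic functions of the realization $v=W_{\cdot,[n]\setminus N_0'}$. On the event $\{N_0=N_0'\}$, I would argue by LP duality that $x^*$ equals $\hat y(v)$ extended by zeros on $N_0'$ and that $u^*=\hat u(v)$: indeed, $(x^*_i)_{i\notin N_0'}$ is primal-feasible for the sub-LP, and by KKT applied to the full LP the vector $u^*$ is dual-feasible for the sub-LP, with zero duality gap because the $i\in N_0'$ terms $(c-A^\T u^*)_i^+$ vanish ($x^*_i=0$ forces $(c-A^\T u^*)_i\le 0$). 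Uniqueness of the sub-LP optima then pins down the pair to $(\hat y(v),\hat u(v))$, which is the first claim.

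For the second claim, I would condition on $W_{\cdot,[n]\setminus N_0'}=v$. Since all columns of $W$ are independent, the columns indexed by $N_0'$ remain i.i.d.\ copies of $Y\sim\mathcal{N}(0,I_{m+1})$ and independent of $v$. The key reduction is the set identity, up to null sets,
\begin{align*}
\{N_0=N_0'\}\cap\{W_{\cdot,[n]\setminus N_0'}=v\}=\{W_{\cdot,[n]\setminus N_0'}=v\}\cap\bigcap_{i\in N_0'}\{c_i-A_{\cdot,i}^\T\hat u(v)\le 0\}.
\end{align*}
The forward inclusion is immediate from KKT applied at $u^*=\hat u(v)$. For the reverse, assuming all the reduced-cost inequalities hold, the candidate primal $(\hat y(v),0_{N_0'})$ paired with $\hat u(v)$ is primal-dual optimal for the full LP: the full dual objective at $\hat u(v)$ equals the sub-LP dual objective there (the $i\in N_0'$ contributions vanish), which by LP duality equals $c^\T\hat y(v)$, which equals the full primal objective at the candidate; weak duality then forces optimality, and by uniqueness the candidate equals $x^*$, so $N_0=N_0'$.

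Given this identity, the event $\{N_0=N_0'\}$ conditional on $W_{\cdot,[n]\setminus N_0'}=v$ is a conjunction of events $\{c_i-A_{\cdot,i}^\T\hat u(v)\le 0\}$ each depending on a single column $W_{\cdot,i}$, with $\hat u(v)$ a fixed constant after conditioning. Since these columns were i.i.d.\ $Y$ and independent of $v$ to begin with, the conditional distribution factorizes: each $W_{\cdot,i}$, $i\in N_0'$, is distributed as $Y^{\hat u(v)}=Y^{u^*}$, and the columns in $N_0'$ are mutually independent and independent of $W_{\cdot,[n]\setminus N_0'}$, as claimed. The main obstacle is the reverse inclusion in the set identity, which requires a careful weak-duality argument to convert the pointwise reduced-cost inequalities on $N_0'$ into global optimality of the candidate pair; the uniqueness step has to be handled with some care too, but standard general-position arguments for Gaussian data dispatch it.
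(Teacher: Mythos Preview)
Your proposal is correct and follows essentially the same approach as the paper: both solve the restricted sub-LP on the columns indexed by $[n]\setminus N_0'$ to recover $x^*,u^*$ via complementary slackness, and then identify the residual conditioning on the $N_0'$ columns as the product of the half-space events $c_i-A_{\cdot,i}^\T u^*\le 0$. Your write-up is in fact more explicit than the paper's, particularly in spelling out the duality argument for the reverse inclusion of the set identity; the paper simply asserts that the KKT system for the full LP, once $\bar x,\bar u$ are fixed by the sub-LP, reduces on $N_0'$ to condition~\eqref{eq:lem9:cond1}. One small point worth making precise: your reverse inclusion only yields $N_0\supseteq N_0'$ directly, and equality requires $\hat y_i(v)\neq 0$ for $i\notin N_0'$; this holds for every $v$ in the support of $W_{\cdot,[n]\setminus N_0'}$ conditioned on $\{N_0=N_0'\}$ (by the first claim), so your ``up to null sets'' qualifier is justified provided it is read relative to that conditional law.
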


\begin{proof}
    Knowing $N_0'$, we solve the following program to obtain
    its primal and dual optimal feasible solutions $\bar x$ and $\bar u$.
    \begin{align*}
        \max~&c^\T x \\
        \text{s.t.}~&\sum_{i \in [n]\setminus N_0} x_i a_{ji} \leq b_j && \forall j \in [m]\\
                    & x_i = 0 && \forall i \in N_0' \\
                    & x \in [0,1]^n.
    \end{align*}
    This does not require knowledge of $W_{\cdot,N_0'}$,
    and the optimal feasible primal and dual solutions are unique almost surely.

    If $N_0 = N_0'$, then $\bar x = x^*$ and $\bar u = u^*$.
    Since these solutions satisfy complementary slackness,
    this is equivalent to the following system of equations.
\begin{align}
(1-\bar x_i)(c_i-\sum_{j=1}^m \bar u_ia_{ji})^+ = 0, \label{eq:lem9:cond1}&&\forall i\in [n]. \\
\bar x_i(\sum_{j=1}^m \bar u_ia_{ji}-c_i)^+ = 0, \label{eq:lem9:cond2} &&\forall i\in [n] .\\
\bar u_j (b-A\bar x)_j = 0, &&\forall j \in [m]\label{eq:lem9:cond3}.
\end{align}

Note that for $i \in N_0$, \cref{eq:lem9:cond2,eq:lem9:cond3} are trivially satisfied.
By definition, the distribution
of $W_{\cdot,i} := (c_i,a_{1i},\dots,a_{mi})$ conditioned on
\cref{eq:lem9:cond1} for $\bar x_i=0$ has the same law as $Y^{\bar u}$. Note that each
of these conditions depends on only one $i\in N_0'$, so all columns of
$W_{\cdot, N_0'}$ are independent.

We conditioned only on $N_0 = N_0'$, which has non-zero probability, and we
have shown that for every possible realization of $W_{\cdot,[n]\setminus N_0'}$,
the columns of $W_{\cdot,N_0'}$ are independently distributed as
$Y^{u^*}$, which proves the lemma. \qed
\end{proof}

To make the distribution of the columns $A_{\cdot, i}$ easier to analyze we rotate them.
\begin{lemma}
\label{cor:rotate_coordinates}
 Let $R$ be a rotation that sends the vector $u$ to the vector $\|u\|_2e_m$.
 Suppose $(c, a) \sim Y^{u}$. Define $a':=Ra$. Then $(c,Ra)\sim (c',a')$, where $(c',a')$ is the value of $(\bar{c}',\bar{a}')\sim \mathcal{N}(0,I_{m+1})$ conditioned on $\|u\|_2\bar{a}'_m-\bar{c}'\geq 0$.
\end{lemma}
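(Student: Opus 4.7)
The plan is to obtain the statement as an immediate consequence of the rotational invariance of the standard Gaussian, by tracking how the conditioning event that defines $Y^u$ transforms under the orthogonal change of variables $(c,a) \mapsto (c, Ra)$.

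First, I would observe that the map $\Phi\colon \R^{m+1} \to \R^{m+1}$ defined by $\Phi(c,a) := (c, Ra)$ is an orthogonal transformation, since it acts as the identity on the first coordinate and as the rotation $R$ on the remaining $m$ coordinates. Consequently, if $(\bar c, \bar a) \sim \mathcal{N}(0, I_{m+1})$, then $\Phi(\bar c, \bar a) = (\bar c, R\bar a) \sim \mathcal{N}(0, I_{m+1})$ as well, by rotational invariance of the isotropic Gaussian measure.

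Next, I would rewrite the defining event of $Y^u$ in the rotated coordinates. By definition, $(c, a) \sim Y^u$ has the law of $(\bar c, \bar a) \sim \mathcal{N}(0, I_{m+1})$ conditioned on the event $E := \{u^\T \bar a - \bar c \geq 0\}$. Setting $(c', a') := \Phi(\bar c, \bar a) = (\bar c, R\bar a)$, orthogonality of $R$ gives $\bar a = R^\T a'$, so
\[
u^\T \bar a - \bar c \;=\; u^\T R^\T a' - c' \;=\; (Ru)^\T a' - c' \;=\; \|u\|_2 \, a'_m - c'.
\]
Thus $E = \Phi^{-1}(E')$, where $E' := \{\|u\|_2 \bar a'_m - \bar c' \geq 0\}$ is the event stated in the lemma.

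Finally, I would combine these two observations. Since conditioning commutes with pushforward under a measurable bijection, and since $\Phi$ is a bijection with $\Phi_\ast \mathcal{N}(0,I_{m+1}) = \mathcal{N}(0,I_{m+1})$, we get
\[
(c, Ra) \;=\; \Phi(c,a) \;\sim\; \Phi_\ast\!\bigl(\mathcal{N}(0,I_{m+1}) \mid E\bigr) \;=\; \mathcal{N}(0,I_{m+1}) \mid \Phi(E) \;=\; \mathcal{N}(0,I_{m+1}) \mid E',
\]
which is exactly the distribution of $(c', a')$ described in the statement. There is no real obstacle here beyond carefully bookkeeping the change of variables; the only thing to double-check is that $\Phi$ acts as an isometry both on $\R^{m+1}$ (for invariance of the Gaussian) and on the linear functional defining the conditioning event (so that $E$ pulls back from $E'$ as written).
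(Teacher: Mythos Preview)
Your proof is correct and follows essentially the same approach as the paper: both arguments use that $(c,a)\mapsto(c,Ra)$ is an orthogonal transformation preserving $\mathcal{N}(0,I_{m+1})$, and rewrite the conditioning event $u^\T\bar a-\bar c\geq 0$ as $\|u\|_2\bar a'_m-\bar c'\geq 0$ via $u^\T\bar a=(Ru)^\T(R\bar a)$. Your version is a bit more explicit about the pushforward-commutes-with-conditioning step, but the content is identical.
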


\begin{proof}
Recall that $(c,a)\sim Y^{u'}$ is generated by conditioning $(\bar{c},\bar{a})\sim\mathcal{N}(0,I_{m+1})$ on $\bar{c}-u^\T \bar{a}\leq 0$. The latter is equivalent to $\bar{c}-\|u\|_2(R \bar{a})_m=\bar{c}-u^\T R^{-1}(R \bar{a}) \leq 0$, i.e. to $\|u\|_2(R \bar{a})_m-\bar{c}\geq 0$. Setting $(\bar{c},'\bar{a}')=(\bar{c},R\bar{a})$, we see that $(c,Ra)\sim (c',a')$, where $(c',a')$ is the value of $(\bar{c}',\bar{a}')\sim \mathcal{N}(0,I_{m+1})$ conditioned on $\|u\|_2\bar{a}'_m-\bar{c}'\geq 0$. \qed
\end{proof}

We will slightly change the distribution of the $(c',a'_m)$ above using
rejection sampling, as stated in the next lemma. This will make it easier to
apply the discrepancy result of \Cref{lemma:disc}, which is used to round
$x^*$ to an integer solution of nearby value. In what follows, we denote the
probability density function of a random variable $X$ by $f_X$. In the
following lemma, we use ${\rm unif}(0,\nu)$ to denote the uniform
distribution on the interval $[0,\nu]$, for $\nu \geq 0$.

\begin{lemma}
	\label{lem:rejection_sampling}
        For any $\omega \geq 0$, $\nu > 0$,
        let $X,Y\sim \mathcal{N}(0,1)$ be independent random variables and let $Z=\omega Y-X$. Let $X',Y',Z'$ be these variables conditioned on $Z\geq 0$. We apply rejection sampling on $(X', Y', Z')$ with acceptance probability
\begin{align*}
	\Prob[\text{accept} | Z'=z]=\frac{2\varphi(\nu/\sqrt{1+\omega^2})\mathbf{1}_{z \in [0,\nu]}}{2\varphi(z/\sqrt{1+\omega^2})}.
\end{align*}
Let $X'', Y'', Z''$ be the variables $X',Y',Z'$ conditioned on acceptance. Then:
\begin{enumerate}
	\item $\Prob[\text{accept}]= 2\nu\varphi(\nu/\sqrt{1+\omega^2})/\sqrt{1+\omega^2}$.
	\item $Y''\sim W+V$ where $W\sim \mathcal{N}(0,\frac{1}{1+\omega^2})$, $V\sim \operatorname{unif}(0,\frac{\nu\omega}{1+\omega^2})$ and $W,V$ are independent.
\end{enumerate}
\end{lemma}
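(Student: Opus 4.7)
The plan is to exploit the bivariate Gaussian structure of $(Y,Z)$ and to observe that the acceptance ratio has been designed precisely so that the ratio of Gaussian densities in its denominator cancels the marginal density of $Z'$, flattening the marginal of $Z''$ to a uniform distribution on $[0,\nu]$ while leaving the conditional distribution of $Y$ given $Z$ untouched.

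I would begin by recording the elementary fact that $Z=\omega Y-X\sim\mathcal{N}(0,1+\omega^2)$, so that the density of $Z'$ (i.e.\ $Z$ conditioned on $Z\ge 0$) is
\[
f_{Z'}(z)=\frac{2}{\sqrt{1+\omega^2}}\varphi\!\left(\frac{z}{\sqrt{1+\omega^2}}\right),\quad z\ge 0.
\]
For part (1), I would integrate the acceptance probability against $f_{Z'}$ on $[0,\nu]$. The ratio $\varphi(\nu/\sqrt{1+\omega^2})/\varphi(z/\sqrt{1+\omega^2})\le 1$ for $z\in[0,\nu]$ since $\varphi$ is decreasing on $[0,\infty)$, so the acceptance probability is valid. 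The denominator of the acceptance ratio cancels $\varphi(z/\sqrt{1+\omega^2})$ in $f_{Z'}$, and the resulting integrand is the constant $2\varphi(\nu/\sqrt{1+\omega^2})/\sqrt{1+\omega^2}$, which integrated over $[0,\nu]$ yields exactly $2\nu\varphi(\nu/\sqrt{1+\omega^2})/\sqrt{1+\omega^2}$.

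For part (2), I would use that $(Y,Z)$ is bivariate Gaussian with $\Var(Y)=1$, $\Var(Z)=1+\omega^2$ and $\Cov(Y,Z)=\omega\Var(Y)=\omega$, so the conditional law of $Y$ given $Z=z$ is
\[
Y\mid Z=z\;\sim\;\mathcal{N}\!\left(\tfrac{\omega}{1+\omega^2}\,z,\;\tfrac{1}{1+\omega^2}\right).
\]
Factoring $f_{Y,Z}(y,z)=f_{Y\mid Z}(y\mid z)\,f_Z(z)$ and multiplying by the acceptance weight gives, up to normalization,
\[
f_{Y'',Z''}(y,z)\;\propto\;f_{Y\mid Z}(y\mid z)\,\varphi\!\left(\tfrac{z}{\sqrt{1+\omega^2}}\right)\cdot\frac{\varphi(\nu/\sqrt{1+\omega^2})}{\varphi(z/\sqrt{1+\omega^2})}\,\mathbf{1}_{z\in[0,\nu]}\;\propto\;f_{Y\mid Z}(y\mid z)\,\mathbf{1}_{z\in[0,\nu]},
\]
since conditioning on $Z\ge 0$ only restricts $z$ to $[0,\infty)$ and the indicator $\mathbf{1}_{z\in[0,\nu]}$ subsumes this. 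Consequently $Z''$ is uniform on $[0,\nu]$ and $Y''\mid Z''=z$ is $\mathcal{N}(\omega z/(1+\omega^2),1/(1+\omega^2))$. Writing $V:=\omega Z''/(1+\omega^2)$ and $W:=Y''-V$, the conditional law of $W$ given $Z''$ is $\mathcal{N}(0,1/(1+\omega^2))$, which does not depend on $Z''$, so $W$ is independent of $Z''$ and hence of $V$. Since $Z''$ is uniform on $[0,\nu]$, $V$ is uniform on $[0,\omega\nu/(1+\omega^2)]$, giving the decomposition claimed.

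I do not anticipate a genuine obstacle here: the entire argument is essentially a direct density computation, and the only thing to be careful about is (i) verifying that the acceptance probability is a genuine probability, which follows from the monotonicity of $\varphi$ on $[0,\infty)$, and (ii) correctly identifying the conditional Gaussian law of $Y$ given $Z$ from the covariance of $(Y,Z)$. The conceptual content is the remark that the acceptance ratio is an importance-sampling weight that reweights the marginal of $Z$ from truncated Gaussian to uniform while preserving the conditional law of $Y\mid Z$.
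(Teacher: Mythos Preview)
Your proposal is correct and follows essentially the same approach as the paper: both compute the marginal of $Z'$, observe that the acceptance weight flattens it to uniform on $[0,\nu]$, and then decompose $Y$ as an independent Gaussian $W\sim\mathcal{N}(0,1/(1+\omega^2))$ plus $\frac{\omega}{1+\omega^2}Z$. The only cosmetic difference is that the paper defines $W:=Y-\frac{\omega}{1+\omega^2}Z$ on the unconditioned variables and verifies independence via the joint Gaussian covariance, whereas you phrase the same fact as the conditional law $Y\mid Z=z\sim\mathcal{N}(\omega z/(1+\omega^2),1/(1+\omega^2))$; these are equivalent formulations of the same bivariate Gaussian decomposition.
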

\begin{proof}
Because $Z\sim \mathcal{N}(0,1+\omega^2)$ and  $Z'=Z\mid Z\geq 0$, for the density function $f_{Z'}$ we have $f_{Z'}(z)=2\cdot \mathbf{1}_{z\geq 0}\varphi(z/\sqrt{1+\omega^2})/\sqrt{1+\omega^2}$.
\begin{align*}
\Prob[\text{accept}]= \int_0^\nu\frac{2\varphi(z/\sqrt{1+\omega^2})}{\sqrt{1+\omega^2}}\frac{2\varphi(\nu/\sqrt{1+\omega^2})}{2\varphi(z/\sqrt{1+\omega^2})}dz=\frac{2\nu\varphi(\nu/\sqrt{1+\omega^2})}{\sqrt{1+\omega^2}}.
\end{align*}
Now the probability density function of $Z''$, the variable $Z'$ conditioned on acceptance, is:
\begin{align*}
f_{Z''}(z)=\frac{f_{Z'}(z)\cdot \Prob[\text{accept}|Z'=z]}{\Prob[\text{accept}]}=\mathbf{1}_{z\in [0,\nu]}/\nu.
\end{align*}
So $Z''$ is uniformly distributed in $[0,\nu]$.

Let $W = Y-\frac{\omega}{1+\omega^2} Z$. By a direct calculation, one can check that
$\E[W]=\E[Z]=\E[WZ]=0$, $\E[Z^2] = 1+\omega^2$, $\E[W^2]=\frac{1}{1+\omega^2}$. Since
$W,Z$ are jointly Gaussian, this covariance structure implies that $W \sim
\mathcal{N}(0,\frac{1}{1+\omega^2})$, $Z \sim \mathcal{N}(0,1+\omega^2)$ and that $W,Z$ are independent.
Noting that $Y = W + \frac{\omega}{1+\omega^2}Z$, we
see that the law of $Y''$ is the same as that of
$W+\frac{\omega}{1+\omega^2}Z''$, where $W,Z''$ are independent. Finally observe that $\frac{\omega}{1+\omega^2}Z''\sim \text{unif}(0,\frac{\nu\omega}{1+\omega^2})$. \qed
\end{proof}

\section{Proof of~\Cref{thm:ipgap_main_result}}
\label{sec:proof_main_result}
Recall that $S = \{i \in [n]: x_i^* \in (0,1)\}$ and $N_0 = \{i \in [n]: x^*_i = 0\}$.
To prove \Cref{thm:ipgap_main_result}, we will need to condition on the following event, which we denote by $E$:
\begin{enumerate}
	\item $\|A_{\cdot, i}\|_ 2\leq 4\sqrt{\log (n)}+\sqrt{m}$, $\forall i\in S$.
	\item $\|u^*\|\leq 3$.
	\item $|N_0|\geq n/500$.
\end{enumerate}
Using \Cref{lem:primal-dual-props,lem:basis_bounded} we can show that
$E$ hold with probability $1-n^{-\Omega(1)}$.
Now we take our optimal basic solution $x^*$ and round it to an integral
vector $x'$ using \Cref{lem:round_integral_solution}.  Then we can generate a
new solution $x''$ from $x'$ by flipping the values at indices $T\subseteq N_0$
to one.
In \Cref{lem:high_prob_set_T} we show that with high probability there is such
a set $T$, such that $x''$ is a feasible solution to our primal problem and
that $\val(x^*)-\val(x'')$ is small.

We do this by looking at $t$ disjoint subsets of $N_0$ with small reduced costs. Then we show for each of these sets that with
constant probability it contains a subset $T$ such that for $x''$ obtained from
$T$, $x''$ is feasible and all constraints that are tight for $x^*$ are close
to being tight for $x''$. This argument relies on the~\Cref{lemma:disc} from the introduction, which we prove in \Cref{section:disc}.

If a suitable $T$ exists, then using the gap formula we show that $\val(x^*)-\val(x'')$ is small. Because the $t$ sets
independent the probability of failure decreases exponentially with
$t$. Hence, we can make the probability of failure arbitrarily small by
increasing $t$. We know $\val(x^*)=\val_\mathsf{LP}(A,b,c)$ and because $x''\in
\{0,1\}^n$ we have $\val_\mathsf{IP}(A,b,c) \geq \val(x'')$, so
$\mathsf{IPGAP}(A,b,c) =\val_\mathsf{LP}(A,b,c) -\val_\mathsf{IP}(A,b,c) \leq \val(x^*)-\val(x'')$, which
is small with high probability.
\begin{lemma}
	\label{lem:high_prob_set_T}
	For $n \geq \exp(k_0)$, with $k_0$ as in \Cref{lem:unif-gaus-conv}, we have that 
\begin{equation}
\Prob\left[\mathsf{IPGAP}(A,b,c) > \finalHpUpperBoundConstant  t\cdot \frac{m^{2.5}(\log n + m)^2}{n} \mid E \right] \leq 2\cdot \left(1-\frac{1}{25}\right)^{t} \label{eq:event_prob_bound}
\end{equation}
for $1 \leq t \leq \frac{ n}{20\,000\sqrt{m}k^2}$, where $k := \lceil 165\,000m(\log(n) +m)\rceil$.
\end{lemma}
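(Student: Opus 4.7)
The plan is to construct a binary feasible solution $x''$ with value close to $\val(x^*)$ in two steps: first randomized-rounding the fractional coordinates of $x^*$, and then flipping a carefully chosen subset $T\subseteq N_0$ from $0$ to $1$. Applying \Cref{lem:round_integral_solution} with the column bound from event $E$ yields $x' \in \{0,1\}^n$ agreeing with $x^*$ outside $S$ and with $\|A(x^*-x')\|_2 \leq (4\sqrt{\log n}+\sqrt{m})\sqrt{|S|}/2 = O(\sqrt{m\log n}+m)$. For any $T\subseteq N_0$, set $x'' := x' + e_T$, where $e_T := \sum_{i\in T} e_i$. Using complementary slackness $u^*_j(b-Ax^*)_j = 0$ together with the sign patterns of the reduced costs on $N_0,N_1,S$, the gap formula of \Cref{sec:dual-progr} collapses to
\[
\val(x^*)-\val(x'') \;\leq\; \sqrt{m}\,\|u^*\|_2\,\|A(x''-x^*)\|_\infty \;+\; \sum_{i\in T}\lenfit{c_i-A_{\cdot,i}^\T u^*},
\]
so it suffices to find $T$ such that $x''$ is feasible, $\|A(x''-x^*)\|_\infty$ is tiny, and the total flipped reduced cost is $O(tm^{2.5}(\log n+m)^2/n)$.

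For an absolute constant $C_1>0$, set $\Delta := C_1 t\sqrt{m}k/n$ (the constraint $t \leq n/(20\,000\sqrt m k^2)$ keeps $\Delta = O(1/k)$) and define the filtered set $F := \{i \in N_0 : c_i-A_{\cdot,i}^\T u^* \in [-\Delta,0]\}$. By \Cref{lem:independence}, conditional on $N_0$, $u^*$, and the columns of $W$ outside $N_0$, the columns in $N_0$ are i.i.d.\ as $Y^{u^*}$; a direct Gaussian computation gives $\Pr[i\in F] = \Omega(\Delta/\|(1,u^*)\|_2)$. Since $\|u^*\|_2\leq 3$ and $|N_0|\geq n/500$ under $E$, \eqref{eq:chernoff} yields $|F|\geq t\cdot ak$ with probability $1-e^{-\Omega(tk)}$, for $a := \lceil 2\sqrt{m+1}\rceil$. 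Partition $F$ into $t$ disjoint blocks $F_1,\ldots,F_t$ of size $ak$. Now rotate by $R$ sending $u^*$ to $\|u^*\|_2 e_m$ (\Cref{cor:rotate_coordinates}); the first $m-1$ coordinates of $(RA)_{\cdot,i}$ become independent standard Gaussians, and the pair $(c_i,(RA)_{m,i})$ is $\mathcal{N}(0,I_2)$ conditioned on $\|u^*\|_2 (RA)_{m,i} - c_i \geq 0$. Applying \Cref{lem:rejection_sampling} with $\omega=\|u^*\|_2$, $\nu=\Delta$ makes the $m$th rotated coordinate, on each accepted column, exactly $W+V$ with $W\sim\mathcal{N}(0,1/(1+\omega^2))$ and $V\sim\operatorname{unif}(0,\nu\omega/(1+\omega^2))$ independent; the acceptance probability is near-$1$ when $\Delta$ is small, absorbed into $C_1$.

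Recentering and rescaling the $m$th rotated coordinate to unit variance puts it in the form $\sqrt{\epsilon}U+\sqrt{1-\epsilon}Z$ of \Cref{lem:unif-gaus-conv}, so each column in a block consists of i.i.d.\ unit-variance coordinates that are either standard Gaussian or $(1/10,k_0)$-Gaussian convergent with max density at most $1$. Set the target $D$ to be $RA(x^*-x')$ with the same recentering/rescaling applied in the last coordinate, shifted by $\theta$ on rows $j$ with $u^*_j>0$ to enforce $(Ax'')_j\leq b_j$ on tight rows. Our choice $k=\lceil 165\,000 m(\log n+m)\rceil$ guarantees $\|D\|_2 \leq \sqrt k$ and all other hypotheses of \Cref{lemma:disc}, so each block contains, independently with probability $\geq 1/25$, a set $T_j$ of size $k$ with $\|\sum_{i\in T_j}(RA)_{\cdot,i}-D\|_\infty \leq \theta$; the failure probability across all $t$ blocks is at most $(24/25)^t$. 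On success with $T:=T_j$, rotating back gives $\|A(x''-x^*)\|_\infty \leq \theta$; tight rows are feasible by the safety shift, non-tight rows are feasible since their slack exceeds $\theta$ with overwhelming probability (a separate union bound). Then $\sum_{i\in T}|c_i-A_{\cdot,i}^\T u^*| \leq k\Delta = C_1 tk^2\sqrt{m}/n = O(tm^{2.5}(\log n+m)^2/n)$, while the $\sqrt{m}\|u^*\|_2\theta$ term is negligible since $\theta = \sqrt{\pi k/2}\binom{ak}{k}^{-1/m}$ is super-polynomially small in $n$ by our choice of $k$. Consolidating the $(24/25)^t$ discrepancy failure with the high-probability events for $|F|$, rejection yield, and non-tight slack gives the stated $2(1-1/25)^t$ bound.

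The main obstacle is the technical verification that, after rotation and rejection-sampling, the columns in each block are exactly i.i.d.\ with \emph{independent coordinates} of the precise normalized form required by \Cref{lemma:disc}, and that the $t$ blocks remain independent after these successive conditionings; a secondary difficulty is the clean handling of feasibility at non-tight constraints via a high-probability lower bound on the slack of $x^*$ that survives the shift by $\theta$.
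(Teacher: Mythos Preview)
Your overall architecture matches the paper's proof closely: randomized rounding via \Cref{lem:round_integral_solution}, the gap-formula reduction, rotation by $R$, rejection sampling to decorrelate the last coordinate, and $t$ independent applications of \Cref{lemma:disc}. The sequencing differs slightly (you filter to $F$ first, then partition, then rejection-sample, whereas the paper rejection-samples directly from $N_0$ to define $Z_t'$ and then partitions), but this is cosmetic. One small slip: the discrepancy lemma is applied in $\R^m$, so $a=\lceil 2\sqrt{m}\rceil$, not $\lceil 2\sqrt{m+1}\rceil$.

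There is, however, a genuine gap in your feasibility argument. You propose shifting the target $D$ only on rows with $u^*_j>0$ and handling the remaining rows by a ``separate union bound'' showing their slack exceeds $\theta$. But once you have conditioned on $N_0$ and on $W_{\cdot,[n]\setminus N_0}$ (which you must do to invoke \Cref{lem:independence}), the slacks $(b-Ax^*)_j$ are \emph{deterministic}; there is no randomness left to union-bound over. A row with $u^*_j=0$ can have slack arbitrarily close to zero, and nothing in the event $E$ rules this out. You would need an anti-concentration statement of the form $\Pr[\exists j:\,(b-Ax^*)_j\in(0,\theta')]=o(1)$ established \emph{before} conditioning and folded into $E$, and no such statement is proved (or obviously true).

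The paper avoids this entirely by shifting the target uniformly on \emph{all} rows: it sets $d'=A(x^*-x')-\theta'1_m$ and aims for $\|\sum_{i\in T}A_{\cdot,i}-d'\|_\infty\le\theta'$. Then coordinate-wise $\sum_{i\in T}A_{\cdot,i}\le d'+\theta'1_m=A(x^*-x')$, so $Ax''=Ax'+\sum_{i\in T}A_{\cdot,i}\le Ax^*\le b$ deterministically, with no case split between tight and non-tight rows. The cost of this uniform shift is an extra $\theta'\sqrt{m}$ in $\|D\|_2$, which is negligible. Replacing your row-selective shift with this uniform shift closes the gap and brings your argument in line with the paper's.
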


\begin{proof}
To prove the lemma, we show that the desired probability bound holds when we
condition on the exact values of $N_0 \subseteq [n]$ and
$W_{\cdot,[n]\setminus N_0}$ subject to 1-3 defining $E$. Since $N_0$ and
$W_{\cdot,[n]\setminus N_0}$ determine $E$, this is clearly sufficient. By
\Cref{lem:independence}, we may further assume that this conditioning
uniquely determines $x^*$ and $u^*$. 

Now let $R$ be a rotation that sends the vector $u^*$ to the vector $\|u^*\|_2e_m$.
	Define:
	\begin{align*}
	\Delta&:= 10\,000\sqrt{m}k / n = \Theta(m^{1.5}(m + \log n)/n),  \\
	B_i&:=RA_{\cdot,i}, &\text{for } i\in N_0,\\
	Z_t&:=\{i\in N_0:\|u^*\|_2(B_i)_m-c_i \in [0,t\Delta ]\}, & \text{for } 1 \leq t\leq \frac{1}{2\Delta k}.
	\end{align*}
	 We consider a (possibly infeasible) integral solution $x'$ to the LP, generated by rounding the fractional coordinates from $x^*$. By \Cref{lem:round_integral_solution} we can find such a solution with $\|A(x^*-x')\|_2 \leq (4\sqrt{\log n} + \sqrt{m})\sqrt{|S|}/2\leq (4\sqrt{\log n} + \sqrt{m})\sqrt{m}/2$.
        We will select a subset $T \subseteq Z_t$ of size $|T|=k$ of coordinates
        to flip from $0$ to $1$ to obtain $x'' \in \{0,1\}^n$ from $x'$, so $x'' := x'+\sum_{i \in T} e_i$.
        By complementary slackness,
        we know for $i \in [n]$ that
        $x^*_i(A^\T u^* - c)^+_i = (1 - x^*_i)(c - A^\T u^*)^+ = 0$
        and that
        $x^*_i \notin \{0,1\}$ implies $(c - A^\T u^*)_i = 0$, and
        for $j \in [m]$ that $u^*_j > 0$ implies
        $b_j = (Ax^*)_j$.
        This observation allows us to prove the following key bound
        for the integrality gap of \eqref{primal-lp}
        \begin{align*}
	&\val(x^*)-\val(x'')=\text{val}^\star(u^*) - \text{val}(x'')
	\\&= (b-Ax'')^\T  u^* \\&+ \left(\sum_{i=1}^n x''_i(A^\T u^*-c)_i^+ + (1-x''_i)(c-A^\T u^*)_i^+ \right) & \text{(by~\eqref{eq:gap-formula})}
        \\&= (x^* - x'')^\T A^\T u^* + \sum_{i\in T} (A^\T u^*-c)_i & \hspace{-2cm}\text{(by complementary slackness)}\\
	&\leq \sqrt{m}\|u^*\|_2\left\|A(x''-x^*)\right\|_\infty  + t\Delta k&\text{(since $T\subseteq Z_t$)}.
	\end{align*}
        Condition 2 tells us that $\|u^*\|_2 \leq 3$, and by definition we have
        $$t\Delta k \leq 27226\cdot 10^{10} t \cdot \frac{m^{2.5}(\log(n) + m)^2}{n},$$
        so the rest of
        this proof is dedicated to showing the existence of a set $T \subseteq Z_t$
        such that $\|A(x''-x^*)\|_\infty \leq O(1/n)$ and $Ax'' \leq b$.

        By applying \Cref{lem:independence}, we see that $\{(c_i,
        A_{\cdot,i})\}_{i\in N_0}$ are independent vectors, distributed as
        $\mathcal{N}(0,I_{m+1})$ conditioned on $c_i-A_{\cdot,i}^\T u^*\leq 0$.
        This implies that the vectors $\{(c_i,B_i)\}_{i \in N_0}$, are also
        independent. By \Cref{cor:rotate_coordinates}, it follows that $(c_i,B_i)\sim
        \mathcal{N}(0,I_{m+1}) \mid \|u^*\|_2(B_i)_m -c_i \geq 0$. Note that
        the coordinates of $B_i$ are therefore independent and $(B_i)_j \sim
        \mathcal{N}(0,1)$ for $j\in [m-1]$.

        To simplify the upcoming calculations, we apply rejection sampling
        as specified in \Cref{lem:rejection_sampling} with $\nu=\Delta t$ on
        $(c_i,(B_i)_m)$, for each $i \in N_0$. Let $Z_t' \subseteq N_0$ denote
        the indices which are accepted by the rejection sampling procedure. By
        the guarantees of~\Cref{lem:rejection_sampling}, we have that $Z_t'
        \subseteq Z_t$ and \begin{align*}
\Prob[i\in Z_t'\mid i\in N_0]= \frac{2 \Delta t\varphi(\Delta t/\sqrt{1+\|u^*\|_2^2})}{\sqrt{1+\|u^*\|_2^2}} \geq \frac{2 \Delta t\varphi(1/2)}{\sqrt{10}}\geq  \Delta t /5.
	\end{align*}
Furthermore, for $i \in Z_t'$ we know that $(B_i)_m$ is distributed as a sum of
independent $N(0,\frac{1}{1+\|u^*\|_2^2})$ and ${\rm unif}(0,t\Delta)$
random variables, and thus $(B_i)_m$ has mean and variance
\begin{align*}
        \mu_t&:=\E[(B_i)_m|i\in Z_t'] = \Delta t / 2, \\
        \sigma^2_t&:=\Var[(B_i)_m|i\in Z_t']= \frac{1}{1+\|u^*\|_2^2}+\frac{1}{12}\left(\frac{\|u^*\|_2\Delta t}{1+\|u^*\|_2^2}\right)^2 \in [1/10, 2].
    \end{align*}

		Now define $\Sigma^{(t)}$ to be the diagonal matrix with $\Sigma^{(t)}_{j,j} = 1$, $j \in [m-1]$, and $\Sigma^{(t)}_{m,m} = \sigma_t$.
        Conditional on $i \in Z_t'$, define $B^{(t)}_i$ as the random variable
        \begin{align*}
            B^{(t)}_i &:= (\Sigma^{(t)})^{-1}(B_i-\mu_te_m) \mid i \in Z_t'.
    \end{align*}
    This ensures that all coordinates of $B^{(t)}$ are independent, mean zero and have variance one.

	We have assumed that $|N_0|\geq n/500$ and we know ${\Prob[i\in Z_t'|i\in N_0]\geq \Delta t /5}$.
	Now, using the Chernoff bound
        \eqref{eq:chernoff}
        we find that:
	\begin{align}
	\Prob[|Z_t'|< 2t\sqrt{m}k]&\leq  \Prob\left[|Z_t'|< \frac{1}{5}t\Delta |N_0|/2\right]\nonumber\\
        &\leq \exp\left(-\frac{1}{8} \cdot\frac{1}{5}t\Delta |N_0|\right)\nonumber\\
	&\leq \left(1-\frac{1}{25}\right)^{t}\label{eq:size_zt_bound}.
	\end{align}

	Now we define:
	\begin{align*}
        \theta &:= 	\frac{\sqrt{2\pi k}}{2}\binom{{\lceil 2\sqrt{m}k \rceil}}{k}^{-1/m}, &
	d&:=A(x^* - x').\\
	\theta'&:=2\sqrt{m}\theta.&
	d'&:=d- 1_m \theta'.
	\end{align*}
	Observe that
	\begin{align*}
        \theta = \frac{\sqrt{2\pi k}}{2}\binom{{\lceil2\sqrt{m}k\rceil}}{k}^{-1/m}\leq \frac{\sqrt{2\pi k}}{2} \left(2\sqrt{m}\right)^{-k/m}\leq \frac{1}{32m^2n}.
	\end{align*}
	So $\theta'\leq 1/8$.

If $|Z_t'|\geq \lceil 2\sqrt{m} \rceil kt$, then we can take $t$ disjoint
subsets $Z_t^{(1)},\ldots Z_t^{(k)}$ of $Z_t'$ of size $\lceil 2\sqrt{m}\rceil
k$. Conditioning on this event, we wish to apply \Cref{lemma:disc} to each
$\{B^{(t)}_i\}_{i\in Z_t^{(l)}}$, $l \in [t]$, to help us find a candidate
rounding of $x'$ to a ``good'' integer solution $x''$.

        Now we check that all conditions of \Cref{lemma:disc} are satisfied.
        By definition we have $\left(\frac{2\theta}{\sqrt{2\pi k}}\right)^m\binom{ak}{k} = 1$,
            and we can bound
	\begin{align*}
		\left\|(\Sigma^{(t)})^{-1} (Rd'- e_m k\mu_t )\right\|_2 &\leq \max(1,1/\sigma_t)(\|Rd\|_2 + \theta' +k\mu_t) \\
		&\leq \sqrt{10}(\|RA(x^*-x')\|_2 + \theta' + k\Delta t/2) \\&\leq \sqrt{10}\left(\sqrt{m}(4\sqrt{\log n} + \sqrt{m})/2+\frac{1}{8}+\frac{1}{4}\right)
		\\&\leq 4\sqrt{10m(\log n+m)}\leq  \sqrt{k}.
	\end{align*}

We now show that the
conditions of \Cref{lemma:disc} for $M=1$,$\gamma = 1/10$, and $k_0$ specified
below, are satisfied by $\{B^{(t)}_i\}_{i \in Z_t^{(l)}}$, $\forall l \in [t]$.

First, we observe that the $B^{(t)}_i$ are distributed as $(B^{(t)}_i)_m\sim
\sqrt{\epsilon}V+ \sqrt{1-\epsilon}U$ for $\epsilon =
\frac{1}{(1+\|u^*\|^2_2)\sigma_t^2}$, where $U$ is uniform on
$[-\sqrt{3},\sqrt{3}]$ and $V \sim \mathcal{N}(0,1)$. By
\Cref{lem:unif-gaus-conv}, $(B^{(t)}_i)_m$ is $(1/10, k_0)$-Gaussian convergent
for some $k_0$ and has maximum density at most $1$. Recalling that the
coordinates of $B^{(t)}_i$, $i \in Z_t'$, are independent and $(B^{(t)}_i)_j
\sim \mathcal{N}(0,1)$, $\forall j \in [m-1]$, we see that $B^{(t)}_i$ has
independent $(1/10,k_0)$-Gaussian convergent entries of maximum density at most $1$. Lastly, we note that
\begin{align*}
k &=165\,000m(\log(n) +m) \geq  165\,000(m^2 +k_0m ) \\&\geq \max \{ (4\sqrt{m} + 2)k_0, 144m^{\frac{3}{2}}(\log 1 + 3), 150\,000(\gamma + 1)m^{\frac{7}{4}} \}
\end{align*}
 as needed to apply \Cref{lemma:disc}, using that $n\geq \exp(k_0)$.

Therefore, applying \Cref{lemma:disc}, for each $l \in [t]$, with probability at least $1-1/25$, there exists a set $T_l \subseteq Z_t^{(l)}$ of size $k$ such that:
\begin{equation}
\label{eq:good-set}
	\left\|\sum_{i\in T_l}B_{i}^{(t)}-(\Sigma^{(t)})^{-1} (Rd'- e_m k\mu_t )\right\|_\infty\leq \theta.
\end{equation}

Call the event that \eqref{eq:good-set} is valid for any of the $t$ sets $E_t$.
	Because the success probabilities for each of the $t$ sets are independent, we get:
	\begin{align*}
	\Prob[\neg E_t\mid\ |Z_t'|\geq \lceil 2\sqrt{m} \rceil t k ]\leq \left(1-\frac{1}{25}\right)^{t}.
	\end{align*}
	Combining this with \Cref{eq:size_zt_bound}, we see that $\Prob[\neg E_t]\leq 2\cdot (1-\frac{1}{25})^{t}$. If $E_t$ occurs, we choose $T \subseteq Z_t'$, $|T|=k$, satisfying~\eqref{eq:good-set}. Then,
	\begin{align*}
	\left\|\sum_{i\in T}A_{\cdot,i}-d'\right\|_\infty&\leq
	\left\|\sum_{i\in T}A_{\cdot,i}-d'\right\|_2=\left\|\sum_{i\in T}B_{\cdot,i}-Rd'\right\|_2\\&=\left\|\sum_{i\in T}(\Sigma^{(t)}) B^{(t)}_{\cdot,i}+k\mu_te_m-Rd'\right\|_2\\
	&\leq \max(1,\sigma_t)\sqrt{m}\left\|\sum_{i\in T} B^{(t)}_{\cdot,i}-(\Sigma^{(t)})^{-1} (Rd'- e_m k\mu_t)\right\|_\infty\\
	&\leq 2 \sqrt{m}\theta  =\theta'.
	\end{align*}
	Now we will show that when $E_t$ occurs, $x''$ is feasible and $\|A(x''-x^*)\|_ \infty=O(1/n)$. First we check feasibility:
		\begin{align*}
	\sum_{i=1}^m x''_ia_{ji}&= (Ax' )_j + \sum_{i\in T}a_{ji} \leq (Ax' )_j+d'_j +   \theta'\\&=(Ax')_j+(A(x^*-x'))_j=(Ax^*)_j\leq b_j.
	\end{align*}
	Hence the solution is feasible for our LP.
	We also have \begin{align*}
\|A(x''-x^*)\|_\infty&=\|Ax''-Ax'-d\|_\infty\\&=\|\sum_{i\in T}A_{\cdot,i}-d'\|_\infty\leq \|\sum_{i\in T}A_{\cdot,i}-d\|_\infty +\theta'\leq 2\theta'.
	\end{align*}
        Now we can finalize our initial computation:
        \begin{align*}
	\val(x^*)-\val(x'')
	&\leq \sqrt{m}\|u^*\|_2\left\|A(x''-x^*)\right\|_\infty  + t\Delta k&\\
	&\leq 6\sqrt{m}\theta' +10\,000\cdot  \frac{\sqrt{m} \cdot  t\cdot k^2}{n}\\
	&\leq \frac{12}{32mn} + 27226\cdot 10^{10}  t\cdot \frac{m^{2.5}(\log n + m)^2}{n}\\
	&\leq    \finalHpUpperBoundConstant  t\cdot \frac{m^{2.5}(\log n + m)^2}{n}. \quad \qed
	\end{align*}
\end{proof}
Now we have all ingredients to prove \Cref{thm:ipgap_main_result}.
\begin{proof}[\Cref{thm:ipgap_main_result}]
	Substituting $\epsilon=1/9$ in \Cref{lem:primal-dual-props} gives
	\begin{align*}
\delta &=\sqrt{2\pi}\|b^-\|_2/n\leq \sqrt{2\pi}/10\leq 1/3\\
\alpha&=\frac{1}{\sqrt{2\pi}}\sqrt{\frac{1-3\epsilon}{1-\epsilon}-\delta^2}\geq \frac{1}{\sqrt{2\pi}}
	\sqrt{\left(\frac{3}{4}\right)^2-\delta^2}\\
	\|u^*\|_2&\leq \frac{\epsilon + 1}{1-3\epsilon-(1-\epsilon)\delta}\leq 3
	\end{align*}
	Now note that $H(499/500)< \frac{\alpha^2}{4} $, so $\beta < 499/500$. If we choose $n$, in such a way that $(499/500-\beta)n\geq m$, then $(1-\beta)n-m\geq n/500$. So \Cref{lem:primal-dual-props}	 yields that the probability that conditions 2 and 3 do not hold is at most $(20)^{m+1}\exp(-2^{-11}n)+\exp(-n/16\pi)$. We can still choose $C$ such that $n\geq Cm$ and  $t \leq \frac{n}{Cm^{2.5}(m+\log n)^2}$, so by taking $C\geq 2^{12}$, this probability is smaller than $2(1-1/25)^t$. By \Cref{lem:basis_bounded} condition 1 holds with probability at least $1-n^{-7}$.

	If 1, 2 and 3 hold  \Cref{lem:high_prob_set_T} implies that $\mathsf{IPGAP}(A,b,c) \geq \finalHpUpperBound$ holds with probability at most $2(1-1/25)^t$, as long as we set $C\geq \max(\exp(k_0), 10^{15})$ for $k_0$ as defined in \Cref{lem:unif-gaus-conv}. So:

	\begin{align*}
	&\Prob\left[\mathsf{IPGAP}(A,b,c) > \finalHpUpperBound\right]\\&\leq 2\cdot \left(1-\frac{1}{25}\right)^{t}
	 +n^{-7}+2\left(1-\frac{1}{25}\right)^{t}\\
	 &\leq 4(1-1/25)^t+n^{-7}. \quad \qed
	\end{align*}
\end{proof}

\section{Proof of the Improved Discrepancy Lemma}
\label{section:disc}

In this section, we prove \Cref{lemma:disc}, which is an improved variant of a discrepancy lemma of Dyer
and Frieze~\cite[Lemma 3.4]{dyer_probabilistic_1989}. This lemma is the main
tool used to restore feasibility and near-optimality of the rounding $x'$ of
$x^*$ in the preceding section. Compared to Dyer and Frieze's original lemma,
the main difference is that we achieve a constant probability of success
instead of $(2/\sqrt{3})^{-m}$, which is crucial for reducing the exponential
in $m$ dependence of the integrality gap down to polynomial in $m$. For this
purpose, we choose our subsets of size $k$ from a slightly larger universe,
i.e., $2 \sqrt{m}k$ instead of $2k$, to reduce correlations. We also provide a
tighter analysis of the lemma allowing us to show that the conclusion holds
as long as $k = \Omega(m^{7/4})$, where the $\Omega(\cdot)$ hides a dependence
on the parameters of the underlying coordinate distributions. We restate the lemma below.

\disclemma*

Our proof of \Cref{lemma:disc} follows the same proof strategy as~\cite[Lemma 3.4]{dyer_probabilistic_1989}. \\
Namely, we use the second moment method to lower bound the success
probability by $\E[Z]^2/\E[Z^2]$, where $Z$ counts the number of the
$k$-subsets of $[ak]$ which satisfy~\eqref{eq:disc-prob}. The value of
$\theta$ is calibrated to ensure that
\[
\E[Z] = \binom{ak}{k} \Prob\left[\length{(\sum_{j=1}^k Y_j)-D}_\infty \leq \theta \right] \approx 1.
\]
This relies upon the fact that $\sum_{j=1}^k Y_j$ is very close in
distribution to $\mathcal{N}(0,kI_m)$ and that the target $D$ is close to the
origin. To lower bound the ratio $\E[Z]^2/\E[Z^2]$, the key challenge from here is to
show that for two \emph{typical} $k$-subsets $K_1,K_2 \subset [ak]$, the
joint success probability is close to that of the \emph{independent} case,
i.e., where $K_1 \cap K_2 = \emptyset$. Namely, we wish to show that
\begin{equation}
\label{eq:cor-bnd}
\Prob \left[\length{(\sum_{j \in K_i}
Y_j)-D}_\infty \leq \theta, i \in \{1,2\}\right] \leq C
\Prob \left[\length{(\sum_{j=1}^k Y_j)-D}_\infty \leq \theta\right]^2
\end{equation}
for some not too large $C \geq 1$. This turns out to yield a lower bound
$\E[Z]^2/\E[Z]^2 = \Omega(1/C)$. To upper bound $C$, we rely upon the
following key technical lemma, which upper bounds the probability that two
correlated \emph{nearly-Gaussian} random sums land in the same interval.

\begin{lemma}
\label{lem:technical}
Let $r, k \in \mathbb{N}$ such that $\alpha := k/(k-r) \geq 4/3$. Let $Y_1, Y_2, \ldots Y_{k+r}$ be i.i.d. copies of a $(\gamma, k_0)$-Gaussian convergent random variable $Y$. Let $U =
\sum\limits_{j=1}^r Y_j, V = \sum\limits_{j=r+1}^k Y_j, W = \sum\limits_{j=k+1}^{r+k} Y_j$. Assume that $k \geq 280(\gamma+1) \alpha^{\frac{7}{2}}$, $\min \{r,k-r\} \geq k_0$ and $\theta \in \left[ 0,\frac{1}{\sqrt{k}} \right]$. For any $D \in \R$, we have that
$$\Prob[U+V \in [D -\theta, D +\theta], W+V \in [D -\theta, D +\theta]]
\leq  \frac{4\theta^2}{2\pi k}\left(1+\frac{16}{9 \alpha^2} \right).$$
\end{lemma}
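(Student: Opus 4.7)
The plan is to bound the left-hand side by $4\theta^2$ times the supremum of the joint density of $(S_1, S_2) := (U+V, W+V)$, and to control this supremum via a Gaussian approximation argument enabled by the local limit hypothesis on $Y$. Using the mutual independence of $U, V, W$ (and the fact that $U, W$ are i.i.d.), one writes the joint density as
\[
p(x_1, x_2) = \int_{\R} f_U(x_1 - v)\, f_U(x_2 - v)\, f_V(v)\, dv,
\]
so that $\Prob[U+V \in I,\, W+V \in I] = \int_I \int_I p(x_1,x_2)\, dx_1\, dx_2 \leq 4\theta^2 \sup p$. By the $(\gamma, k_0)$-Gaussian convergence of $Y$ together with $\min\{r, k-r\} \geq k_0$, I would decompose $f_U = \tilde{f}_U + \epsilon_U$ with $\|\epsilon_U\|_\infty \leq \gamma/(r\sqrt{r})$ and analogously $f_V = \tilde{f}_V + \epsilon_V$ with $\|\epsilon_V\|_\infty \leq \gamma/((k-r)\sqrt{k-r})$, where $\tilde{f}_U, \tilde{f}_V$ are the densities of $\mathcal{N}(0, r)$ and $\mathcal{N}(0, k-r)$ respectively. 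Substituting into $p$ and expanding yields a ``clean'' bivariate Gaussian joint density $\tilde p$ plus seven error terms, each of which contains at least one $\epsilon$-factor.

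The main term is maximized at $(0,0)$ with value $\frac{1}{2\pi\sqrt{\det \Sigma}} = \frac{1}{2\pi k\sqrt{1 - 1/\alpha^2}}$, where $\Sigma = \bigl(\begin{smallmatrix} k & k-r \\ k-r & k \end{smallmatrix}\bigr)$ is the covariance of $(S_1, S_2)$ in the Gaussian world (using $r(2k-r) = k^2(1 - 1/\alpha^2)$). Each error term is controlled by pulling out an $L^\infty$ bound on the $\epsilon$-factors and using that the remaining Gaussian factors either integrate to one or are themselves densities of sums of independent Gaussians. The key observation is that a ``single $\epsilon_U$'' term such as $\int \epsilon_U(x_1 - v)\, \tilde f_U(x_2 - v)\, \tilde f_V(v)\, dv$ is bounded by $\|\epsilon_U\|_\infty \cdot \frac{1}{\sqrt{2\pi k}}$ (the supremum density of a sum of $k$ independent Gaussians) rather than the weaker $\|\epsilon_U\|_\infty \cdot \frac{1}{\sqrt{2\pi r}}$. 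Summing the seven cross-term contributions, the total error multiplied by $2\pi k$ should be of order $\gamma\,\alpha^{3/2}/[k(\alpha-1)^{3/2}]$.

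Finally, I would verify the purely algebraic inequality $\frac{1}{\sqrt{1-1/\alpha^2}} + (\text{accumulated relative error}) \leq 1 + \frac{16}{9\alpha^2}$ for $\alpha \geq 4/3$ and $k \geq 280(\gamma+1)\alpha^{7/2}$. A direct series expansion shows the slack $\frac{16}{9\alpha^2} - \bigl(\frac{1}{\sqrt{1-1/\alpha^2}} - 1\bigr)$ is at least $\Theta(1/\alpha^2)$ throughout the relevant range (roughly $23/(18\alpha^2)$ for large $\alpha$, and about $0.49$ at $\alpha = 4/3$), and the hypothesis on $k$ is calibrated precisely so that the accumulated error fits into this slack. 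The main obstacle will be the bookkeeping for the seven cross terms: the slack shrinks like $1/\alpha^2$ while the dominant error scales like $\gamma\,\alpha^{3/2}/[k(\alpha-1)^{3/2}]$, so the constants multiplying $\gamma/k$ must be tracked tightly rather than crudely bounded. In particular, leveraging the sum-of-$k$ (rather than sum-of-$r$) Gaussian normalization for the single-$\epsilon$ terms is what appears to enable the hypothesis on $k$ to depend on $\alpha^{7/2}$ instead of a larger power, and I expect the $\theta \leq 1/\sqrt{k}$ constraint to enter only mildly, via refining some of the cross-term estimates near the boundary of $I\times I$.
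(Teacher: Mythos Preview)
Your approach is correct and differs from the paper's in organization rather than substance. The paper conditions on the shared block $V$, writing $P = \int \Pr[\bar U \in I - y\sqrt{(k-r)/r}]^2\,\bar h(y)\,dy$, applies the Gaussian-convergence bound to $\bar g$ inside the square, uses the $1$-Lipschitz property of $\varphi$ to replace the inner integral by $\frac{2\theta}{\sqrt r}\varphi(\text{center})$ plus a small remainder, and only then substitutes $\bar h \to \varphi$ and evaluates the resulting Gaussian integral to obtain the main term $\frac{1}{2\pi}\sqrt{\frac{\alpha-1}{\alpha+1}}$. Your route---bounding the probability by $4\theta^2\sup p$ and reading off $\sup\tilde p$ from the bivariate Gaussian covariance determinant---reaches the identical main term (indeed $\frac{k}{r}\sqrt{\frac{\alpha-1}{\alpha+1}} = \frac{1}{\sqrt{1-1/\alpha^2}}$), and handles the error through a product expansion rather than the paper's sequential substitutions. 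Two small remarks: first, with your $\sup p$ bound the hypothesis $\theta \le 1/\sqrt k$ is in fact not needed at all, since $\sup p$ does not depend on $\theta$; in the paper it is used only to absorb the Lipschitz remainder $\theta/\sqrt r$. Second, for large $\alpha$ the dominant error among your seven cross terms is actually the single-$\epsilon_V$ term $\int \tilde f_U \tilde f_U\,\epsilon_V$, which after multiplying by $2\pi k$ scales like $\sqrt\pi\,\gamma\alpha^{3/2}/k$ and is what forces $k \gtrsim \gamma\alpha^{7/2}$; the single-$\epsilon_U$ terms you highlight are the bottleneck only near $\alpha = 4/3$.
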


The correlation is quantified by $\alpha > 0$, which controls the number of
shared terms $k/\alpha$ in both sums. As $\alpha$ increases, the probability
of the joint event approaches the worst-case bound for the independent
case. More precisely, if $X_1$ and $X_2$ are independent $\mathcal{N}(0,k)$
(i.e., sums of disjoint sets of $k$ independent standard Gaussians), $D
= 0$ and $\theta$ is very small, then
\begin{align*}
\Prob[X_1 \in [-\theta, \theta], X_2 \in [-\theta, \theta]]
&= \Prob_{X \sim \mathcal{N}(0,1)} \left[ X \in \left[-\frac{\theta}{\sqrt{k}},\frac{\theta}{\sqrt{k}}\right] \right]^2 \\ &\approx \left( \frac{2\theta}{\sqrt{k}} \varphi(0) \right)^2 = \frac{4\theta^2}{2\pi k}.
\end{align*}

In their proof, Dyer and Frieze proved a special case of the above for
$\alpha \approx 2$. In their setting, they pick $K_1,K_2 \subseteq [2k]$ of
size $k$, where the \emph{typical} intersection size is $|K_1 \cap K_2| \approx
k/2$. To upper bound $C$ in~\eqref{eq:cor-bnd}, they
apply~\Cref{lem:technical} to each of the coordinates $\sum_{j \in K_1}
Y_{j,i}, \sum_{j \in K_2} Y_{j,i}$, for $i \in [m]$, with $\alpha \approx 2$,
to deduce a $2^{O(m)}$ bound on $C$. By increasing the universe size from
$[2k]$ to $[ak]$, where $a = \lceil{2\sqrt{m}\rceil}$, we reduce the typical
intersection size to $k/a$. This allows us to set $\alpha \approx 1/a$, which reduces $C$ to roughly $(1+\frac{1}{a^2})^m = O(1)$.

\paragraph{\bf Remark on Gaussian convergence.} The attentive reader may have
noticed that our definition of $(\gamma, k_0)-$Gaussian convergence requires
the density of the normalized sum of $k$ i.i.d.~copies of a random variable $X
\in \R$ to be within $\frac{\gamma}{k}$ of the density of the standard
Gaussian, for $k \geq k_0$, which is a stronger requirement than the more
conventional $\frac{\gamma}{\sqrt{k}}$-type convergence. Indeed, this faster
rate of convergence can only be expected when the first three moments of $X$
match those of the standard Gaussian (as is the case in our application),
whereas the slower rate requires only that $X$ have mean $0$ and variance $1$
and that the density of $X$ be sufficiently ``nice''. One can show however
that \Cref{lemma:disc} still holds only assuming
$\frac{\gamma}{\sqrt{k}}$-convergence, provided that the requirement $k \geq
150000(\gamma+1)m^{7/4}$ is replaced by $k \geq 
150000^2 (\gamma+1)^2 m^{7/2}$. It is in fact easy to verify that
every inequality in the proof of \Cref{lemma:disc} utilizing Gaussian
convergence is preserved when replacing $k$ by $\sqrt{k}$, and hence the
required lower bound on $k$ is replaced by the same lower bound for
$\sqrt{k}$. The larger $m^{7/2}$ dependence on $m$ would however increase the
dependence on $m$ in~\Cref{thm:ipgap_main_result}, which is why we chose to
state~\Cref{lemma:disc} with the stronger Gaussian convergence requirement. 
\vspace{1em}

We now prove \Cref{lemma:disc} using \Cref{lem:technical}, deferring the
proof of the latter to the end of the section.

\begin{proof} [\Cref{lemma:disc}]
For every $K \subset [ak]$ let $E_K$ denote the event that $\length{\sum\limits_{j \in K} Y_j - D}_\infty \leq \theta$. Let $Z = \sum_{K \subset [ak], |K|=k} \mathbf{1}_{E_K}$.
$$\Prob \left[ \exists K \subset [ak] : |K| = k, \length{\sum\limits_{j\in K} Y_{j}-D}_\infty \leq \theta \right] = \Prob[Z>0].$$

By an application of the second moment method, if $Z$ is a positive-integer valued random variable of finite mean and variance then

$$\Prob[Z > 0] \geq \frac{\E[Z]^2}{\E[Z^2]}.$$

Define $K_i = \set{i+1, i+2, \dots, i+k}, 0 \leq i \leq k.$
\begin{align*}
\E[Z] &= \sum_{K \subset [ak], |K| = k} \Prob[E_K] = {ak \choose k} \Prob[E_{K_0}], \\
\E[Z^2] &= \sum_{K, K' \subset [ak], |K|=|K'|=k} \Prob[E_{K} \cap E_{K'}],
\end{align*}
where the first equality follows since $Y_1,\dots,Y_{ak}$ are independent and identically distributed. By the same reasoning, $\Prob[E_{K} \cap E_{K'}]$ depends only on $|K \cap K'|$. Noting that $\lenfit{K_0 \cap K_r} = k-r$ for $0 \leq r \leq k$, we may rewrite $\E[Z^2]$ as
$$\E[Z^2] = {ak \choose k}^2 \sum\limits_{r=0}^k \Prob[\lenfit{K \cap K'} = k-r] \Prob[E_{K_0} \cap E_{K_r}],$$
where $K,K'$ are independent uniformly random subsets of $[ak]$ of size $k$.
Since we have $\Prob[\lenfit{K \cap K'} = k] = {ak \choose k}^{-1},$
$$\E[Z^2] = \E[Z] + {ak \choose k}^2 \sum\limits_{r=1}^k \Prob[\lenfit{K \cap K'} = k-r] \Prob[E_{K_0} \cap E_{K_r}].$$

In the rest of the proof, we show $\E[Z^2]/\E[Z]^2 \leq 25$, which implies the desired lower bound $\E[Z]^2/\E[Z^2] \geq 1/25$. This will be established for sufficiently large $k$ and the required constraints for $k$ will be collected at the end. Let
$$J := [k] \cap \left( k - \frac{(1+\varepsilon)k}{a}, k - \frac{(1-\varepsilon)k}{a} \right) \text{ for } \varepsilon = \frac{1}{2}.$$
We decompose
\begin{align}\E[Z^2]/\E[Z]^2 = \frac{1}{\E[Z]} &+ \sum_{r \in J} \Prob[|K \cap K'|=k-r]\frac{\Prob[E_{K_0} \cap E_{K_r}]}{\Prob[E_{K_0}]^2} \nonumber \\ &+ \sum_{r \in [k] \backslash J} \Prob[|K \cap K'|=k-r]\frac{\Prob[E_{K_0} \cap E_{K_r}]}{\Prob[E_{K_0}]^2}. \label{eq:ratio-decomp}
\end{align}

We upper bound $\E[Z^2]/\E[Z]^2$ in three steps. First, a lower bound on $\E[Z]$. Then, a worst-case upper bound on $\frac{\Prob[E_{K_0} \cap E_{K_r}]}{\Prob[E_{K_0}]^2}$ for every $1 \leq r \leq k$, and finally a superior average-case upper bound on $\frac{\Prob[E_{K_0} \cap E_{K_r}]}{\Prob[E_{K_0}]^2}$ for $r \in J$.

Let us prove a trivial upper bound for $\theta$. Since ${ak \choose k} \geq a^k$, we have
$$\left(\frac{2\theta}{\sqrt{2\pi k}} \right)^m {ak \choose k} = 1 \Rightarrow \theta \leq a^{-k/m} \sqrt{\frac{\pi k}{2}}.$$
If $k \geq 2m \log m $, then $k / m \geq \log k$, so that
$$\theta \leq \left( 2 \sqrt{m} \right)^{-\frac{k}{m}} \sqrt{\frac{\pi k}{2}} \leq 2^{-\frac{k}{m}} \sqrt{ \frac{\pi k} {2}} \leq \frac{1}{\sqrt{k}}.$$
Henceforth, we will assume $\theta \leq 1/\sqrt{k}$ and require $k \geq 2m \log m$.

\paragraph{\bf Lower Bound for the Expectation.} The goal of this subsection is to show that

$$\E[Z] = {ak \choose k} \Prob[E_{K_0}] \geq 1/e.$$

Since the individual coordinates of each vector $Y_j$, $j \in [k]$, are
independent:

$$\Prob[E_{K_0}] = \prod\limits_{i=1}^m \Prob\left[ \sum\limits_{j=1}^k Y_{j, i} \in [D_i - \theta, D_i + \theta]\right].$$

Each term in this product is at least
$$
\Prob\left[ \sum\limits_{j=1}^k Y_{j, i} \in [D_i - \theta,
D_i + \theta]\right] \geq \frac{2\theta}{\sqrt{2\pi k}} \exp
\left(-\frac{D_i^2}{2k} - \frac{2e\sqrt{2\pi}(\gamma+1)}{k}\right) , \forall
1 \leq i \leq m.$$
To prove this estimate, let $\bar{g}_i^{k}$ denote the density function of
$\sum\limits_{j=1}^k Y_{j, i} /\sqrt{k}$. Then,

\[
\Prob\left[ \sum_{j=1}^k Y_{j, i} \in [D_i - \theta, D_i + \theta]\right] \hspace{-.2em} = \Prob\left[ \sum_{j=1}^k \frac{ Y_{j, i}}{\sqrt{k}} \in \left[ \frac{D_i - \theta}{\sqrt{k}}, \frac{D_i + \theta}{\sqrt{k}} \right] \right]
\hspace{-.2em} = \int_{\frac{D_i - \theta}{\sqrt{k}}} ^{\frac{D_i + \theta}{\sqrt{k}}} \bar{g}_i^{k}(x) \mathrm{d} x.
\]

Recall that for $i \in [m]$, $Y_{1,i},\dots,Y_{k,i}$ are i.i.d. $(\gamma, k_0)$-Gaussian convergent random variables. If $k \geq k_0$, we have that
$$\bar{g}_i^{k}\left(x \right) \geq \varphi \left( x \right) - \frac{\gamma}{k}, \forall x \in \R,$$
where $\varphi(x) := \frac{\exp(-x^2/2)}{\sqrt{2\pi}}$. Assuming that $k \geq 4\sqrt{2\pi} e (\gamma +1)m$, the desired estimate is derived as follows:
\begin{align*}
\int_{\frac{D_i - \theta}{\sqrt{k}}}^{\frac{D_i + \theta}{\sqrt{k}}} \bar{g}_i^{k}(x) \mathrm{d} x
 &\geq \int_{\frac{D_i - \theta}{\sqrt{k}}}^{\frac{D_i + \theta}{\sqrt{k}}} \varphi \left( x \right) - \frac{\gamma}{k} \mathrm{d} x \\
&\geq \int_{\frac{D_i - \theta}{\sqrt{k}}}^{\frac{D_i + \theta}{\sqrt{k}}} \varphi \left( \frac{D_i}{\sqrt{k}} \right) - \frac{\theta}{\sqrt{k}} - \frac{\gamma}{k} \mathrm{d} x \quad \left(~ \varphi \text{ is 1-Lipschitz } \right)\\
&\geq \frac{2\theta}{\sqrt{2\pi k}} \left( \exp \left(-\frac{D_i^2}{2k} \right) - \frac{\sqrt{2\pi} \theta}{\sqrt{k}} - \frac{\sqrt{2\pi}\gamma}{k}\right) \\
&\geq \frac{2\theta}{\sqrt{2\pi k}} \left( \exp \left(-\frac{D_i^2}{2k} \right) - \frac{\sqrt{2\pi}(\gamma+1)}{k}\right) \quad \left(\text{ since } \theta \leq \frac{1}{\sqrt{k}} \right) \\
&\geq \frac{2\theta}{\sqrt{2\pi k}} \exp \left(-\frac{D_i^2}{2k} \right)\left( 1 - \frac{2\sqrt{2\pi}(\gamma+1)}{k} \right)\\& \hspace{4cm} \left(~\length{D}_2 \leq \sqrt{k} \Rightarrow \exp \left(-\frac{D_i^2}{2k}\right) \geq 1/2 \right) \\
&\geq \frac{2\theta}{\sqrt{2\pi k}} \exp \left( -\frac{D_i^2}{2k} - \frac{2e\sqrt{2\pi}(\gamma+1)}{k} \right) \\&\hspace{4cm} \left( \text{ since } 1-x/e \geq e^{-x}, 0 \leq x \leq 1~ \right).
\end{align*}

Building on this estimate, we now consider all coordinates and get
\begin{align*}
\Prob[E_{K_0}] &\geq \left( \frac{2\theta}{\sqrt{2\pi k}} \right)^m \prod\limits_{i=1}^m \exp \left( -\frac{D_i^2}{2k} - \frac{2e\sqrt{2\pi}(\gamma+1)}{k} \right)  \\
&\geq \left( \frac{2\theta}{\sqrt{2\pi k} }\right)^m \exp \left( -\sum\limits_{i=1}^m \frac{D_i^2}{2k} - \frac{1}{2} \right) \quad \left(\text{ since } k \geq 4\sqrt{2\pi} e (\gamma +1)m~\right)\\
&\geq \left( \frac{2\theta}{\sqrt{2\pi k}} \right)^m \exp \left( -1\right) \quad \left( \text{ since } \length{D_i}_2 \leq \sqrt{k} ~ \right).
\end{align*}

Since $\left(\frac{2\theta}{\sqrt{2\pi k}}\right)^m {ak \choose k} = 1,$ we conclude that $\E[Z] \geq 1/e$, as needed.

\paragraph{\bf Upper bound on $\frac{\Prob[E_{K_0} \cap E_{K_r}]}{\Prob[E_{K_0}]^2}$.}

To analyze $\Prob[E_{K_0} \cap E_{K_r}]$, for $r \in [k]$, we define
$$U_i := \sum\limits_{j=1}^r Y_{j,i}, V_i := \sum\limits_{j=r+1}^k Y_{j,i}, W_i := \sum\limits_{j=k+1}^{r+k} Y_{j,i}, \text{ for } i \in [m].$$
The probability of $E_{K_0} \cap E_{K_r}$ can now be expressed as follows:

\begin{equation}
\label{eq:cor-prob}
\Prob[E_{K_0} \cap E_{K_r}] = \prod\limits_{i=1}^m \Prob[U_i + V_i \in [D_i -\theta, D_i+\theta], W_i+V_i \in [D_i -\theta, D_i+\theta]].
\end{equation}

For $i \in [m]$, let $g_i: \R \rightarrow \R_+$ denote the probability density of $Y_{1,i}$, which satisfies $\sup_{x \in \R} g_i(x) \leq M$ by assumption. Since $Y_{1,i},\dots,Y_{k,i}$ are i.i.d., we
see that $U_i,W_i,V_i$ are independent, that $U_i,W_i$ both have density
$g_i^{*r}$ and that $V_i$ has density $g_i^{*(k-r)}$, where
$g_i^{*r},g_i^{*(k-r)}$ are the $r$ and $k-r$-fold convolutions of $g_i$.

We now split the analysis into a worst-case bound for any $r \in [k]$ and an average case bound for $r \in J$.

\paragraph{\bf Worst-case Upper Bound.} Since the convolution operation does not increase the maximum density, we note that $\max_{x \in \R} g_i^{*r}(x) \leq M$, $i \in [m]$. From here, we see that
\begin{align*}
\Prob[U_i + V_i &\in [D_i-\theta, D_i+\theta], W_i+V_i \in [D_i -\theta, D_i+\theta]] \\
&= \int_{-\infty}^\infty \Prob[U_i \in [D_i-\theta-y, D_i+\theta-y]]^2 g_i^{*(k-r)}(y) d{\rm y} \\
&= \int_{-\infty}^\infty \left(\int_{-\theta}^{\theta} g^{*r}_i(D_i-y+x) d{\rm x}\right)^2 g_i^{*(k-r)}(y) d{\rm y} \\
&\leq \int_{-\infty}^\infty \left(\int_{-\theta}^{\theta} M d{\rm x}\right)^2 g_i^{*(k-r)}(y) d{\rm y} = (2M\theta)^2.
\end{align*}
By~\eqref{eq:cor-prob}, this gives $\Prob[E_{K_0} \cap E_{K_r}] \leq (2\theta M)^{2m}$ for $r \in [k]$. The worst case upper bound is therefore
$$\frac{\Prob[E_{K_0} \cap E_{K_r}]}{\Prob[E_{K_0}]^2} \leq e^2 \left( \frac{(2\theta M) \sqrt{2\pi k}}{2\theta } \right)^{2m} \leq e^2 \left( M\sqrt{2\pi k} \right)^{2m}.$$

Before moving on to the average case bound for $\Prob[E_{K_0} \cap E_{K_r}]^2/\Prob[E_{K_0}]^2$ for $r \in J$, we first upper bound the total contribution to~\eqref{eq:ratio-decomp} of the terms associated with $r \in [k] \setminus J$.  Recall that $J = [k] \cap \left( k-\frac{(1+\varepsilon)k}{a}, k - \frac{(1-\varepsilon)k}{a} \right)$ for $\varepsilon = \frac{1}{2}$. For $r \in J$, note that $|K_0 \cap K_r| \in (\frac{k}{2a},\frac{3k}{2a})$. That is, $|K_0 \cap K_r|$ is close to the average intersection size $\frac{k}{a}$ for two uniform $k$-subsets $K,K'$ of $[ak]$. Applying \Cref{lem:intersection} together with the worst-case upper bound for indices $r \in [k] \setminus J$, assuming that $k$ is large enough (to be specified below), we get that

\begin{align}
\sum\limits_{r \in [k] \setminus J} \Prob[\lenfit{K \cap K'} = k-r] &\frac{\Prob[E_{K_0} \cap E_{K_r}]}{\Prob[E_{K_0}]^2} \nonumber \\ &\leq e^2 (M\sqrt{2\pi k})^{2m} \Prob \left[\len{K \cap K'} \not\in (\frac{k}{2a},\frac{3k}{2a})\right] \nonumber \\
& \leq 4e^2 (M\sqrt{2\pi k})^{2m} \exp(-\frac{k}{12a}) \leq 1 \label{eq:worst-case-bnd}.
\end{align}
To establish the last inequality, using that $a = \ceil{2\sqrt{m}} \leq
3\sqrt{m}$ and $x/c \geq \log x$ for $x \geq 2c \log c$ for $c \geq 1$, it suffices to show that the logarithm of the last expression is non-positive:
\begin{align*}
&\phantom{\leq}\log(4e^2) + 2m \left(\log M + \frac{\log (2\pi k)}{2} \right) - \frac{k}{12a} \\&\leq m \left( \log(8e^2\pi) + 2\log M + \log k - \frac{k}{36m^{3/2}} \right) \\
           &\leq m \left( \log(8e^2\pi) + 2\log M - \frac{k}{72m^{3/2}} \right) \quad \left( \text{ if } k \geq 144m^{3/2}\log (72 m^{3/2}) \right), \\
           &\leq 0 \quad \left( \text{ if } k \geq 72m^{3/2}(2\log M
+ \log(8e^2\pi)) \right).
\end{align*}
Simplifying the conditions above,~\eqref{eq:worst-case-bnd} holds for
$$ k \geq \max \{144m^{3/2}(\log M + 3), 216m^{3/2}(\log m + 3)\}.$$

\paragraph{\bf Average-case Upper Bound.}

Observe that for $r \in J$, we have $r = k - \frac{k}{\alpha}$ for $\alpha
\in \left[ \frac{4}{3}\sqrt{m}, 4\sqrt{m} + 2\right]$. To derive the average
case bound, we will apply \Cref{lem:technical} to $U_i,W_i,V_i, D_i$ and $\theta$ for each $i
\in [m]$ with parameters $k$,$r$,$\alpha$ and $\gamma$, $k_0$. We first show
that the requisite conditions are satisfied for $k$ large enough. Firstly,
recall that $\theta \in [0,1/\sqrt{k}]$, $\alpha \geq 4/3\sqrt{m} \geq 4/3$, and that $Y_{1,i},\dots,Y_{k,i}$ are i.i.d. $(\gamma,k_0)$-Gaussian
convergent random variables. Assuming $k \geq (4\sqrt{m}+2)k_0$, we have that $\min
\{r,k-r\} = k \min \{\frac{1}{\alpha},(1-\frac{1}{\alpha})\} \geq k_0$. Lastly,
assuming $k \geq 280(\gamma+1)(4\sqrt{m}+2)^{7/2}$, we also have $k \geq
280(\gamma+1)\alpha^{7/2}$ by assumption on $\alpha$. Therefore, for $r \in J$, we may apply
\Cref{lem:technical} to~\eqref{eq:cor-prob} to conclude that

\begin{align*}
\frac{\Prob[E_{K_0} \cap E_{K_r}]}{\Prob[E_{K_0}]^2} &\leq \left( \left( 1 + \frac{16}{9\alpha^2} \right) \frac{4\theta^2}{2\pi k}\right)^m e^2\left(\frac{\sqrt{2\pi k}}{2\theta} \right)^{2m} \leq e^2 \left(1 + \frac{16}{9\alpha^2}\right)^m \\
 &\leq e^2 \left(1 + \frac{1}{m}\right)^m \leq e^3.
\end{align*}
Because $4\sqrt{m}+2 \leq 6 \sqrt{m}$, we simplify the condition
$k \geq 280(\gamma+1)(4\sqrt{m}+2)^{7/2}$ to the stronger condition $k \geq
150000(\gamma + 1)m^{7/4}$.

\paragraph{\bf In Conclusion:}

Combining the bounds from the previous sections, assuming that the constraints on $k$ are all satisfied, we get that:

\begin{align*}
\E[Z^2]/\E[Z]^2 &= \frac{1}{\E[Z]} + \sum_{r=1}^k \Prob[|K \cap K'|=k-r]\frac{\Prob[E_{K_0} \cap E_{K_r}]}{\Prob[E_{K_0}]^2} \\
&\leq e + \sum_{r \in [k] \setminus J} \Prob[|K \cap K'|=k-r]\frac{\Prob[E_{K_0} \cap E_{K_r}]}{\Prob[E_{K_0}]^2} \\
& \quad \quad \quad \quad  + \sum_{r \in J} \Prob[|K \cap K'|=k-r]\frac{\Prob[E_{K_0} \cap E_{K_r}]}{\Prob[E_{K_0}]^2}\\
&\leq e + \sum_{r \in [k] \backslash J} \Prob[|K \cap K'|=k-r]e^2\left(M \sqrt{2\pi k} \right)^m \\
&\quad \quad \quad \quad + \sum_{r \in J} \Prob[|K \cap K'|=k-r]e^3 \\
&\leq e + 1 +  e^3 \leq 25.
\end{align*}

Aggregating the constraints on $k$, the above gives a lower bound of $1/25$ for the success probability whenever $k$ satisfies:
\begin{align*}
k \geq \max \{ 2m \log m, k_0, &~4\sqrt{2 \pi} e (\gamma +1) m,~144m^{3/2} (\log M + 3),~216m^{3/2}(\log m + 3), \\ & (4\sqrt{m}+2)k_0,~150000(\gamma + 1)m^{7/4} \}.
\end{align*}

Removing the dominated terms, it suffices for $k$ to satisfy
$$
k \geq \max \{ (4\sqrt{m} + 2)k_0,~144 m^{\frac{3}{2}}(\log M + 3),~150000(\gamma + 1)m^{\frac{7}{4}} \},$$
as needed. \qed
\end{proof}

\begin{proof}[\Cref{lem:technical}]
For convenience, let $P = \Prob[U + V \in [D -\theta, D +\theta], W+V \in [D -\theta, D +\theta]]$, and define the normalized sums: $\bar{U} = U/\sqrt{r}, \bar{V} = V/\sqrt{k-r}, \bar{W} = W/\sqrt{r}$. Let us denote the density of $\bar{U}$ and $\bar{W}$ by $\bar{g}$, and the density of $\bar{V}$ by $\bar{h}$. $\bar{U}, \bar{W}$ are independent and identically distributed, from which we see that
\begin{align*}
P &= \Prob \left[\bar{U}\sqrt{r} + \bar{V}\sqrt{k-r} \in [D -\theta, D+\theta], \bar{W} \sqrt{r} + \bar{V}\sqrt{k-r} +  \in [D -\theta, D +\theta] \right]\\
&= \int\limits_{-\infty}^\infty \Prob \left[\bar{U} \in \left[ \frac{D - \theta}{\sqrt{r}} - y\sqrt{\frac{k-r}{r}}, \frac{D + \theta}{\sqrt{r}} - y\sqrt{\frac{k-r}{r}} \right] \right]^2 \bar{h}(y) \\
&= \int\limits_{-\infty}^\infty  \left( \int\limits_{-\frac{\theta}{\sqrt{r}}}^{\frac{\theta}{\sqrt{r}}} \bar{g}\left(\frac{D}{\sqrt{r}} - \frac{y}{\sqrt{\alpha - 1}} + x \right) \mathrm{d}x \right)^2 \bar{h}(y).
\end{align*}
 Since $Y$ is $(\gamma, k_0)$-Gaussian convergent and $\min \{r,k-r\} \geq k_0$, we have that
$$\len{\bar{g}(x) - \phi(x)} \leq \frac{\gamma}{r} = \frac{\alpha}{\alpha-1}\frac{\gamma}{k}, \quad \len{\bar{h}(x) - \phi(x)} \leq \frac{\gamma}{k-r} = \alpha\frac{\gamma}{k}, \quad \forall x \in \R.$$
Using the above, we upper bound $P$ as follows:
\begin{align*}
P &= \int\limits_{-\infty}^\infty \left( \int\limits_{-\frac{\theta}{\sqrt{r}}}^{\frac{\theta}{\sqrt{r}}} \bar{g}\left(\frac{D}{\sqrt{r}} - \frac{y}{\sqrt{\alpha - 1}} + x \right) \mathrm{d}x \right)^2 \bar{h}(y) \mathrm{d}y\\
&\leq \int\limits_{-\infty}^\infty \left( \int\limits_{-\frac{\theta}{\sqrt{r}}}^{\frac{\theta}{\sqrt{r}}} \varphi \left(\frac{D}{\sqrt{r}} - \frac{y}{\sqrt{\alpha - 1}} + x \right) + \frac{\gamma}{k}\frac{\alpha}{\alpha-1} \mathrm{d}x \right)^2 \bar{h}(y) \mathrm{d}y\\
&\leq \int\limits_{-\infty}^\infty \left( \int\limits_{-\frac{\theta}{\sqrt{r}}}^{\frac{\theta}{\sqrt{r}}} \varphi \left(\frac{D}{\sqrt{r}} - \frac{y}{\sqrt{\alpha - 1}} \right) + \frac{2\gamma}{k} + \frac{\theta}{\sqrt{r}} \mathrm{d}x \right)^2 \bar{h}(y) \mathrm{d}y \quad \left(~\varphi \text{ is 1-Lipschitz } \right)\\
&\leq \frac{4\theta^2}{r} \int\limits_{-\infty}^\infty \left( \varphi \left(\frac{D}{\sqrt{r}} - \frac{y}{\sqrt{\alpha - 1}} \right) + \frac{2\gamma}{k} + \frac{\theta}{\sqrt{r}} \right)^2 \bar{h}(y) \mathrm{d}y\\
&\leq \frac{4\theta^2}{r} \left( 2\varphi(0)\left( \frac{2\gamma}{k} + \frac{\theta}{\sqrt{r}} \right) + \left( \frac{2\gamma}{k} + \frac{\theta}{\sqrt{r}} \right)^2 + \int\limits_{-\infty}^\infty \varphi \left(\frac{D}{\sqrt{r}} - \frac{y}{\sqrt{\alpha - 1}} \right)^2 \bar{h}(y) \mathrm{d}y \right)\\
&\leq \frac{4\theta^2}{r} \left( \frac{2(\gamma + 1)}{k} + \left( \frac{2(\gamma + 1)}{k} \right)^2 + \int\limits_{-\infty}^\infty \varphi \left(\frac{D}{\sqrt{r}} - \frac{y}{\sqrt{\alpha - 1}} \right)^2 \bar{h}(y) \mathrm{d}y \right),
\end{align*}
where the last inequality follows from $\theta \leq \frac{1}{\sqrt{k}}$, $r = (1-\frac{1}{\alpha})k \geq k/4$ for $\alpha \geq 4/3$ and $\varphi(0) \leq \frac{1}{2}$. Next, we upper bound the term $Q = \int\limits_{-\infty}^\infty \varphi \left(\frac{D}{\sqrt{r}} - \frac{y}{\sqrt{\alpha - 1}} \right)^2 \bar{h}(y) \mathrm{d}y$.
\begin{align*}
Q &= \int\limits_{-\infty}^\infty \varphi \left(\frac{D}{\sqrt{r}} - \frac{y}{\sqrt{\alpha - 1}} \right)^2 \bar{h}(y) \mathrm{d}y\\
&\leq \int\limits_{-\infty}^\infty \varphi \left(\frac{D}{\sqrt{r}} - \frac{y}{\sqrt{\alpha - 1}} \right)^2 \left(\varphi(y) + \frac{\alpha\gamma}{k} \right) \mathrm{d}y \\
&= \frac{\alpha \gamma}{k} \int\limits_{-\infty}^\infty \varphi \left(\frac{D}{\sqrt{r}} - \frac{y}{\sqrt{\alpha - 1}} \right)^2 \mathrm{d}y + \int\limits_{-\infty}^\infty \varphi \left(\frac{D}{\sqrt{r}} - \frac{y}{\sqrt{\alpha - 1}} \right)^2 \varphi(y) \mathrm{d}y\\
&= \frac{\alpha \sqrt{\alpha -1}\gamma}{\sqrt{2} k} \int\limits_{-\infty}^\infty \varphi \left(\frac{y}{\sqrt{2}} \right)^2 \mathrm{d}y + \int\limits_{-\infty}^\infty \varphi \left(\frac{D}{\sqrt{r}} - \frac{y}{\sqrt{\alpha - 1}} \right)^2 \varphi(y) \mathrm{d}y \\
&= \frac{\alpha \sqrt{\alpha -1}\gamma}{2\sqrt{\pi} k}  + \frac{1}{(2\pi)^{3/2}}\int\limits_{-\infty}^\infty \exp \left( -\left(\frac{D}{\sqrt{r}} - \frac{y}{\sqrt{\alpha - 1}} \right)^2 - \frac{y^2}{2} \right) \mathrm{d}y \\
&= \frac{\alpha \sqrt{\alpha -1}\gamma}{2\sqrt{\pi} k}  + \frac{1}{(2\pi)^{3/2}}\int\limits_{-\infty}^\infty \exp \left( -\frac{y^2}{2}\left(1 + \frac{2}{\alpha - 1} \right) + \frac{2yD}{\sqrt{r(\alpha - 1)}} - \frac{D^2}{r} \right) \mathrm{d}y \\
&\leq \frac{\alpha \sqrt{\alpha -1}\gamma}{2\sqrt{\pi} k} \\& + \frac{1}{(2\pi)^{3/2}}\int\limits_{-\infty}^\infty \exp \left( -\frac{1}{2}\left(y\sqrt{\frac{\alpha+1}{\alpha -1}} - \frac{2 D}{\sqrt{(\alpha +1)r}} \right)^2 + \frac{2D^2}{(\alpha+1)r} - \frac{D^2}{r} \right) \mathrm{d}y \\
&\leq \frac{\alpha \sqrt{\alpha -1}\gamma}{2\sqrt{\pi} k}  + \frac{1}{(2\pi)^{3/2}}\int\limits_{-\infty}^\infty \exp \left( -\frac{1}{2} \left(y \sqrt{\frac{\alpha+1}{\alpha-1}}\right)^2 \right) \mathrm{d}y  \\&\quad \left(\text{because } \alpha \geq \frac{4}{3} \Rightarrow \frac{2D^2}{(\alpha+1)r} - \frac{D^2}{r} \leq 0 \right)\\
&= \frac{\alpha \sqrt{\alpha -1}\gamma}{2\sqrt{\pi} k} + \frac{1}{2\pi}\sqrt{\frac{\alpha - 1}{\alpha+1}}.
\end{align*}

The final expression is:
$$P \leq \frac{4\theta^2}{r} \left(  \frac{2(\gamma + 1)}{k} + \left( \frac{2(\gamma + 1)}{k} \right)^2 + \frac{\alpha \sqrt{\alpha -1}\gamma}{2\sqrt{\pi} k} + \frac{1}{2\pi}\sqrt{\frac{\alpha - 1}{\alpha+1}}\right).$$
Since $r = \frac{\alpha -1}{\alpha} k$, we have
$$P \leq \frac{4\theta^2}{ k} \left( 1 + \frac{1}{\alpha - 1} \right) \left(  \frac{2(\gamma + 1)}{k} + \left( \frac{2(\gamma + 1)}{k} \right)^2 + \frac{\alpha \sqrt{(\alpha -1)}\gamma}{2\sqrt{\pi}k} + \frac{1}{2\pi}\sqrt{\frac{\alpha - 1}{\alpha+1}}\right).$$

We require $P \leq \frac{4\theta^2}{2\pi k} \left( 1 + \frac{16}{9\alpha^2} \right)$. Using $\alpha \geq 4/3$, observe that
\begin{align*}
\left( 1 + \frac{1}{\alpha - 1} \right) \sqrt{\frac{\alpha - 1}{\alpha+1}} = \sqrt{1+\frac{1}{\alpha^2 - 1}} &\leq 1 + \frac{1}{2(\alpha^2 - 1)}  \\&´\leq 1 + \frac{16}{14\alpha^2} \quad (\text{ since } 14 \alpha^2 \leq 32(\alpha^2 - 1)~).\end{align*}
So all we need is
$$ \left(1 + \frac{1}{\alpha - 1} \right) \left(  \frac{2(\gamma + 1)}{k} + \left( \frac{2(\gamma + 1)}{k} \right)^2 +\frac{\alpha \sqrt{(\alpha -1)}\gamma}{2\sqrt{\pi}k} \right) \leq \frac{4}{(2\pi)7 \alpha^2},$$
because $\frac{16}{14} + \frac{8}{14} \leq \frac{16}{9}$. Using $\alpha \geq \frac{4}{3}$, we simplify the condition to
$$ \frac{2(\gamma + 1)}{k} + \left( \frac{2(\gamma + 1)}{k} \right)^2 +\frac{\alpha \sqrt{(\alpha -1)}\gamma}{2\sqrt{\pi}k} \leq \frac{1}{(2\pi)7 \alpha^2}.$$
A stronger condition is $3 \cdot \frac{\alpha^{3/2} 2(\gamma+1)}{k} \leq
\frac{1}{(2\pi)7\alpha^2}$, which is satisfied whenever $k \geq 280 (\gamma +1) \alpha^{\frac{7}{2}}$. \qed
\end{proof}

 \section{Bounding the Tree Size in Branch-and-Bound}
 \label{sec:bnb-adapt}
 
The goal of this section is to prove~\Cref{thm:meta-logcon} from the
introduction. Our main technical lemma, which will allow us to upper
bound~\eqref{eq:bnb-size}, is given below. 

 \begin{lemma}
 \label{lem:knap-bnd}
Let $d \in \N, n \geq 100 d$, $G \geq 0$, and $W = (W_1,\dots,W_n) \in \R^{d \times n}$ be a
matrix whose columns are independent logconcave random vectors with
identity covariance. Then, for $\delta \in (0,1)$, with
probability at least $1-\delta-e^{-n/5}$, we have that
 \[
 \max_{\|u\|_2=1} |\{x \in \{0,1\}^n: \sum_{i=1}^n x_i |u^\T W_i| \leq G\}| \leq 60 (357 n^6)^{d+1} e^{2\sqrt{2nG}}/\delta. 
 \] 
 \end{lemma}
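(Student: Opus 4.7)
My plan is to bound $\mathrm{count}(u) := |\{x \in \{0,1\}^n : \sum_i x_i |u^\T W_i| \leq G\}|$ for each fixed $u \in S^{d-1}$ by an exponential Markov (MGF) argument, and then to upgrade this to a uniform-in-$u$ estimate through a union bound over an $\epsilon$-net of $S^{d-1}$. The net transition step will require a polynomial high-probability bound on $\max_i \|W_i\|_2$, which I will obtain from the 1D logconcave tail estimate of \Cref{thm:log-main} via a separate, coarser spherical net.

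\textbf{MGF bound for fixed $u$.} For any $u \in S^{d-1}$, the marginal $Z_i := u^\T W_i$ is 1-dimensional logconcave (\Cref{thm:log-marg}) with $\Var(Z_i) = u^\T I u = 1$, so by \Cref{thm:log-main} its density is at most $1$, and $Y_i := |Z_i|$ has density at most $2$. For $t > 0$, bounding $\mathbf{1}\{\sum_i x_i Y_i \leq G\} \leq e^{t(G-\sum_i x_i Y_i)}$ and summing over $x \in \{0,1\}^n$ gives
\[
\mathrm{count}(u) \leq e^{tG}\prod_{i=1}^n (1 + e^{-tY_i}).
\]
By independence and the density bound $\E[e^{-tY_i}] \leq 2\int_0^\infty e^{-ty}\,dy = 2/t$, taking expectations yields $\E[\mathrm{count}(u)] \leq e^{tG}(1+2/t)^n \leq e^{tG+2n/t}$. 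Optimizing at $t = \sqrt{2n/G}$ produces $\E[\mathrm{count}(u)] \leq e^{2\sqrt{2nG}}$, and Markov's inequality delivers $\Prob[\mathrm{count}(u) \geq e^{2\sqrt{2nG}}/\delta'] \leq \delta'$ for any $\delta' \in (0,1)$.

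\textbf{$\epsilon$-net transition and magnitude control.} I will take an $\epsilon$-net $N_\epsilon \subseteq S^{d-1}$ of size at most $(1+2/\epsilon)^d$. For any $u \in S^{d-1}$ with nearest net point $u' \in N_\epsilon$ satisfying $\|u-u'\|_2 \leq \epsilon$, the reverse triangle inequality gives
\[
\Big|\sum_i x_i(|u^\T W_i| - |u'^\T W_i|)\Big| \leq \epsilon \sum_{i=1}^n \|W_i\|_2 =: \epsilon R,
\]
so that $\{\sum x_i|u^\T W_i| \leq G\} \subseteq \{\sum x_i |u'^\T W_i| \leq G + \epsilon R\}$. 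To control $R$, I take a $\tfrac12$-net $N'$ of $S^{d-1}$ of size at most $5^d$, use $\|W_i\|_2 \leq 2\max_{v \in N'}|v^\T W_i|$, and apply the sub-exponential tail $\Prob[|v^\T W_i - v^\T \E W_i| \geq s] \leq e^{-s+1}$ from \Cref{thm:log-main}. A union bound over $v \in N'$ and $i \in [n]$ with $s = \Theta(n)$ (available since $n \geq 100d$) yields $\max_i \|W_i\|_2 \leq R_0 = \mathrm{poly}(n,d)$, and hence $R \leq nR_0 = \mathrm{poly}(n,d)$, with probability at least $1 - e^{-n/5}$.

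\textbf{Combination and main obstacle.} Setting $\delta' := \delta/|N_\epsilon|$ and union-bounding over $u' \in N_\epsilon$, off a bad event of total probability at most $\delta + e^{-n/5}$ I obtain
\[
\max_{u \in S^{d-1}} \mathrm{count}(u) \leq e^{2\sqrt{2n(G+\epsilon R)}} \cdot |N_\epsilon|/\delta.
\]
Choosing $\epsilon = \Theta(1/(nR))$ makes $\sqrt{2n\epsilon R} = O(1)$, so $e^{2\sqrt{2n(G+\epsilon R)}} \leq e^{O(1)}\cdot e^{2\sqrt{2nG}}$; with $R = \mathrm{poly}(n,d)$ this forces $|N_\epsilon| \leq (\mathrm{poly}(n,d))^d$, which after careful constant-tracking should fit inside the claimed factor $60(357n^6)^{d+1}e^{2\sqrt{2nG}}/\delta$. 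The main obstacle I anticipate is the uniform magnitude bound: because \Cref{thm:log-main} only supplies sub-exponential (not sub-Gaussian) concentration, reaching failure probability $e^{-n/5}$ forces $s$ as large as $\Theta(n)$, and the mean shift $\|\E W_i\|_2$ entering $R_0$ also needs to be controlled---most likely by a case split between $\|\E W_i\|_2$ small (where $R_0 = \mathrm{poly}(n,d)$ directly) and $\|\E W_i\|_2$ very large (where $|u^\T W_i|$ is typically so big that $\mathrm{count}(u)$ is trivially bounded)---in order to keep all accumulated polynomial factors inside $(357 n^6)^{d+1}$.
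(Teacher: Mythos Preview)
Your MGF bound for a fixed $u$ is correct and coincides with the paper's \Cref{lem:one-knapsack}. The gap is in the net step.

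You try to control the transition error by $\epsilon\sum_i\|W_i\|_2$ and then bound $\max_i\|W_i\|_2$. But the lemma places \emph{no} hypothesis on the means $\mu_i:=\E[W_i]$, so $\|W_i\|_2$ can be arbitrarily large; the sub-exponential tail from \Cref{thm:log-main} only controls $W_i-\mu_i$. Your suggested fix, a case split on $\|\mu_i\|_2$, does not work as written: even with $\|\mu_i\|_2$ enormous, for a particular $u$ you may have $u^\T\mu_i=0$, so ``$|u^\T W_i|$ is typically huge'' is false for that $u$. The split has to be made \emph{per direction} (on the sign/size of $u^\T\mu_i$), and then your net must be compatible with the split, i.e.\ $u$ and its net point $u'$ must land in the same case for every $i$. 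A plain $\epsilon$-net of $S^{d-1}$ does not achieve this.

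The paper's proof fixes exactly this. First, it centers: by \Cref{lem:logcon-matrix} one has $\max_{\|u\|=1}\|u^\T(W-\E[W])\|_1\le 4n$ with probability $\ge 1-e^{-n/5}$, which is fine because the mean is subtracted. Second, instead of a spherical $\epsilon$-net it takes an $\epsilon$-net of each full-dimensional cell of the hyperplane arrangement $\{u:\mu_i^\T u=j/(2n^2)\}$ for $i\in[n]$, $|j|\le 10n^3$. Inside a cell one has, for every $i$, either $|\mu_i^\T v|>5n$ for all $v$ in the cell (then, using $G\le n$ and the centered bound, $x_i=0$ for every $x\in K(u,G)$, so column $i$ is irrelevant), or $\mu_i^\T v$ lies in a fixed interval of width $1/(2n^2)$ (then $|(u-u')^\T\mu_i|\le 1/(2n^2)$). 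Combining the two pieces gives $K(u,G)\subseteq K(u',G+1/n)$ for the nearest net point $u'$ in the same cell. The number of cells is at most $(21n^4)^{d+1}$ and the per-cell net has size at most $(1+16n^2)^d$, which together produce the $(357n^6)^{d+1}$ factor; Markov over the net with $\delta'=\delta/|\mathcal K|$ then finishes the proof. In short, your case-split intuition is right, but it must be baked into the net via the hyperplane arrangement, and the transition error must be measured against the centered matrix $W-\E[W]$ rather than $\|W_i\|_2$.
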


Before proving~\Cref{lem:knap-bnd}, we first show how to
derive~\Cref{thm:meta-logcon} from~\Cref{thm:bnb} and~\Cref{lem:knap-bnd}.

\begin{proof}[of~\Cref{thm:meta-logcon}]
 For any $\lambda \in \R^m$, we see that
 \begin{align*}
 \{x \in
 \{0,1\}^n: \sum_{i=1}^n x_i |(c-A^\T \lambda)_i| \leq G\}
 &= \{x \in \{0,1\}^n: \sum_{i=1}^n x_i |(W^\T(1,-\lambda))_i| \leq tG\} \\
 &\subseteq \{x \in \{0,1\}^n: \sum_{i=1}^n x_i |(\frac{W^\T(1,-\lambda)}{\|(1,-\lambda)\|_2} )_i| \leq G\}.
 \end{align*}
 Given the above, we have that
 \begin{align}
 \max_{\lambda \in \R^m} |\{x \in
 \{0,1\}^n:& \sum_{i=1}^n x_i |(c-A^\T \lambda)_i| \leq G\}| \nonumber \\
 &\leq 
 \max_{\|u\|_2=1} |\{x \in
 \{0,1\}^n: \sum_{i=1}^n x_i |( W^\T u)_i| \leq G\}|.
 \label{eq:gb-ub-1}
 \end{align}
 
 Applying \Cref{lem:knap-bnd} to $W$, $\delta$ and $d = m+1$,
 with probability at least $1-\delta-e^{-n/5}$, we get that 
 \begin{align}
 \max_{\|u\|_2=1} &|\{x \in
 \{0,1\}^n: \sum_{i=1}^n x_i |(u^\T W)_i| \leq tG\}| \nonumber \\
 &\leq 60 (357 n^6)^{m+2} e^{2\sqrt{2nG}}/\delta = n^{O(m)} e^{2\sqrt{2nG}}/\delta. 
 \label{eq:bg-ub-2}
 \end{align}
 
By \Cref{thm:bnb} and the union bound, with probability at least
\[
1-\Pr_{A,c}[{\rm IPGAP}(A,b,c) \geq G]-\delta-e^{-n/5},
\] 
we have that the size of the branch-and-bound tree is at most 
\[
 n^{O(m)} e^{2\sqrt{2 G n}}/\delta. \quad \qed
\]
\end{proof}
 
 We now sketch the high level ideas of the proof of~\Cref{lem:knap-bnd}. 
 For $g \geq 0$, $u \in \mathbb{S}^{d-1}$, define the knapsack
 \[
 K(u,g) := \{x \in \{0,1\}^n: \sum_{i=1}^n x_i |u^\T W_i| \leq g\}.
 \]
 
 The proof of the lemma will proceed in two steps. In the first step, we
 show that the expected size of the knapsack polytope $K(u,G)$ satifies
 $\E_W[|K(u,G)|] \leq e^{2\sqrt{2nG}}$, for any fixed $u \in \mathbb{S}^{d-1}$
 and $G \geq 0$ (see~\Cref{lem:one-knapsack}). In the
 second step, we extend this bound to all $u \in \mathbb{S}^{d-1}$ via the
 union bound applied to a carefully constructed net of knapsacks of the form
 $K(u,G+1/n)$ for $u \in \mathcal{K}$, where $\mathcal{K} \subseteq \mathbb{S}^{d-1}$ will have
 size $|N| = n^{O(d)}$. The slight increase in capacity $G \rightarrow G+1/n$
 is to ensure that every $K(u,G)$ knapsack is contained in some $K(u',G+1/n)$
 knapsack, for some $u' \in \mathcal{K}$. To ensure this, we rely
 on~\Cref{lem:logcon-matrix} below to help us control the distance
 between knapsacks induced by nearby $u$'s.   
 
 One complicating factor in the construction of $\mathcal{K}$ is the lack of
 any bound on the norms of the column means $\mu_i := \E[W_i]$, $i \in [n]$.
 To deal with arbitrary means, we will make use of a hyperplane arrangement
 $\mathcal{H}$ induced by the $\mu_i$'s, such that for any full-dimensional
 cell $C$ of $\mathcal{H}$ and $i \in [n]$, we have that $|u^\T \mu_i|$ for $u
 \in C$ either lies in a small interval or is so large that $x_i = 0$ for any
 $x \in K(u,G)$.
 
 We now give our main bound on the expected size of knapsack polytopes with
 random weights.
 
 \begin{lemma}
 \label{lem:one-knapsack}
 Let $\omega_1,\dots,\omega_n \in \R$ be independent continuous random
 variables with maximum density at most $1$. Then, for any $g \geq 0$, we have
 \[
 \E[|\{x \in \{0,1\}^n: \sum_{i=1}^n x_i |\omega_i| \leq g\}|] \leq e^{2\sqrt{2 n g}}.
 \]
 \end{lemma}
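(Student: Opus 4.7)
The plan is to apply linearity of expectation and then bound the resulting sum using the density assumption together with a clean term-by-term comparison against the Taylor series of the target exponential. By linearity,
\[
\E\left[|\{x \in \{0,1\}^n: \sum_{i=1}^n x_i |\omega_i| \leq g\}|\right] = \sum_{S \subseteq [n]} \Pr\left[\sum_{i \in S} |\omega_i| \leq g\right],
\]
so it suffices to control the right-hand side. Since the probabilities only depend on $|S|$ through a uniform bound I will derive, this reduces to a one-parameter sum over $k = |S|$.

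Next, I would bound $\Pr[\sum_{i \in S} |\omega_i| \leq g]$ for a fixed $S$ with $|S|=k$. Since $\omega_i$ is continuous with density at most $1$, the random variable $|\omega_i|$ is supported on $[0,\infty)$ with density $t \mapsto f_{\omega_i}(t) + f_{\omega_i}(-t) \leq 2$. By independence, the joint density of $(|\omega_i|)_{i \in S}$ on $\R_+^k$ is bounded by $2^k$, so integrating this bound over the simplex $\{y \in \R_+^k: \sum_i y_i \leq g\}$, whose volume is $g^k/k!$, yields
\[
\Pr\left[\sum_{i \in S} |\omega_i| \leq g\right] \leq \frac{(2g)^k}{k!}.
\]

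Summing over $S$ using $\binom{n}{k} \leq n^k/k!$ gives
\[
\sum_{S \subseteq [n]} \Pr\left[\sum_{i \in S} |\omega_i| \leq g\right] \leq \sum_{k=0}^n \binom{n}{k} \frac{(2g)^k}{k!} \leq \sum_{k=0}^\infty \frac{(2ng)^k}{(k!)^2}.
\]
The remaining step is to show $\sum_{k \geq 0} y^k/(k!)^2 \leq e^{2\sqrt{y}}$ for $y := 2ng \geq 0$. Expanding $e^{2\sqrt{y}} = \sum_{j \geq 0} (2\sqrt{y})^j/j!$ and restricting to even $j = 2k$ gives $\sum_{k \geq 0} 4^k y^k/(2k)!$; using the standard bound $\binom{2k}{k} \leq 4^k$ we obtain $1/(k!)^2 = \binom{2k}{k}/(2k)! \leq 4^k/(2k)!$, so every term of the left series is dominated by the corresponding even-indexed term of $e^{2\sqrt{y}}$, and the odd terms contribute non-negatively. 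This yields the claimed bound $e^{2\sqrt{2ng}}$.

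I do not expect a significant obstacle: the density-times-volume bound is elementary given the density-at-most-$1$ hypothesis, and the Bessel-type series inequality is a one-line comparison via $\binom{2k}{k} \leq 4^k$. The only point requiring some care is verifying that the correct density bound for $|\omega_i|$ is $2$ (not $1$), which is why the final exponent is $2\sqrt{2ng}$ rather than $2\sqrt{ng}$.
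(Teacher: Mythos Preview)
Your proof is correct, but it takes a genuinely different route from the paper's. The paper uses a Chernoff-style argument: it writes $|K| \leq e^{\gamma g}\prod_{i=1}^n(1+e^{-\gamma|\omega_i|})$, bounds $\E[e^{-\gamma|\omega_i|}] \leq 2/\gamma$ directly from the density hypothesis, takes expectations to get $\E[|K|] \leq e^{\gamma g + 2n/\gamma}$, and then optimizes over $\gamma$ to land exactly on $e^{2\sqrt{2ng}}$. Your approach instead expands by linearity, bounds each $\Pr[\sum_{i\in S}|\omega_i|\leq g]$ by the simplex-volume estimate $(2g)^{|S|}/|S|!$, and then controls the resulting modified-Bessel-type series $\sum_k (2ng)^k/(k!)^2$ term-by-term against the even Taylor coefficients of $e^{2\sqrt{2ng}}$ via $\binom{2k}{k}\leq 4^k$. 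Both arguments exploit the density bound in the same place (to pick up the factor~$2$ per coordinate), but yours is more combinatorial and avoids the optimization step entirely, while the paper's MGF approach is shorter and more mechanical once one spots the product bound. It is a nice coincidence that the two routes yield the identical constant in the exponent.
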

 \begin{proof}
 Let $K := \{x \in \{0,1\}^n: \sum_{i=1}^n x_i |\omega_i| \leq g\}$. For any $\gamma \geq 0$, we first note that 
 \begin{equation}
 \label{eq:knp-exp-bnd}
 |K| \leq e^{\gamma g} \prod_{i=1}^n (1+e^{-\gamma \omega_i}).
 \end{equation}
 
 To see this, note that each $x \in \{0,1\}^n$ can be associated with the
 term $e^{\gamma (g-\sum_{i=1}^n x_i \omega_i)}$ on the right hand side (after
 expanding out the product) and that each term with $x \in K$ contributes at
 least $1$. 
 
 For each $i \in [n]$, letting $f_i: \R \rightarrow \R_+$ be the probability density of $\omega_i$, we have that
 \begin{equation}
 \E[e^{-\gamma |\omega_i|}] = \int_0^\infty e^{-\gamma x}(f_i(x)+f_i(-x)) dx 
 \leq 2 \int_0^\infty e^{-\gamma x} dx = \frac{2}{\gamma}, \label{eq:neg-exp}
 \end{equation}
 where we have used the assumption that $\max_{x \in \R} f_i(x) \leq 1$, $\forall i \in [n]$.
 
 Combining~\eqref{eq:knp-exp-bnd},~\eqref{eq:neg-exp}, using that
 $\omega_1,\dots,\omega_n$ are independent, we get that
 \[
 \E[|K|] \leq e^{\gamma g} \prod_{i=1}^n \E[1 + e^{-\gamma \omega_i}]
              \leq e^{\gamma g} (1 + \frac{2}{\gamma})^n
              \leq e^{\gamma g + \frac{2n}{\gamma}}. 
 \]
 Setting $\gamma = \sqrt{\frac{2n}{g}}$, we get that $\E[|K|] \leq
 e^{2\sqrt{2g n}}$, as claimed. \qed
 \end{proof}
We remark that an $e^{\Omega(\sqrt{nG})}$ dependence above is necessary.
This holds for $\omega_1,\dots,\omega_n$ uniform in $[0,1]$ and $G \leq n$.
Letting $S = \{i \in [n]: \omega_i \leq \sqrt{G/n}\}$, note that any subset of
at most $\floor{\sqrt{nG}}$ elements of $S$ fits inside the knapsack $K$. It is easy
to verify that $\E[|S|] = n(\sqrt{G/n}) = \sqrt{nG}$ and that $\Pr[|S| \geq
\floor{\sqrt{nG}}] \geq 1/2$. In particular, 
\[
\E[|K|] \geq \E[|\{T: T \subseteq S, |T| \leq \floor{\sqrt{nG}}\}|] \geq \frac{1}{2} 2^{\floor{\sqrt{Gn}}} = e^{\Omega(\sqrt{n G})}, \text{ as needed.}
\]
 
The next lemma give us control on the distance between knapsack weights
induced by nearby $u$'s. The proof follows along the same lines as the upper
bound in~\Cref{lem:gauss-matrix}, using~\Cref{thm:log-main} to give the
requisite tailbounds. 
 
 \begin{lemma}
 \label{lem:logcon-matrix}
Let $n \geq 100 d$ and $W := (W_1,\dots,W_n) \in \R^{d \times n}$ be a matrix
whose columns are independent logconcave random vectors with identity
covariance. Then, 
 \[
 \Pr[\max_{\|u\|_2=1} \|u^\T (W-\E[W])\|_1 \geq 4n] \leq e^{-n/5} . 
 \]   
 \end{lemma}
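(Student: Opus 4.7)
The plan is to combine a moment-generating-function / Chernoff tail bound for a fixed direction $u \in \mathbb{S}^{d-1}$ with a standard $\epsilon$-net reduction on the sphere, exactly parallel to the upper-bound direction of \Cref{lem:gauss-matrix} but with \Cref{thm:log-main} replacing the sub-Gaussian concentration used there.

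For fixed $u \in \mathbb{S}^{d-1}$ I would introduce $Y_i := u^\T(W_i - \E[W_i])$, $i \in [n]$. By \Cref{thm:log-marg} each $Y_i$ is a one-dimensional logconcave random variable, and the identity-covariance hypothesis gives $\Var[Y_i] = u^\T \Cov(W_i) u = 1$. Invoking the first bullet of \Cref{thm:log-main} then produces the tail bound $\Pr[|Y_i| \geq s] \leq e^{1-s}$ for $s \geq 0$, and a routine integration of this tail against $\lambda e^{\lambda s}$ yields an MGF bound of the shape
$$\E[e^{\lambda |Y_i|}] \leq \frac{e^\lambda}{1-\lambda} \qquad (\lambda \in (0,1)).$$
Independence of the $W_i$'s together with a standard Chernoff bound at $\lambda = 1/2$ then gives, for any fixed $u$,
$$\Pr\!\left[\sum_{i=1}^n |Y_i| \geq 3n\right] \leq e^{-3n/2}\left(2\sqrt{e}\right)^n = (2/e)^n \leq e^{-0.307\,n}.$$

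Next I would observe that the map $u \mapsto f(u) := \|u^\T(W - \E[W])\|_1$ extends to a seminorm on $\R^d$, so for a $\tfrac{1}{4}$-net $\mathcal{N} \subseteq \mathbb{S}^{d-1}$ with $|\mathcal{N}| \leq 9^d$ the standard approximation argument yields $\max_{\|u\|_2=1} f(u) \leq \tfrac{4}{3} \max_{u \in \mathcal{N}} f(u)$. In particular, the event $\{\max f \geq 4n\}$ is contained in $\bigcup_{u \in \mathcal{N}} \{f(u) \geq 3n\}$. A union bound, using the hypothesis $n \geq 100 d$ to bound $9^d \leq e^{(\ln 9)\,n/100} \leq e^{0.022\,n}$, gives the claimed failure probability
$$\Pr\!\left[\max_{\|u\|_2=1} f(u) \geq 4n\right] \leq 9^d (2/e)^n \leq e^{-0.285\,n} \leq e^{-n/5}.$$

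The only genuinely computational ingredient is the MGF bound, which is a short integration once the logconcave tail estimate is in hand; the sphere covering and union bound are completely routine, so I do not anticipate any serious obstacle in carrying the plan out.
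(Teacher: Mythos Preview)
Your proposal is correct and follows essentially the same route as the paper: the same MGF bound $\E[e^{\lambda|Y_i|}]\le e^{\lambda}/(1-\lambda)$ derived from the logconcave tail in \Cref{thm:log-main}, the same Chernoff bound at threshold $3n$ (your choice $\lambda=1/2$ is exactly the optimizer the paper finds for $s=3$, giving $e^{-n(1-\log 2)}=(2/e)^n$), and the same $\tfrac14$-net reduction with $|N_{1/4}|\le 9^d$ followed by $n\ge 100d$ to absorb the net size.
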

 \begin{proof}
 Since the statement of the lemma is invariant to adding a fixed matrix to
 $W$, we assume without loss of generality that $\E[W] = 0$. For $i \in [n]$,
 $\|u\|_2 = 1$, by \Cref{thm:log-marg} we see that $u^\T W_i$ is logconcave.
 Furthermore, $\Var[u^\T W_i] = \E[(u^\T(W_i-\E[W_i]))^2] = \|u\|_2^2 = 1$.
 Therefore, for $\lambda \in [0,1)$, by \Cref{thm:log-main} part 1 we have
 that:
 \begin{align*}
 \E[e^{\lambda |u^\T W_i|}] 
   &= \int_0^\infty \Pr[e^{\lambda |u^\T W_i|} \geq t] dt 
    = \int_0^\infty \Pr[|u^\T W_i| \geq \log t/\lambda] dt \\
    &\leq \int_0^\infty \min \{1, e^{1-\log t/\lambda}\} dt 
     = \int_0^\infty \min \{1, e t^{-1/\lambda} \} dt \\
    &= e^{\lambda} + e \int_{e^{\lambda}}^\infty t^{-1/\lambda} dt 
     = e^{\lambda} + e \left[ \frac{1}{1-1/\lambda} t^{1-1/\lambda} \right]_{e^{\lambda}}^\infty \\
    &= e^{\lambda} + \frac{\lambda}{1-\lambda} e^{\lambda} = \frac{e^{\lambda}}{1-\lambda}.  
 \end{align*}
 Therefore, for $\|u\|_2=1$ and $s \geq 2$, we have that
 \begin{align}
 \Pr[\sum_{i=1}^n |u^\T W_i| \geq s n] &\leq \min_{\lambda \in [0,1)} e^{-\lambda sn} \E[e^{\sum_{i=1}^n \lambda |u^\T W_i|}] \nonumber \\
       &\leq \min_{\lambda \in [0,1)} e^{-\lambda sn} \frac{e^{\lambda n}}{(1-\lambda)^n} = e^{-n(s-2-\log(s-1))}, \label{eq:exp-tail}
 \end{align}
 where the minimum is attained at $\lambda = \frac{s-2}{s-1} \in [0,1)$.
 Letting $N_\epsilon$ be a minimal $\epsilon$-net of $\mathbb{S}^{d-1}$, similar to the computation for~\eqref{eq:upper-plus}, we get that
 \begin{equation}
 \max_{\|u\|_2=1} \|u^\T W\|_1 \leq \frac{1}{1-\epsilon} \max_{u \in N_\epsilon} \|u^\T W\|_1.\label{eq:eps-net-comp}
 \end{equation}
 Using the above for $\epsilon = \frac{1}{4}$, we deduce the desired probability bound
 \begin{align*}
 \Pr[\max_{\|u\|_2=1} \|u^\T W\|_1 \geq 4n]
 &\underbrace{\leq}_{\text{by \eqref{eq:eps-net-comp}}} \Pr[\max_{u \in N_{1/4}} \|u^\T W\|_1 \geq 3n] \\
 &\underbrace{\leq}_{\text{by \eqref{eq:exp-tail}}} |N_{1/4}| e^{-n(1-\log(2))}
 \leq 9^d e^{-n/4} \underbrace{\leq}_{n \geq 100 d} e^{-n/5}.  \quad \qed
 \end{align*}
 \end{proof}
 
 We now have all the ingredients needed to prove~\Cref{lem:knap-bnd}.
 
 \begin{proof}[of \Cref{lem:knap-bnd}] 
 Let $K(u,g) := \{x \in \{0,1\}^n: \sum_{i=1}^n x_i |u^\T W_i| \leq g\}$ for
 $u \in \mathbb{S}^{d-1}$, $g \geq 0$. Noting that $|K(u,G)| \leq |\{0,1\}|^n
 \leq 2^n$, we may assume that $G \leq n$ since otherwise the bound of $n^{O(m)}
 e^{2\sqrt{2nG}}$ follows trivially. 
 
 We begin by constructing a suitable net of knapsacks as described at the
 beginning of the section. Let $\mu_i := \E[W_i]$, $i \in [n]$, and let
 $\mathcal{H}$ in denote the hyperplane arrangement on $\R^d$ induced by the
 hyperplanes $u^\T \mu_i = \frac{j}{2n^2}$, $i \in [n]$, $j \in
 \{-10n^3,\dots,10n^3\}$. Noting that this arrangement has $l := n(20n^3+1)$
 hyperplanes, it is well-known that the number of $d$-dimensional cells of
 $\mathcal{H}$ is at most $\sum_{i=0}^d \binom{l}{i} \leq l^{d+1} \leq
 (21n^4)^{d+1}$ (see for example~\cite[Proposition 6.1.1]{Matousek13}).
 Letting $\epsilon = 1/(8n^2)$, for each $d$-cell $C$ of $\mathcal{H}$, let
 $N^C_\epsilon$ denote a minimal $\epsilon$-net of $\mathbb{S}^{d-1} \cap C$.
 Finally, we let $\mathcal{K} := \cup_{C} N_\epsilon^C$, where $C$ ranges over
 all $d$-cells of $\mathcal{H}$. The size of $\mathcal{K}$ is bounded by
 \[
 |\mathcal{K}| \leq (1+2/\epsilon)^d (21n^4)^{d+1} \leq (1+16n^2)^d
 (21n^4)^{d+1} \leq (357 n^6)^{d+1}.
 \]
 
 \begin{claim} 
 Let $E_1$ denote the event that for all $u \in
 \mathbb{S}^{d-1}$, there exists $u' \in \mathcal{K}$ such that $K(u,G)
 \subseteq K(u',G+1/n)$. Then, $E_1$ holds with probability at least $1-
 e^{-n/5}$.
 \end{claim}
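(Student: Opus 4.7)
The plan is to condition on the high-probability event from \Cref{lem:logcon-matrix}, namely $\mathcal{E} := \{\max_{\|v\|_2=1}\|v^\T(W-\E[W])\|_1 \leq 4n\}$, and then for each $u \in \mathbb{S}^{d-1}$ take the nearest point $u' \in \mathcal{K}$ lying in the same cell of $\mathcal{H}$; we show deterministically under $\mathcal{E}$ that $K(u,G) \subseteq K(u',G+1/n)$.

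Fix $u \in \mathbb{S}^{d-1}$. Almost surely the $\mu_i$ are generic enough that we may assume $u$ lies in some $d$-cell $C$ of $\mathcal{H}$ (boundary cases are handled by perturbation). By construction $\mathcal{K}$ contains an $\epsilon$-net of $\mathbb{S}^{d-1}\cap C$ with $\epsilon = 1/(8n^2)$, so pick $u' \in \mathcal{K} \cap C$ with $\|u-u'\|_2 \leq \epsilon$. Take any $x \in K(u,G)$. Splitting $(u-u')^\T W_i = (u-u')^\T(W_i-\mu_i) + (u-u')^\T\mu_i$ and using the triangle inequality,
\begin{equation*}
\sum_{i=1}^n x_i|u'^\T W_i| \leq \sum_{i=1}^n x_i|u^\T W_i| + \sum_{i=1}^n x_i|(u-u')^\T(W_i-\mu_i)| + \sum_{i=1}^n x_i|(u-u')^\T \mu_i|.
\end{equation*}
The first term is $\leq G$, and under $\mathcal{E}$ the second is at most $\|(u-u')^\T(W-\E[W])\|_1 \leq 4n\|u-u'\|_2 \leq 4n\epsilon = 1/(2n)$.

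The main case-analysis, and the only mildly delicate step, is the third term, which I will bound via the hyperplane arrangement. For each $i \in [n]$, because $u,u'$ lie in the same cell $C$ of $\mathcal{H}$, the coordinate $u^\T \mu_i$ and $u'^\T \mu_i$ lie in the same connected component of $\R \setminus \{j/(2n^2) : j \in \{-10n^3,\dots,10n^3\}\}$. If that component is a bounded strip, then $|(u-u')^\T \mu_i| \leq 1/(2n^2)$. Otherwise $|u^\T \mu_i| > 5n$ and (using $\mathcal{E}$, which entails $|u^\T(W_i-\mu_i)| \leq 4n$ pointwise) we get $|u^\T W_i| \geq 5n - 4n = n \geq G$, forcing $x_i = 0$ from $x \in K(u,G)$ (recall the w.l.o.g.\ reduction $G \leq n$). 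Consequently the third term is at most $n \cdot 1/(2n^2) = 1/(2n)$, since every surviving index contributes at most $1/(2n^2)$ and there are at most $n$ of them.

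Combining, $\sum_i x_i |u'^\T W_i| \leq G + 1/(2n) + 1/(2n) = G + 1/n$, so $x \in K(u',G+1/n)$. Hence $E_1$ holds whenever $\mathcal{E}$ holds, i.e.\ with probability at least $1 - e^{-n/5}$ by \Cref{lem:logcon-matrix}. The main conceptual obstacle is ensuring that the ``large mean'' case (Case A) is absorbed without paying for it, which is exactly why the arrangement $\mathcal{H}$ is chosen to extend to $\pm 5n$: this leaves a $1$-unit buffer against the $4n$ tail bound from $\mathcal{E}$, large enough to exceed $G \leq n$. \qed
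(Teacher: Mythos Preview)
Your proof is correct and follows essentially the same approach as the paper: condition on the event from \Cref{lem:logcon-matrix}, pick $u'$ from the net inside the same cell of $\mathcal{H}$, split indices according to whether $|u^\T\mu_i|$ falls in a bounded strip or exceeds $5n$, and show that the latter indices must have $x_i=0$ while the former contribute at most $1/(2n^2)$ each. One minor slip: the $\mu_i=\E[W_i]$ are deterministic, so the phrase ``almost surely the $\mu_i$ are generic enough'' is not the right justification for $u$ lying in a $d$-cell; the paper simply takes a $d$-cell containing $u$ (with the usual closure/boundary convention), and your perturbation remark is the correct fix.
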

 \begin{proof}
 Let $E'_1$ denote the event that $\|u^\T(W-\E[W])\|_1 < 4n$, for all $\|u\|_2
 \in \mathbb{S}^{d-1}$. By~\Cref{lem:logcon-matrix}, we see that $E_1'$ holds
 with probability at least $1-e^{-n/5}$. To prove the claim, we condition
 on $E'_1$ and show that $E_1$ holds. 
 
 Take $u \in \mathbb{S}^{d-1}$. Let $C$ denote a $d$-cell of $\mathcal{H}$
 containing $u$, and let $u' \in \mathcal{K}$ denote the closest point in
 $N^C_\epsilon \subseteq C \cap \mathbb{S}^{d-1}$ to $u$. 
 
 Let $B \subseteq [n]$ denote the (possibly empty) subset of indices such that
 either $\mu_i^\T v \leq -5n$ or $\mu_i^\T v \geq 5n$ is valid for all $v \in C$. Let $A
 = [n] \setminus B$. Since $5n = \frac{10 n^3}{2n^2}$, for all $i \in A$,
 there exists $j_i \in \{-10n^3,\dots,10n^3-1\}$ such that $\frac{j_i}{2n^2}
 \leq \mu_i^\T v \leq \frac{j_i+1}{2n^2}$ is valid for all $v \in C$.
 In particular this implies that for all $i\in A$, $u,u' \in C$, we have \begin{equation}
 |(u-u')^\T \mu_i |\leq \frac{1}{2n^2}. \label{eq:cell-bnd}
 \end{equation}
 
 We first show that if $x \in K(u,G)$, then $x_i = 0$, $\forall i \in B$. For
 the sake of contradiction, assume $x \in K(u,G)$ and $x_i = 1$ for some $i \in B$.
 Then, since $G \leq n$, we have that
 \begin{align*}
 \sum_{j=1}^n x_j|u^\T W_j| &\geq |u^\T W_i| \geq |u^\T \mu_i| - |u^\T(W_i-\mu_i)| \\ &\underbrace{\geq}_{i \in B} 5n - \|u^\T(W-\E[W])\|_1 \underbrace{>}_{\text{ by } E_1'} 5n - 4n \geq G,   
 \end{align*}
 a clear contradiction to the assumption that $x \in K(u,G)$. 
 
 Take $x \in K(u,G)$. We now show that $x \in K(u',G+1/n)$ as follows, 
 \begin{align*}
 G &\geq \sum_{i=1}^n x_i |u^\T W_i| \geq \sum_{i=1}^n x_i(|(u')^\T W_i| - |(u-u')^\T (W-\mu_i)| - |(u-u')^\T \mu_i|) \\
   &\geq \left(\sum_{i=1}^n x_i|(u')^\T W_i|\right) - \|(u-u')^\T(W-\E[W])\|_1 - \sum_{i \in A} 1/(2n^2) \\ & \quad \left(\text{ since } x_i = 0, \forall i \in B \text{ and } \eqref{eq:cell-bnd} \right) \\
   &\geq \left(\sum_{i=1}^n x_i|(u')^\T W_i|\right) - 4n\epsilon - |A|/(2n^2) \geq \left(\sum_{i=1}^n x_i|(u')^\T W_i|\right) - 1/n. \quad \qed 
 \end{align*} 
 \end{proof} 
 
 For $u \in \mathbb{S}^{d-1}$, by \Cref{thm:log-marg} we know that $u^\T W_i$, for $i \in [n]$,
 are independent and logconcave. Furthermore, $\Var[u^\T W_i] =
 \E[u^\T(W_i-\mu_i)^2] = \|u\|_2^2=1$, $\forall i \in [n]$. Therefore,
 by~\Cref{thm:log-main} part 2, the densities of $u^\T W_i$, $i \in [n]$, have
 maximum density at most $1$. Applying~\Cref{lem:one-knapsack} with $\omega_i
 = u^\T W_i$, $i \in [n]$ and $g = G+1/n$, we get that $\E_W[|K(u,G+1/n)|]
 \leq e^{2\sqrt{2n(G+1/n)}} \leq e^{2\sqrt{2nG} + 4}$.
 
 Let $E_2$ denote the event $\forall u \in \mathcal{K}$, $|K(u,G+1/n)| \leq
 |\mathcal{K}| e^{2\sqrt{2nG}+4}/\delta$ for $\delta \in (0,1)$. By Markov's
 inequality, for $u \in \mathcal{K}$ we have that 
 \[
 \Pr[|K(u,G+1/n)| \geq |\mathcal{K}| e^{2\sqrt{2nG}+4}/\delta] \leq \delta/(|\mathcal{K}|).
 \]
 Therefore, by the union bound, $E_2$ occurs with probability at least
 $1-\delta$.
 
 By the above claim, noting that
 \[
 |\mathcal{K}| e^{2\sqrt{2nG}+4} \leq (357 n^6)^{d+1} e^{2\sqrt{2nG}+4}
 \leq 60(357 n^6)^{d+1} e^{2\sqrt{2nG}}, 
 \]
 we see that 
 \begin{align*}
 \Pr[\max_{u \in \mathbb{S}^{d-1}} |K(u,G)| \leq 60(357 n^6)^{d+1} e^{2\sqrt{2nG}}/\delta] &\geq 1-\Pr[\neg E_1] - \Pr[\neg E_2]\\& \geq 1-\delta-e^{-n/5}, 
 \end{align*}
 as needed.  \qed
 \end{proof}

\bibliographystyle{splncs04}
\bibliography{bibliography}

\end{document}